\newlength{\fixboxwidth}
\newcommand{\re}{\mathbb{R}}\newcommand{\N}{\mathbb{N}}
\newcommand{\zz}{\mathbb{Z}}
\newcommand{\Z}{{\zz}^d}
\newcommand{\R}{{\re}^d}
\newcommand{\cs}{{\mathcal S}}
\newcommand{\cl}{{\mathcal L}}
\newcommand{\ce}{{\mathcal E}}
\newcommand{\cf}{{\mathcal F}}
\newcommand{\cfi}{{\cf}^{-1}}
\newcommand{\supp}{{\rm supp \, }}
\newcommand{\mix}{{\rm mix}}
\newcommand{\bproof}{\begin{proof}}
\newcommand{\eproof}{\end{proof}}
\newcommand{\be}{\begin{equation}}
\newcommand{\ee}{\end{equation}}
\newcommand{\beq}{\begin{eqnarray}}
\newcommand{\beqq}{\begin{eqnarray*}}
\newcommand{\eeq}{\end{eqnarray}}
\newcommand{\eeqq}{\end{eqnarray*}}
\numberwithin{equation}{section}
\newtheorem{theorem}{Theorem}[section]
\newtheorem{definition}[theorem]{Definition}
\newtheorem{corollary}[theorem]{Corollary}
\newtheorem{lemma}[theorem]{Lemma}
\newtheorem{proposition}[theorem]{Proposition}
\newtheorem{remark}[theorem]{Remark}
\begin{document}

%%%%%%%%%%%%%%%%%%%%%%%%%%%%%%%%%%%%%%%%%%%%%%%%%%%%%%%%%%%%%%%%%%%%%%%%%%%%%%%%%%%%%
%%%%%%%%%%%%%%%%%%%%%%%%%%%%%%%%%%%%%%%%%%%%%%%%%%%%%%%%%%%%%%%%%%%%%%%%%%%%%%%%%%%%%

\title{Gelfand Numbers of Embeddings of Mixed Besov Spaces}

\author[a,b]{Van Kien Nguyen\thanks{E-mail: kien.nguyen@uni-jena.de \& kiennv@utc.edu.vn}}
%email{kien.nguyen@uni-jena.de \quad \& \quad  winfried.sickel@uni-jena.de}
\affil[a]{Friedrich-Schiller-University Jena, Ernst-Abbe-Platz 2, 07737 Jena, Germany}
\affil[b]{University of Transport and Communications, Dong Da, Hanoi, Vietnam}
%%%%%%%%%%%%%%%%%%%%%%%%%%%%%%%%%%%%%%%%%%%%%%%%%%%%%%%%%%%%%%%%%%%%%%%%%%%%%%%%%%%%%
%%%%%%%%%%%%%%%%%%%%%%%%%%%%%%%%%%%%%%%%%%%%%%%%%%%%%%%%%%%%%%%%%%%%%%%%%%%%%%%%%%%%%

\date{\today}

\maketitle

\begin{abstract}
Gelfand numbers represent a  measure for the information complexity which is given by the number of information needed to approximate functions in a subset of a normed space with an error less than $\varepsilon$. More precisely, Gelfand numbers coincide up to  the factor 2 with the minimal error $ e^{\rm wor}(n,\Lambda^{\rm all})$ which describes the error of the optimal (non-linear) algorithm that is based on $n$ arbitrary linear functionals. This explains the  crucial role of Gelfand numbers in the study of approximation problems. Let $S^t_{p_1,p_1}B((0,1)^d)$ be the Besov spaces with dominating mixed smoothness on $(0,1)^d$. In this paper we consider the problem ${\rm App}: S^t_{p_1,p_1}B((0,1)^d) \to L_{p_2}((0,1)^d)$ and investigate the asymptotic behaviour of Gelfand numbers of this embedding. We shall give the correct order of convergence of Gelfand numbers in almost all cases. In addition we shall compare these results with the known
behaviour of approximation numbers which coincide with $ e^{\rm wor-lin}(n,\Lambda^{\rm all})$ when we only allow linear algorithms. 
\end{abstract}

%&&&&&&&&&&&&&&&&&&&&&&&&&&&&&&&&&&&&&&&&&&&&&&&&&&
%&&&&&&&&&&&&&&&&&&&&&&&&&&&&&&&&&&&&&&&&&&&&&&&&&&

\section{Introduction}\label{intro}

Gelfand and approximation numbers play a crucial role in information-based complexity. Let us first recall some related notions, see \cite{NoWo-1,TrWaWo}. Let $\tilde{F}$ and $G$ be normed spaces of functions defined on the set $D_d\subset \R$.  We consider the linear operator
\be\label{prob}
{\rm App}: F \to G,\qquad\ \ {\rm App}(f)=f
\ee 
where $F$ is a subset of $\tilde{F}$, such as the unit ball of $\tilde{F}$. Our aim consists in computing an approximation
 of $f\in F$. Let $$N(f)=[L_1(f),...,L_n(f)]\in \re^n$$ be the information about $f\in F$ we can use.  Here $L_i\in \Lambda \subset \tilde{F}'$, a subset of the set of all linear, real-valued and continuous  functionals on $\tilde{F}$. We are interested in two different classes  $\Lambda$. First,  $\Lambda=\Lambda^{\rm all}=\tilde{F}'$. Second,  $\Lambda=\Lambda^{\rm std}$, the set of all linear functionals generated by function value, i.e., for some $x\in D_d$ we have 
\beqq
L(f)=f(x),\ \ \text{for all} \ f\in F.
\eeqq 
This type of information is called standard information. To approximate $f\in F$ we use algorithms of the form $A =\varphi \circ N $ where $\varphi: \re^n\to G$ is an arbitrary mapping. Then the worst case error of the algorithm $A$ is given by
\beqq
e^{\rm wor}(n,A)  =\sup_{f\in F}\| f-A(f)|G\| . 
\eeqq
The minimal error of the class $\Lambda$ is defined as
\beqq
e^{\rm wor}(n,\Lambda) =\inf_{A:L_i\in \Lambda, i=1,...,n} e^{\rm wor}(n,A).
\eeqq
In such a situation it is well-known that 
\be \label{wor-gel}
c_n(F,G)\leq e^{\rm wor}(n,\Lambda^{\rm all}) \leq 2 c_n(F,G)
\ee 
for all $n\geq 1$, see \cite[Section 5.4]{TrWaWo}. Here $c_n(F,G)$ is the Gelfand $n$-width of the set $F$ in $G$. The error $e^{\rm wor}(n,\Lambda)$ is inversely related to the information complexity $n^{\rm wor}(\varepsilon,\Lambda)$   which is given by
\beqq
n^{\rm wor}(\varepsilon,\Lambda) =\min\{ n: \text{  there exists }\ A\ \text{with}\  e^{\rm wor}(n,A) \leq \varepsilon\}.
\eeqq
The number $n^{\rm wor}(\varepsilon,\Lambda) $ shows that to solve the problem \eqref{prob} within an error of $\varepsilon>0$, we need $n$ information operations in the class $\Lambda$.  If we only allow linear algorithms $\varphi: \re^n \to G$ then we get  
\beqq
e^{\rm wor-lin}(n,\Lambda^{\rm all}) =\inf_{A:L_i\in \Lambda, i=1,...,n\atop \varphi\ \text{is linear}} e^{\rm wor}(n,A).
\eeqq
These are the approximation numbers of the embedding ${\rm App}: \tilde{F}\to G$, sometimes also called linear widths.
We wish to emphasize that linear algorithms are not always optimal, see, e.g., \cite[Section 4.2]{NoWo-1}. This explains the importance of Gelfand numbers in the study of the information complexity of the class $\Lambda^{\rm all}$.  

There is an increasing interest in information-based complexity and high-dimensional approximation  in the context of function spaces with dominating mixed smoothness. The reason for this is clear, function spaces with dominating mixed smoothness are much smaller than their isotropic counterpart (with the same smoothness). There is a realistic hope that one can approximate functions from these classes for larger dimension than in case of isotropic spaces. Let us mention that there exist a number of problems in finance and quantum chemistry modeled on function spaces with dominating mixed smoothness, see, e.g., \cite{Glas-04} and \cite{Yse-10}.

Let  $\Omega$ be the unit cube of $\R$, i.e., $\Omega=(0,1)^d$. The purpose of the present paper is to study the order of convergence of Gelfand numbers of the embedding $${\rm App}: S^t_{p_1,p_1}B(\Omega) \to L_{p_2}(\Omega),$$ $1<p_1,p_2<\infty$ and $t>\max(\frac{1}{p_1}-\frac{1}{p_2},0)$. Here  $S^t_{p_1,p_1}B(\Omega)$  denotes  the Besov space with dominating mixed smoothness on $\Omega$. A particular interesting special case is given by $p_1=2$. Then $ S^t_{2,2}B(\Omega)$ coincides with $H_{\mix}^t(\Omega)$, a space, which attracts a lot of attention recently in numerical analysis. Note that if $t=m\in \N$, then these spaces can be simply described as the collection of all $f\in L_2(\Omega)$ such that all distributional derivatives $D^{\alpha}f$ with $\alpha=(\alpha_1,...,\alpha_d)\in \N_0^d$ and $\max_{i=1,...,d} |\alpha_i|\leq m$ belong to $L_2(\Omega)$. This paper is a continuation of \cite{KiSi}-\cite{Ki16}.
 
The paper is organized as follows. Our main results are discussed in Section \ref{sec-main}. 
Section \ref{besov} is devoted to  the function spaces under consideration.
In Section \ref{proof} we prove the results for sequence spaces 
associated to mixed Besov spaces and transfer them to the level of function spaces.

\noindent
{\bf Notation:} As usual, $\N$ denotes the natural numbers, $\N_0 := \N \cup \{0\}$,
$\zz$ the integers and
$\re$ the real numbers. For a real number $a$ we put $a_+ := \max(a,0)$.
By $[a]$ we denote the integer part of $a$.
If $\bar{j}=(j_1,... , j_d) \in \N_0^d$, then we put
$
|\bar{j}|_1 := j_1 + \ldots \, + j_d\, .
$
If $X$ and $Y$ are two Banach spaces, then the symbol $X\hookrightarrow Y$ indicates that the embedding is continuous. $X'$ denotes the dual space of $X$.
The meaning of $A \lesssim B$ is given by: there exists a constant $c>0$ such that
 $A \le c \,B$. Similarly $\gtrsim$ is defined. The symbol 
$A \asymp B$ will be used as an abbreviation of
$A \lesssim B \lesssim A$.
For a finite set $\nabla$ the symbol $|\nabla|$ 
denotes the cardinality of this set.
Finally, the symbols ${id}, id^*$ and $\rm App$ will be used for identity operators, ${id}, id^*$ mainly connection with sequence spaces and App with function spaces.
The symbol $id_{p_1,p_2}^m$ refers to the identity 
\be\label{idlp}
id_{p_1,p_2}^m:~ \ell_{p_1}^m \to \ell_{p_2}^m\, .
\ee
\section{The main results}\label{sec-main}

 Let $X$, $Y$ be Banach spaces and $T$ be a continuous linear operator from $X$ to $Y$, i.e., $T \in \mathcal L(X,Y)$. The $n$th Gelfand number of $T $ is defined as 
$$ c_n (T) := \inf\Big\{\|\, T J_M^X\, \|: \ {\rm codim\,}(M)< n\Big\},$$
where $J_M^X:M\to X$ refers to the canonical injection of $M$ into $X$. Let $A$ be a subset of $Y$. The Gelfand $n$-width of the set $A$ in $Y$ is given by
\beqq
c_n(A,Y):=\inf_{L_n}\sup_{x\in A \cap L_n}\|x|Y\|
\eeqq
where  the infimum  is  taken over all  subspaces $L_n$ 
of codimension $n$ in $Y$. 
If $T$ is a compact operator then the $(n+1)$th Gelfand number of the operator $T\in \mathcal{L}(X,Y)$ and the Gelfand $n$-width of $T(B_X)$ in $Y$ coincide, see \cite{EL}. Here $B_X$ is the closed unit ball of $X$.

Related to Gelfand numbers are the Kolmogorov, approximation and Weyl numbers. The $n$th Kolmogorov number of the linear operator $T \in \mathcal L(X,Y)$ is defined as 
\beqq
d_n(T)= \inf_{L_{n-1}}\sup_{\|x|X\|\leq 1}\inf_{y\in L_{n-1}}\|Tx-y|Y\|.
\eeqq
Here the outer infimum is taken over all linear subspaces  $L_{n-1}$ of dimension ($n-1$)  in $Y$. The $n$th approximation number of $T$ is defined as
 $$ a_n(T):=\inf\{\|T-A\|: \ A\in \mathcal L(X,Y),\ \ \text{rank} (A)<n\}\, .  $$ 
And the $n$th Weyl number of $T$ is given by
 \beqq
 \begin{split} 
  x_n(T):&=\sup\{a_n(TA):\ A\in \mathcal L(\ell_2,X),\ \|A\|\leq 1\}\,.
  \end{split}
  \eeqq
The inequality
  \be \label{xca}
  x_n(T)\leq c_n(T) \leq a_n(T),
  \ee 
valid for every bounded linear operator $T$, see \cite[Theorem 2.10.1]{Pi-87}, and the relation
\be \label{dual}
c_n(T)=d_n(T')
\ee
if $T$ is a compact operator, see \cite[Theorem 11.7.7]{Pi-80}, are useful tools when dealing with Gelfand numbers. Here $T'$ denotes the dual operator of $T$.

Gelfand numbers, as well as Kolmogorov, approximation and Weyl numbers belong to the class of  $s$-numbers. Here we use the definition of $s$-numbers in \cite[Section 2.2]{Pi-87}. Let $X,Y,X_0,Y_0$ be Banach spaces.
  An $s$-function is a map $s$ assigning to every operator $T\in \mathcal L(X,Y)$ a scalar sequence $\{s_n(T)\}_{n\in \N}$ such that the following conditions are satisfied:
 \begin{enumerate}
 \item[(a)] $\|T\|=s_1(T)\geq s_2(T)\geq...\geq 0 $;
 \item[(b)]$ s_{n+m-1}(S+T)\leq s_n(S)+ s_m(T) $ for all $S\in \mathcal L(X,Y)$ and $m,n\in \N\, $,
 \item[(c)] $s_n(BTA)\leq \|B\| \, \cdot \, s_n(T) \, \cdot \, \|A\|$ for all $A\in \mathcal L(X_0,X)$, $B\in \mathcal L(Y,Y_0)$;
 \item[(d)] $s_n(T)=0$ if $\text{rank}(T)<n$ for all $n\in \N$; 
 \item[(e)] $s_n(id: \ell_2^n\to \ell_2^n)=1$ for all $n\in \N$.
 \end{enumerate}
 \begin{remark}\label{rem-s}\rm (i) In the literature there is some ambiguity concerning the notion of $s$-numbers. There is a different definition of $s$-numbers in which one replaces axiom (b) by a weaker condition, i.e., $ s_{n}(S+T)\leq s_n(S)+ \|T\| $ for all $S,T\in \mathcal L(X,Y)$ and $m,n\in \N$, see \cite[Section 11.1]{Pi-80}. For more details about $s$-numbers and $n$-widths we refer to the monographs of Pietsch \cite[Chapter 11]{Pi-80}, \cite[Chapter 2]{Pi-87} and Pinkus \cite[Chapter 2]{Pin85}.\\
(ii) In the recent comprehensive survey \cite{DTU-16} of Dinh D\~ung, Temlyakov and Ullrich the reader can find the state of the art concerning the behaviour of $s$-numbers for embeddings of function spaces with dominating mixed smoothness into Lebesgue spaces.
 \end{remark}

Our main result reads as follows.
\begin{theorem}\label{main} Let $1< p_1,p_2< \infty $ and $ t>(\frac{1}{p_1}-\frac{1}{p_2})_+$. Then we have
\[
c_n({\rm App}:\ S^t_{p_1,p_1}B(\Omega) \to L_{p_2}(\Omega))\asymp n^{-\alpha}(\log n)^{(d-1)\beta}, \qquad n \ge2\, , 
 \]
where
\begin{enumerate}
\item 
{\makebox[5.5cm][l]{$\alpha=t$, $\beta=t-\frac{1}{p_1}+\frac{1}{2}$} if  \ $ \max(2,p_2)\leq p_1 $ or \big($  p_1, p_2<2$, $t>\frac{1}{2}$\big)};
\item {\makebox[5.5cm][l]{$\alpha=\beta=t-\frac{1}{p_1}+\frac{1}{p_2}$}  if  \ $2\leq p_1\leq p_2 $};
\item 
{\makebox[5.5cm][l]{$\alpha= t-\frac{1}{2}+\frac{1}{p_2}$, $\beta=t-\frac{1}{p_1}+\frac{1}{p_2}$} if   \ 
$p_1< 2\leq p_2 $,\ $t>1-\frac{1}{p_2}$};
\item {\makebox[5.5cm][l]{$\alpha=\frac{p_1'}{2}(t-\frac{1}{p_1}+\frac{1}{p_2})$, $\beta=\frac{2\alpha}
{p_1'}$} if\ \ \big($  p_1\leq 2< p_2    $, $t<1-\frac{1}{p_2}$\big) or \big($     p_1  <p_2\leq 2 $,\ $t< \frac{1/p_1-1/p_2}{2/p_1-1}$\big)}.
\end{enumerate}
\end{theorem}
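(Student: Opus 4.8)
\medskip
\noindent\textbf{Plan of proof.} The strategy is the one that has become standard for $s$-numbers of mixed-smoothness embeddings (see the survey \cite{DTU-16} and the author's papers \cite{KiSi}--\cite{Ki16}): reduce to sequence spaces, and then to finite-dimensional building blocks $id^m_{p_1,p_2}$ as in \eqref{idlp}. Via the wavelet isomorphisms that will be recalled in Section~\ref{besov}, $S^t_{p_1,p_1}B(\Omega)$ is isomorphic to a weighted sequence space $s^t_{p_1,p_1}b$ and, since $1<p_2<\infty$, $L_{p_2}(\Omega)$ coincides with $F^0_{p_2,2}(\Omega)$ and is isomorphic to $s^0_{p_2,2}f$; note that the source is just a weighted $\ell_{p_1}$ (its fine index equals $p_1$), whereas the target $f$-space aggregates the dyadic blocks through an inner $\ell_2$. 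By property~(c) of $s$-numbers the Gelfand numbers of ${\rm App}$ and of the induced identity $id\colon s^t_{p_1,p_1}b\to s^0_{p_2,2}f$ agree up to constants, so it suffices to estimate $c_n(id)$.

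Next I would organise $\N_0^d$ according to the value of $|\bar{j}|_1$. The block of indices with $|\bar{j}|_1=j$ carries $M_j\asymp 2^{j}j^{d-1}$ coefficients; after renormalisation the restriction of $id$ to this block is a finite-dimensional operator which, up to constants and up to polynomial-in-$j$ corrections accounting for the mismatch between the $\ell_2$-aggregation of the $f$-space and the $\ell_{p_1}$/$\ell_{p_2}$-structure over the $j^{d-1}$ sub-blocks with $|\bar{j}|_1=j$, behaves like a weighted identity $\lambda_j\,id^{M_j}_{p_1,p_2}$ with weights decaying geometrically in $j$ (the geometric rate and the correction exponents depend on the position of $p_1,p_2$ relative to $2$, and it is precisely these polynomial corrections that make the logarithmic exponent $\beta$ differ from the polynomial exponent $\alpha$). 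The geometric decay, which is in force because $t>(\frac{1}{p_1}-\frac{1}{p_2})_+$ lies above the embedding threshold, makes the relevant series converge. At this stage one feeds in the sharp two-sided asymptotics of $c_n(id^m_{p_1,p_2})$ due to Kashin, Gluskin, Garnaev--Gluskin, Carl and others (together with \eqref{dual} for the companion Kolmogorov numbers and \eqref{xca} for the comparison with $a_n$ and $x_n$); the four cases of the theorem mirror the regimes of these finite-dimensional asymptotics.

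For the upper bound I would fix a cut-off $J$ with $2^{J}J^{d-1}\asymp n$, bound the tail $\sum_{j>J}$ by the operator norm of the corresponding truncated identity (comparable to the single level $J$ by geometric decay), and on the head $j\le J$ distribute the budget $n=\sum_{j\le J}n_j$ over the blocks, applying the finite-dimensional estimates on each block and combining them via the additivity axiom~(b) (a direct-sum estimate for Gelfand numbers into $\ell_2$-sums); the choices of $J$ and of the $n_j$ are then optimised, which amounts in some regimes to covering the low levels essentially completely and in others to a more delicate distribution that produces the less standard exponents. For the lower bound there are two mechanisms: in the simpler regimes one restricts $id$ to the single block with $|\bar{j}|_1\asymp\log n$, which already yields the correct $n^{-\alpha}$ together with part of the logarithmic factor; to capture the full power $(\log n)^{(d-1)\beta}$ one instead uses a test element spread over all levels $j\lesssim\log n$, i.e.\ a lower bound for the weighted direct sum $\bigoplus_{j\le J}\lambda_j\,id^{M_j}_{p_1,p_2}$ (equivalently, one passes to a single identity $\ell_{p_1}^{N}\to\ell_{p_2}^{N}$ with $N\asymp n(\log n)^{d-1}$ and invokes the finite-dimensional lower bound for this $n$ relative to $N$), or else one uses $x_n(id)\le c_n(id)$ from \eqref{xca} together with the known lower bounds for Weyl numbers of such embeddings.

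The step I expect to be the main obstacle is matching the two bounds in the small-smoothness case~(iv) (and in the $p_1,p_2<2$ part of~(i)): there the building block $c_n(id^m_{p_1,p_2})$ lies in the Gluskin/Garnaev--Gluskin regime with $n$ small compared to the block dimension, the exponent $\alpha=\frac{p_1'}{2}(t-\frac{1}{p_1}+\frac{1}{p_2})$ emerges only after a non-trivial joint optimisation over the cut-off $J$ and over the distribution of $n$, and the direct-sum estimate has to be applied in a form that is sharp in both the polynomial and the logarithmic order. Arranging the lower bound so that it meets the upper bound there, with exactly the right power of $\log n$, is the delicate point, and is presumably the reason why only almost all cases can be settled, the remaining ones forming a thin range of small $t$ when $p_1,p_2<2$.
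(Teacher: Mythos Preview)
Your plan is correct and follows essentially the same route as the paper: wavelet reduction to $id^*\colon s^{t,\Omega}_{p_1,p_1}f \to s^{0,\Omega}_{p_2,2}f$, decomposition into hyperbolic layers $|\bar{\nu}|_1=\mu$, Gluskin's estimates for $c_n(id^m_{p_1,p_2})$, and the comparisons $x_n\le c_n\le a_n$ from \eqref{xca} as shortcuts. The only organisational differences are that the paper uses a three-segment split $[0,J]\cup(J,L]\cup(L,\infty)$ (full rank on the first so that $c_{n_\mu}=0$, a specific distribution $n_\mu$ on the middle, norm bound on the tail) rather than your two-segment head/tail scheme, and that it leans on the shortcuts more heavily than your outline suggests---upper bounds in (i) and (ii) come directly from $c_n\le a_n$ and Theorem~\ref{known}, lower bounds in (ii), (iii) and the $p_1,p_2<2$ part of (i) come from $x_n\le c_n$ and \cite{KiSi}---so that the genuine block optimisation (with a second cut-off $L\asymp \tfrac{p_1'}{2}J$ in case (iv)) is carried out only where neither shortcut suffices.
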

\begin{remark}\rm 
(i) Let $1<p_1,p_2<\infty$. Then ${\rm App}:\ S^t_{p_1,p_1}B(\Omega) \to L_{p_2}(\Omega)$ is compact if and only if $t> (\frac{1}{p_1}-\frac{1}{p_2})_+$, see \cite[Theorem 3.17]{Vy-06}. Hence the restriction $t>(\frac{1}{p_1}-\frac{1}{p_2})_+$ is natural. This condition guarantees that Gelfand numbers converge to $0$ as 
$n$ tends to infinity.\\
(ii) Gelfand numbers of embeddings $B^t_{p_1,p_1}(\Omega)\to L_{p_2}(\Omega)$ have been investigated by Vybiral \cite{Vy-08}. Here $B^t_{p_1,p_1}(\Omega)$ denotes isotropic Besov space on $\Omega$. There are a few more references where Gelfand numbers of such embeddings in slightly modified situations have been considered, see \cite{ZF-12,ZF-13,ZD-14,ZFH-14}.
\end{remark}
The picture in Theorem \ref{main} is nearly complete except one case plus some limiting situations. The only case which has been left open consists in $1<p_1,p_2<2$ and $ \max( 0, \frac{1/p_1-1/p_2}{2/p_1-1})<t<\frac{1}{2}$.
\begin{proposition}\label{conj} Let $1<p_1,p_2<2$ and $ \max( 0, \frac{1/p_1-1/p_2}{2/p_1-1})<t<\frac{1}{2}$. Then we have
\[
c_n({\rm App}:\ S^t_{p_1,p_1}B(\Omega) \to L_{p_2}(\Omega))\lesssim n^{-t}(\log n)^{(d-1)(2t-\frac{2t}{p_1})}, \qquad n \ge2\, .
 \]
\end{proposition}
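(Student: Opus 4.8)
\medskip
\noindent\emph{Outline of a proof.}
The scheme is the one underlying Theorem~\ref{main}: discretise, realise the embedding as a block--diagonal operator built from the finite--dimensional identities $id_{p_1,p_2}^{m}$, split it, and optimise. By the wavelet isomorphisms for spaces of dominating mixed smoothness (Section~\ref{besov}), $S^t_{p_1,p_1}B(\Omega)$ is isomorphic to a sequence space $s^t_{p_1,p_1}b$. Since $p_2<2$, the Littlewood--Paley inequalities give $S^0_{p_2,p_2}B(\Omega)\hookrightarrow L_{p_2}(\Omega)$, and $S^t_{p_1,p_1}B(\Omega)\hookrightarrow S^0_{p_2,p_2}B(\Omega)$ because $t>\tfrac1{p_1}-\tfrac1{p_2}$; hence, by the multiplicativity of $s$-numbers,
\[
c_n\bigl({\rm App}:\ S^t_{p_1,p_1}B(\Omega)\to L_{p_2}(\Omega)\bigr)\ \lesssim\ c_n\bigl(id:\ s^t_{p_1,p_1}b\to s^0_{p_2,p_2}b\bigr).
\]
Ordering the wavelet indices according to their level $\ell=|\bar j|_1$, the right-hand identity is (up to equivalence of norms) the block--diagonal operator $D=\bigoplus_{\ell\ge0}2^{-\delta\ell}\,id_{p_1,p_2}^{m_\ell}$, where $\delta:=t-\tfrac1{p_1}+\tfrac1{p_2}>0$ and $m_\ell\asymp 2^{\ell}\ell^{\,d-1}$, so it suffices to prove $c_n(D)\lesssim n^{-t}(\log n)^{(d-1)(2t-2t/p_1)}$.

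For any integers $k_\ell\ge1$ that equal $1$ for all large $\ell$, the subadditivity and multiplicativity of $s$-numbers give $c_{1+\sum_{\ell}(k_\ell-1)}(D)\le\sum_{\ell\ge0}2^{-\delta\ell}\,c_{k_\ell}\bigl(id_{p_1,p_2}^{m_\ell}\bigr)$. Into this one feeds the available upper estimates for $c_k(id_{p_1,p_2}^{m})$ in the range $1<p_1<p_2<2$. Complex interpolation of the target, $\ell_{p_2}^m=[\ell_2^m,\ell_{p_1}^m]_{\theta}$, together with $\|id_{p_1,p_1}^m\|=1$ and the Garnaev--Gluskin estimate for $c_k(id_{p_1,2}^m)$, yields
\[
c_k\bigl(id_{p_1,p_2}^{m}\bigr)\ \lesssim\ \min\Bigl\{1,\ \bigl(k^{-1}(1+\log(m/k))\bigr)^{1/p_1-1/p_2}\Bigr\},\qquad c_k\bigl(id_{p_1,p_2}^{m}\bigr)=0\ \text{ for }k>m,
\]
to be supplemented, in sub-ranges of the parameters, by the sharper bounds that are known there. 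I emphasise that the precise two-sided order of $c_k(id_{p_1,p_2}^m)$ is not known here --- at $k=m$ one only has $m^{1/2-1/p_1}\lesssim c_m(id_{p_1,p_2}^m)\lesssim m^{1/p_2-1/p_1}$ --- so the estimate above, which drives the final bound, need not be optimal.

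One then chooses the $k_\ell$: the low levels are annihilated, $k_\ell=m_\ell$ for $\ell$ up to some $\ell_1$ with $\sum_{\ell\le\ell_1}m_\ell\asymp n$ (so $\ell_1\asymp\log n$, $m_{\ell_1}\asymp n$, $2^{\ell_1}\asymp n/(\log n)^{d-1}$), the remaining budget is spread in a geometrically decaying way over the intermediate levels, and a cut-off $L$ of order $\log n$ is chosen so that the tail $\sum_{\ell>L}2^{-\delta\ell}\asymp 2^{-\delta L}$ is harmless. Inserting the bound of the previous step and summing the resulting series --- the geometric factor $2^{-\delta\ell}$ making the sum collapse to its contribution at the decisive scale $\ell\asymp\log n$, while the factors $\ell^{\,d-1}$ carried by $m_\ell$ generate the power of $\log n$ --- gives the asserted estimate. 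The numerical bookkeeping, including the fine tuning of the allocation that pins down the exponent $(d-1)(2t-2t/p_1)$ and the distinction between the regimes $t\gtrless\tfrac1{p_1}-\tfrac12$ (which decide which level dominates the sum), is the technical heart of the argument; it parallels the corresponding passages in the proof of Theorem~\ref{main}.

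The genuine difficulty, and the reason Proposition~\ref{conj} is an upper estimate only, is that the finite--dimensional Gelfand numbers $c_k(id_{p_1,p_2}^m)$ for $1<p_1<p_2<2$ are not known in sharp form: the bound used above is presumably not tight, and no matching lower bound for $c_n(D)$ --- hence none for the original embedding --- is available in this parameter range. As a consistency check one verifies that the exponents in Proposition~\ref{conj} reduce to those of Theorem~\ref{main}(i) at $t=\tfrac12$, and that at the lower endpoint $t=\frac{1/p_1-1/p_2}{2/p_1-1}$ they coincide with the exponents of Theorem~\ref{main}(iv) (there $\tfrac{p_1'}{2}\bigl(t-\tfrac1{p_1}+\tfrac1{p_2}\bigr)=t$), so that the proposition continuously bridges the two adjacent regions covered by the theorem.
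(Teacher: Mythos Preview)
Your outline has two genuine problems.

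\medskip
\textbf{A factual error.} The finite-dimensional Gelfand numbers $c_k(id_{p_1,p_2}^m)$ \emph{are} known in sharp two-sided form for $1<p_1<p_2\le 2$: this is Gluskin's result, recorded in the paper as \eqref{wth2},
\[
c_k(id_{p_1,p_2}^m)\asymp \bigl(\min\{1,\,m^{1-1/p_1}k^{-1/2}\}\bigr)^{\frac{1/p_1-1/p_2}{1/p_1-1/2}}.
\]
No Garnaev--Gluskin logarithm appears once $p_1>1$; your bound with a $\log(m/k)$ factor is not the right input. Consequently your explanation for why Proposition~\ref{conj} is only an upper bound is wrong: the obstruction is not missing knowledge of $c_k(id_{p_1,p_2}^m)$, but rather the polynomial-in-$\mu$ gap in the building-block estimate \eqref{case1}, namely the factor $\mu^{(d-1)(1/2-1/p_2)}$ separating the upper and lower bounds for $c_n(id_\mu^*)$ in terms of $c_n(id_{p_1,p_2}^{D_\mu})$.

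\medskip
\textbf{A missing idea.} The scheme you describe --- annihilate levels up to $\ell_1\asymp\log n$, spread the remaining budget geometrically, cut off at $L$ --- is the template of Propositions~\ref{hai}--\ref{bon}, but here it only yields the weaker exponent $(d-1)(t-\tfrac{1}{p_1}+\tfrac{1}{p_2})$ on the logarithm, which is strictly larger than $(d-1)(2t-\tfrac{2t}{p_1})$ in the present range (the difference is $t(\tfrac{2}{p_1}-1)-(\tfrac{1}{p_1}-\tfrac{1}{p_2})>0$ by hypothesis). The paper's proof (Proposition~\ref{nam}) inserts an \emph{additional} intermediate level
\[
K=\Big[J+\Big(\tfrac{2}{p_1}-1\Big)(d-1)\log J\Big],
\]
so $K-J\asymp\log\log n$, and uses two different finite-dimensional embeddings on the two pieces: on $J<\mu\le K$ it routes through $\ell_2$, bounding $c_{n_\mu}(id_\mu^*)$ via $c_{n_\mu}(id_{p_1,2}^{D_\mu})$ with allocation $n_\mu=[D_\mu 2^{(\mu-K)\beta+J-\mu}]$; on $K<\mu\le L$ it uses $c_{n_\mu}(id_{p_1,p_2}^{D_\mu})$ directly with allocation $n_\mu=[J^{d-1}2^J 2^{(K-\mu)\gamma}]$. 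The conditions $t<\tfrac12$ and $t>\tfrac{1/p_1-1/p_2}{2/p_1-1}$ are used to make the two partial sums peak at $\mu=K$, and it is precisely the $\log J$ shift in $K$ that converts the exponent into $2t-\tfrac{2t}{p_1}$. This four-part split (and the choice of two different targets on the two middle pieces) is the technical heart that your outline omits; saying it ``parallels the corresponding passages in the proof of Theorem~\ref{main}'' understates the new ingredient. The case $p_2\le p_1$ is then handled by factoring through $s^{0,\Omega}_{p,2}f$ for a suitable $p\in(p_1,2)$ and reducing to the previous case.
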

We conjecture that the upper estimate given in Proposition \ref{conj} is sharp. Now we turn to extreme cases given by either $p_2=\infty$ or $p_2=1$. Let us recall a result of Temlyakov \cite{Te93}, see also \cite{CKS}.
\begin{proposition}\label{known1}
Let $t>\frac{1}{2}$. Then we have 
\beqq
 c_{n} ({\rm App}: \, S^{t}_{2,2}B (\Omega) \to L_\infty (\Omega)) \asymp
n^{-t+\frac{1}{2}} (\log n)^{(d-1)t} , \qquad n\geq 2\, .
\eeqq
\end{proposition}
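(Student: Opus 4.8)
The plan is the standard two-sided scheme: discretise the embedding by a hyperbolic-cross decomposition, read off the upper bound from an explicit linear algorithm, and match it by a fooling-subspace argument that is reduced to a finite-dimensional sampling inequality. First I would fix a band-limited tensor system $\{\psi_{\bar j,m}\}_{\bar j\in\N_0^d}$ (periodic ``wavelets'', i.e.\ trigonometric polynomials with frequencies in the $\bar j$-th dyadic block), normalised in $L_2(\Omega)$, so that $\|f\,|\,S^t_{2,2}B(\Omega)\|^2\asymp\sum_{\bar j}2^{2t|\bar j|_1}\sum_m|\langle f,\psi_{\bar j,m}\rangle|^2$, with $\|\psi_{\bar j,m}\|_\infty\asymp 2^{|\bar j|_1/2}$ and bounded overlap at each level. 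Writing $f_\ell$ for the part of $f$ with $|\bar j|_1=\ell$ and $b_\ell(f)^2:=2^{2t\ell}\sum_{|\bar j|_1=\ell}\sum_m|\langle f,\psi_{\bar j,m}\rangle|^2$ (so that $\sum_\ell b_\ell(f)^2\asymp\|f\,|\,S^t_{2,2}B(\Omega)\|^2$), the whole argument rests on the elementary estimate
$$
\|f_\ell\|_\infty\;\ls\;2^{\ell/2}\sum_{|\bar j|_1=\ell}\big\|(\langle f,\psi_{\bar j,m}\rangle)_m\big\|_{\ell_2}\;\ls\;2^{-(t-1/2)\ell}\,\ell^{(d-1)/2}\,b_\ell(f),
$$
whose exponent $t-1/2$ is positive precisely under the hypothesis $t>1/2$.

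For the upper bound it suffices, by \eqref{xca}, to bound $a_n$. Choose $L$ with $2^LL^{d-1}\asymp n$ and let $P_L$ be the orthogonal projection onto $\Span\{\psi_{\bar j,m}:|\bar j|_1\le L\}$, of rank $\asymp n$. Since $f-P_Lf=\sum_{\ell>L}f_\ell$, the estimate above together with the Cauchy--Schwarz inequality gives
$$
\|f-P_Lf\|_\infty\;\ls\;\Big(\sum_{\ell>L}2^{-2(t-1/2)\ell}\ell^{d-1}\Big)^{1/2}\|f\,|\,S^t_{2,2}B(\Omega)\|\;\ls\;2^{-(t-1/2)L}L^{(d-1)/2}\,\|f\,|\,S^t_{2,2}B(\Omega)\|,
$$
and inserting $2^L\asymp n(\log n)^{-(d-1)}$ yields $c_n({\rm App})\le a_n({\rm App})\ls n^{-(t-1/2)}(\log n)^{(d-1)t}$.

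For the lower bound — the substantial part — pick the critical level $\ell^*$ so that $M:=\#\{(\bar j,m):|\bar j|_1=\ell^*\}\asymp 2^{\ell^*}(\ell^*)^{d-1}$ is comparable to $n$ but exceeds a suitable fixed multiple of it, and put $\mathcal V:=\Span\{\psi_{\bar j,m}:|\bar j|_1=\ell^*\}$, identified with $\mathbb R^M$ via the coefficients; for $w\in\mathbb R^M$ write $f_w$ for the corresponding element of $\mathcal V$, so that $\|f_w\,|\,S^t_{2,2}B(\Omega)\|\asymp 2^{t\ell^*}\|w\|_{\ell_2}$. For any $n$ functionals $L_1,\dots,L_n$ the subspace $W:=\mathcal V\cap\bigcap_i\ker L_i$ satisfies $\dim W\ge M-n\ge M/2$, and it is enough to find $w\in W$ with $\|f_w\|_\infty\gtrsim\sqrt M\,\|w\|_{\ell_2}$: rescaling such a $w$ into the unit ball of $S^t_{2,2}B(\Omega)$ then produces a function of $L_\infty$-norm $\gtrsim 2^{-t\ell^*}\sqrt M\asymp 2^{-(t-1/2)\ell^*}(\ell^*)^{(d-1)/2}\asymp n^{-(t-1/2)}(\log n)^{(d-1)t}$. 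For $x\in\Omega$ let $\phi_x(w):=f_w(x)$ and let $g_x\in\mathbb R^M$ be its Riesz representer, so that $\|f_w\|_\infty=\sup_x|\langle w,g_x\rangle|$; it then suffices to exhibit an $x$ with $\|P_Wg_x\|_{\ell_2}\gtrsim\sqrt M$ (take $w:=P_Wg_x/\|P_Wg_x\|_{\ell_2}$). Sampling on a uniform grid $G$ of mesh $\asymp 2^{-\ell^*}$ ($\#G\asymp 2^{d\ell^*}$), fine enough for the Marcinkiewicz--Zygmund equivalence $\sum_{x\in G}|h(x)|^2\asymp\#G\,\|h\|_{L_2}^2$ to hold for every $h\in\mathcal V$ and for each single $\psi_{\bar j,m}$, and taking an orthonormal basis $w_1,\dots,w_r$ of $W^\perp$ ($r\le n$), one obtains
$$
\sum_{x\in G}\|g_x\|_{\ell_2}^2=\sum_{\bar j,m}\ \sum_{x\in G}|\psi_{\bar j,m}(x)|^2\asymp M\,\#G,\qquad \sum_{x\in G}\|P_{W^\perp}g_x\|_{\ell_2}^2=\sum_{i=1}^r\ \sum_{x\in G}|f_{w_i}(x)|^2\ls n\,\#G .
$$
Hence the average over $x\in G$ of $\|P_Wg_x\|_{\ell_2}^2=\|g_x\|_{\ell_2}^2-\|P_{W^\perp}g_x\|_{\ell_2}^2$ is $\gtrsim M-O(n)\gtrsim M$ by the choice of $\ell^*$, so some grid point does the job and the lower bound follows.

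The hard part is this lower bound. One has to pick building blocks that simultaneously carry the Besov characterisation and obey the sampling inequalities (which is why I would work with band-limited hyperbolic-cross kernels — Dirichlet or de~la~Vall\'ee~Poussin type — rather than compactly supported wavelets), and one has to justify the two-sided bound $\sum_{x\in G}\|g_x\|_{\ell_2}^2\asymp M\,\#G$, i.e.\ that the point evaluations at the grid are, on average, ``maximally aligned'' with the anisotropic frequency layer $|\bar j|_1=\ell^*$; it is exactly this alignment that creates the extra factor $n^{1/2}$ compared with the $L_2$-target. This is carried out in \cite{Te93}; the remaining steps — the transfer between the sequence and function levels and the manipulations with the $s$-number axioms of Section~\ref{sec-main} — are routine.
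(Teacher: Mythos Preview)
Your sketch is correct, but note that the paper itself does not actually prove Proposition~\ref{known1}: it is stated as a recall of a result of Temlyakov~\cite{Te93} (see also~\cite{CKS}), and the only argument the paper adds is the observation in Remark~\ref{remark} that for a Hilbert space source one always has $x_n(T)=c_n(T)=a_n(T)$, so the Gelfand-number statement is immediate from the approximation-number result in the cited references.

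What you have written is, in effect, a self-contained reconstruction of Temlyakov's argument, which is more informative than a bare citation. Your upper bound via the hyperbolic-cross projection $P_L$ is the standard one; your lower bound --- restricting to the critical layer $|\bar j|_1=\ell^*$, passing to the Riesz representers $g_x$ of point evaluations, and using a Marcinkiewicz--Zygmund sampling inequality to find a grid point at which $\|P_W g_x\|_{\ell_2}\gtrsim\sqrt{M}$ --- is a clean direct attack on the Gelfand width that does not pass through the abstract identity $c_n=a_n$. The paper's route is shorter only because it outsources all of this substance to~\cite{Te93,CKS}; if you wish to match that economy, you may simply cite the $a_n$-result and invoke \cite[Proposition~11.5.2]{Pi-80} and \cite[Proposition~2.4.20]{Pi-87} for the Hilbert-domain identity. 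Conversely, your direct argument has the advantage that it makes transparent where the extra factor $(\log n)^{(d-1)/2}$ in the $L_\infty$-target (compared with $L_{p_2}$, $p_2<\infty$) originates, namely from the reproducing-kernel identity $\sum_m|\psi_{\bar j,m}(x)|^2\asymp 2^{|\bar j|_1}$ summed over the $\asymp\ell^{d-1}$ blocks on the hyperbolic layer.
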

\begin{remark}\label{remark}\rm In the literature many times the notation 
$H^{t}_{\text{mix}} (\Omega) $  and $MW^t_2 (\Omega)$ are used instead of $S^{t}_{2,2}B (\Omega)$. In \cite{Te93,CKS} the authors deal with approximation numbers. However, for Banach spaces $Y$ and Hilbert spaces $H$ we always have
\beqq
x_n(T:\ H\to Y)\, =\, c_n(T:\ H\to Y)\, =\, a_n(T:\ H\to Y),
\eeqq
see \cite[Proposition 11.5.2]{Pi-80} and \cite[Proposition 2.4.20]{Pi-87}.
\end{remark}
By using abstract properties of Gelfand numbers, results in Theorem \ref{main} and Proposition \ref{known1} one can derive the following result.
\begin{theorem}\label{ex} {\rm (i)} Let either $2\leq p<\infty$ and $t>0$ or $1<p<2$ and $t>\frac{1}{2}$. Then we have
 \[
 c_n({\rm App}:\ S^t_{p,p}B(\Omega) \to L_{1}(\Omega))\asymp n^{-t}(\log n)^{(d-1)( t-\frac{1}{p}+\frac{1}{2})}, \qquad n \ge2\, .
  \]
{\rm (ii)} Let $1<p\leq 2$ and  $t>1$. Then we have
 \[
 c_n({\rm App}:\ S^t_{p,p}B(\Omega) \to L_{\infty}(\Omega))\asymp n^{-t+\frac{1}{2}}(\log n)^{(d-1)(t-\frac{1}{p}+\frac{1}{2})}, \qquad n \ge2\, .
  \]
\end{theorem}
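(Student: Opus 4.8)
The strategy is to derive the statement entirely from the two results already available, Theorem \ref{main} and Proposition \ref{known1}, using the formal properties (a)--(e) of $s$-numbers and the elementary fact that, on the probability space $\Omega=(0,1)^d$, the embeddings $L_2(\Omega)\hookrightarrow L_1(\Omega)$ and $L_\infty(\Omega)\hookrightarrow L_q(\Omega)$ have norm $1$, while $L_q(\Omega)=[L_{q_0}(\Omega),L_{q_1}(\Omega)]_\theta$ by complex interpolation.

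For part (i) the upper bound is immediate: since $t>(1/p-1/2)_+$ one has $S^t_{p,p}B(\Omega)\hookrightarrow L_2(\Omega)\hookrightarrow L_1(\Omega)$, so factoring ${\rm App}$ through $L_2(\Omega)$ and applying (c) with $\|{\rm id}\colon L_2(\Omega)\to L_1(\Omega)\|=1$ gives $c_n({\rm App}\colon S^t_{p,p}B(\Omega)\to L_1(\Omega))\le c_n({\rm App}\colon S^t_{p,p}B(\Omega)\to L_2(\Omega))$, and the right-hand side is $\asymp n^{-t}(\log n)^{(d-1)(t-1/p+1/2)}$ by Theorem \ref{main} (case (i) if $p\ge2$, case (iii) if $p<2$, with $p_2=2$ in both). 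For the lower bound I fix any $q\in(1,2)$ and write $L_q(\Omega)=[L_1(\Omega),L_2(\Omega)]_\theta$, $1/q=1-\theta/2$. Because ${\rm App}$ is bounded into both $L_1(\Omega)$ and $L_2(\Omega)$, interpolating the operator norm of the restriction of ${\rm App}$ to a subspace $M_0$ of codimension $\le n-1$ and to a subspace $M_1$ of codimension $\le m-1$, and then passing to $M_0\cap M_1$, yields the multiplicativity $c_{n+m-1}({\rm App}\colon\to L_q)\le c_n({\rm App}\colon\to L_1)^{1-\theta}\,c_m({\rm App}\colon\to L_2)^{\theta}$. Since in Theorem \ref{main} the logarithmic exponent in cases (i) and (iii) does not depend on $p_2\in(1,2]$, both $c_n({\rm App}\colon\to L_q)$ and $c_n({\rm App}\colon\to L_2)$ are $\asymp n^{-t}(\log n)^{(d-1)(t-1/p+1/2)}$; taking $m=n$ and solving the inequality for $c_n({\rm App}\colon\to L_1)$ delivers the matching lower bound. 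This settles (i).

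For part (ii), the case $p=2$ follows from Proposition \ref{known1} (note $t>1>1/2$ and $t-1/2+1/2=t$). When $1<p<2$, two soft arguments give complementary, individually non-sharp estimates. On one hand, $S^t_{p,p}B(\Omega)\hookrightarrow S^{t-1/p+1/2}_{2,2}B(\Omega)$ combined with Proposition \ref{known1} gives $c_n({\rm App}\colon\to L_\infty)\lesssim n^{-t+1/p}(\log n)^{(d-1)(t-1/p+1/2)}$: the correct logarithmic power, but a polynomial rate that is too large. On the other hand, $L_\infty(\Omega)\hookrightarrow L_q(\Omega)$ for any fixed $q>2$, together with $L_q(\Omega)=[L_2(\Omega),L_\infty(\Omega)]_\theta$ and the multiplicativity above applied to $c_n({\rm App}\colon\to L_q)$ and $c_n({\rm App}\colon\to L_2)$ (both supplied by Theorem \ref{main}, case (iii)), gives $c_n({\rm App}\colon\to L_\infty)\gtrsim n^{-t+1/2}(\log n)^{(d-1)(t-1/p)}$: the correct polynomial rate, but a logarithmic exponent smaller by $(d-1)/2$. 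To obtain the sharp order I would pass to the sequence-space model of $S^t_{p,p}B(\Omega)$ used in Section \ref{proof}: grouping the hyperbolic blocks $\delta_{\bar j}f$ with $|\bar j|_1=\ell$, the operator ${\rm App}$ into $L_\infty(\Omega)$ becomes, on level $\ell$, a weighted copy of ${\rm id}\colon\ell_p^{m_\ell}\to\ell_\infty^{m_\ell}$ with $m_\ell\asymp2^{\ell}\ell^{d-1}$. Subadditivity (b) then reduces the upper bound to an optimal allocation of the budget $n$ among the levels, measured against the sharp asymptotics of the finite-dimensional Gelfand numbers $c_k({\rm id}\colon\ell_p^{m}\to\ell_\infty^{m})$ (for $p=2$ exactly the ingredient behind Proposition \ref{known1}), while the matching lower bound comes from a volume argument on the span of the wavelets of a suitable union of blocks, as in the proof of Theorem \ref{main}.

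The part I expect to be the main obstacle is precisely this logarithmic factor in (ii) for $p<2$, that is, the ``jump'' at $p_2=\infty$: neither the embedding into $S^{t-1/p+1/2}_{2,2}B(\Omega)$ nor interpolation with finite Lebesgue spaces can account for the extra $(\log n)^{(d-1)/2}$, so matching the correct polynomial rate with the correct logarithmic exponent forces the passage through the Garnaev--Gluskin-type asymptotics of $c_k({\rm id}\colon\ell_p^{m}\to\ell_\infty^{m})$ and the corresponding volume bounds, rather than a purely formal argument. Everything else --- all of part (i) and the case $p=2$ of part (ii) --- follows from Theorem \ref{main}, Proposition \ref{known1}, the $s$-number axioms and the elementary properties of the scale $\{L_q(\Omega)\}$ noted above.
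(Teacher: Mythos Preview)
Your treatment of part~(i) is correct and is essentially the paper's argument: the upper bound via factorization through $L_2(\Omega)$ (the paper uses $L_{1+\varepsilon}(\Omega)$, which is the same idea) and the lower bound via the interpolation inequality for Gelfand numbers (Proposition~\ref{inter} in the paper) applied to $L_q=[L_1,L_{p_1}]_\theta$, together with Theorem~\ref{main}(i)/(iii).

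Part~(ii) for $1<p<2$, however, has a genuine gap. You correctly observe that neither the embedding $S^t_{p,p}B(\Omega)\hookrightarrow S^{t-1/p+1/2}_{2,2}B(\Omega)$ nor interpolation against finite $L_q$ gives the sharp order, but the remedy you propose (a direct sequence-space computation with $c_k(id:\ell_p^m\to\ell_\infty^m)$ and volume bounds) is both unexecuted and unnecessary. The paper stays at the abstract level, as the sentence preceding the theorem promises. The missing idea for the \emph{upper bound} is to factor through $S^r_{2,2}B(\Omega)$ with $r$ treated as a \emph{free} parameter satisfying $r>\tfrac12$ and $t-r>\tfrac12$ (possible precisely because $t>1$), and to use the \emph{multiplicativity} of Gelfand numbers, $c_{2n-1}({\rm App})\le c_n({\rm App}_1)\,c_n({\rm App}_2)$, rather than the mere ideal property~(c). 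By lifting, $c_n({\rm App}_1:S^t_{p,p}B(\Omega)\to S^r_{2,2}B(\Omega))\asymp c_n(S^{t-r}_{p,p}B(\Omega)\to L_2(\Omega))\asymp n^{-(t-r)}(\log n)^{(d-1)(t-r-1/p+1/2)}$ from Theorem~\ref{main}(iii), while $c_n({\rm App}_2)\asymp n^{-r+1/2}(\log n)^{(d-1)r}$ from Proposition~\ref{known1}; the product is exactly $n^{-t+1/2}(\log n)^{(d-1)(t-1/p+1/2)}$, with $r$ cancelling. Your attempt fixed $r=t-1/p+1/2$ and only bounded $\|{\rm App}_1\|$, which is why it lost the polynomial rate.

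For the \emph{lower bound} in~(ii) the paper does not interpolate at all: it simply invokes $x_n\le c_n$ and the sharp Weyl-number asymptotics $x_n({\rm App}:S^t_{p,p}B(\Omega)\to L_\infty(\Omega))\asymp n^{-t+1/2}(\log n)^{(d-1)(t-1/p+1/2)}$ from \cite[Theorem~3.4]{KiSi}. Your interpolation route necessarily loses the $(\log n)^{(d-1)/2}$ because the logarithmic exponent in Theorem~\ref{main}(iii) depends on $1/p_2$, so letting $p_2\to\infty$ cannot recover the jump; the Weyl-number input bypasses this.
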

\begin{remark}\rm Observe that part (ii) in Theorem \ref{ex} is not the limit of part (iii) in Theorem \ref{main} when $p_2 \to \infty$. More exactly, there is a jump of order $(\log n)^{(d-1)/2}$ as it happens many times in this field. 
\end{remark}
From \eqref{wor-gel} and Theorem \ref{main} we have the estimate for the worst-case error of the class $\Lambda^{\rm all}$.
\begin{corollary}
Under the conditions of Theorem \ref{main}, for the embedding ${\rm App}:\ S^t_{p_1,p_1}B(\Omega) \to L_{p_2}(\Omega)$  we have
\[
e^{\rm wor}(n, \Lambda^{\rm all})\asymp n^{-\alpha}(\log n)^{(d-1)\beta} , \qquad n \ge2\, , 
 \]
where $\alpha$ and $\beta$ are given in Theorem \ref{main}.
\end{corollary}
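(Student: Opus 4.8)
The plan is simply to feed the sharp two-sided estimate of Theorem~\ref{main} into the general relation \eqref{wor-gel} between Gelfand widths and the minimal worst-case error for the class $\Lambda^{\rm all}$. First I would record that, under the standing hypotheses $1<p_1,p_2<\infty$ and $t>(\frac{1}{p_1}-\frac{1}{p_2})_+$, the operator ${\rm App}:\ S^t_{p_1,p_1}B(\Omega)\to L_{p_2}(\Omega)$ is compact (this is exactly the content of the remark following Theorem~\ref{main}, cf.\ \cite[Theorem~3.17]{Vy-06}). Taking $\tilde F=S^t_{p_1,p_1}B(\Omega)$, $F=B_{\tilde F}$ its closed unit ball, and $G=L_{p_2}(\Omega)$, compactness allows us to identify the Gelfand $n$-width of the image set $F={\rm App}(B_{\tilde F})$ in $G$ with a Gelfand number of the operator, namely $c_n(F,G)=c_{n+1}({\rm App})$ for all $n\ge1$, see \cite{EL}.

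Next I would invoke \eqref{wor-gel}, which gives for every $n\ge1$
\[
c_n(F,G)\ \le\ e^{\rm wor}(n,\Lambda^{\rm all})\ \le\ 2\,c_n(F,G)
\]
with absolute constants; combined with the previous identification this reads $e^{\rm wor}(n,\Lambda^{\rm all})\asymp c_{n+1}({\rm App})$. Finally I would substitute the asymptotics of Theorem~\ref{main}: for $n\ge2$,
\[
c_{n+1}({\rm App})\ \asymp\ (n+1)^{-\alpha}\bigl(\log(n+1)\bigr)^{(d-1)\beta}\ \asymp\ n^{-\alpha}(\log n)^{(d-1)\beta},
\]
the last equivalence holding because $(n+1)^{-\alpha}\asymp n^{-\alpha}$ and $\log(n+1)\asymp\log n$ for $n\ge2$. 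Chaining these equivalences yields $e^{\rm wor}(n,\Lambda^{\rm all})\asymp n^{-\alpha}(\log n)^{(d-1)\beta}$ with $\alpha,\beta$ as in Theorem~\ref{main}, which is the assertion.

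There is no genuine obstacle here: the corollary is a direct consequence of \eqref{wor-gel} and Theorem~\ref{main}. The only bookkeeping is the shift between the index $n$ of the width $c_n(F,G)$ and the index $n+1$ of the operator Gelfand number $c_{n+1}({\rm App})$, together with the passage from $n+1$ to $n$ inside the polynomial and logarithmic factors; both are immaterial for the order of convergence once $n\ge2$, which is precisely why the statement is formulated for $n\ge2$.
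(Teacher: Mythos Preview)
Your proposal is correct and follows exactly the route the paper takes: the corollary is stated there as an immediate consequence of \eqref{wor-gel} and Theorem~\ref{main}, with no further argument given. Your added bookkeeping about compactness, the identification $c_n(F,G)=c_{n+1}({\rm App})$, and the harmless index shift is accurate but more detailed than the paper's one-line justification.
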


\subsubsection*{A comparison with approximation numbers}
Since $a_n=e^{\rm wor-lin}(n,\Lambda^{\rm all}) $ and $c_n \asymp e^{\rm wor}(n,\Lambda^{\rm all})$, it is reasonable to compare Gelfand and approximation numbers of the embedding ${\rm App}: S^t_{p_1,p_1}B(\Omega)\to L_{p_2}(\Omega)$. The asymptotic behaviour of approximation numbers is given in the following theorem. 
\begin{theorem} \label{known}
Let $1< p_1,p_2< \infty $ and $ t>(\frac{1}{p_1}-\frac{1}{p_2})_+$. Then we have
$$a_n({\rm App}: S^t_{p_1,p_1}B(\Omega)\to L_{p_2}(\Omega))\asymp n^{-\alpha}(\log n)^{(d-1)\beta}\, , \ \ \ n\geq 2,$$
where
\begin{enumerate}
\item {\makebox[5.5cm][l]{$\alpha=t $, $\beta=t+(\frac{1}{2}-\frac{1}{p_1})_+$} if\ \  $ p_2\leq p_1 $};
\item {\makebox[5.5cm][l]{$\alpha=\beta=t-\frac{1}{p_1}+\frac{1}{p_2}$} if\ \  $ p_1\leq p_2\leq 2$ or $2\leq p_1\leq p_2$};
\item {\makebox[5.5cm][l]{$\alpha=\beta=t-\frac{1}{p_1}+\frac{1}{2}$} if\ \  $   2< p_2 <p_1'  $,\ $t>\frac{1}{p_1}$};
\item {\makebox[5.5cm][l]{$\alpha=\beta=t-\frac{1}{2}+\frac{1}{p_2}$} if\ \  $   2\leq  p_1' <p_2 $,\ $t>1-\frac{1}{p_2}$ };
\item {\makebox[5.5cm][l]{$\alpha=\frac{p_1'}{2}(t-\frac{1}{p_1}+\frac{1}{p_2})$, $\beta=\frac{2\alpha}
{p_1'}$} if\ \ $2\leq  p_1' <p_2 $,\ $t<1-\frac{1}{p_2}$}.
\end{enumerate}
\end{theorem}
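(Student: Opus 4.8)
The statement of Theorem \ref{known} is essentially known: the asymptotics of approximation numbers (equivalently linear widths) of embeddings of Besov spaces with dominating mixed smoothness into $L_{p_2}$ are spread over work of Galeev, Romanyuk, Dinh D\~ung and Vybiral, as well as the author's earlier papers \cite{KiSi}--\cite{Ki16}, and a complete account can be found in the survey \cite{DTU-16}. For completeness the plan is to reprove it along the standard scheme, which is also the one used for Gelfand numbers in Section \ref{proof}.

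First I would discretize: by the standard sequence-space machinery (cf.\ Sections \ref{besov} and \ref{proof}) one identifies $S^t_{p_1,p_1}B(\Omega)$ with a weighted sequence space and reduces, up to constants independent of $n$, the approximation numbers of ${\rm App}$ to those of the canonical identity $id: s^t_{p_1,p_1}b\to\ell_{p_2}$ between the associated sequence spaces. This identity is block-diagonal with respect to the dyadic decomposition: on the block indexed by $\bar j=(j_1,\dots,j_d)\in\N_0^d$ it acts, after normalization, as $2^{-t|\bar j|_1}\,id_{p_1,p_2}^{m_{\bar j}}$ with $m_{\bar j}\asymp 2^{|\bar j|_1}$, in the notation of \eqref{idlp}.

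Next I would insert the classical two-sided estimates for $a_k(id_{p_1,p_2}^m)$ — due to Pietsch, Kashin, Gluskin and others, collected in \cite[Chapter 2]{Pi-87} and \cite{DTU-16} — whose form depends on the position of $p_1, p_2$ relative to $2$; this is exactly the origin of the case distinction. For the upper bound I would distribute the available rank among the blocks, choosing $n_{\bar j}\ge0$ with $\sum_{\bar j}n_{\bar j}\le c\,n$, and use additivity and multiplicativity of $s$-numbers (axioms (b) and (c)) to bound $a_{c'n}(id)$ by an aggregate of $2^{-t|\bar j|_1}a_{n_{\bar j}}(id_{p_1,p_2}^{m_{\bar j}})$; the optimal allocation keeps the blocks with small $|\bar j|_1$ at essentially full rank and discards those with large $|\bar j|_1$, and summing over the $\asymp\ell^{d-1}$ multi-indices with $|\bar j|_1=\ell$ produces the factor $(\log n)^{(d-1)\beta}$. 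For the lower bound I would restrict ${\rm App}$ to a single well-chosen dyadic block, or to the span of a level set $\{|\bar j|_1=\ell\}$, and invoke the matching finite-dimensional lower bound, passing between the ranges $p_2\le p_1$ and $p_1\le p_2$ by means of the self-duality $a_n(T)=a_n(T')$ of approximation numbers of compact operators; finally I would undo the isomorphism to return to function spaces, which is routine.

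The hard part will be case (v), namely $2\le p_1'<p_2$ with small smoothness $t<1-\frac1{p_2}$ (so that $p_1\le2<p_2$), where the exponent $\alpha=\frac{p_1'}{2}(t-\frac1{p_1}+\frac1{p_2})$ reflects a genuine loss: there one must use Gluskin's sharp estimate for $a_k(id_{p_1,p_2}^m)$ in the range $p_1<2<p_2$, which itself has a break point, and carry the allocation argument through that break while still matching the lower bound. The limiting smoothness conditions ($t>\frac1{p_1}$ in (iii), $t>1-\frac1{p_2}$ in (iv)--(v)) are precisely the thresholds at which the block carrying the main term changes, so the transitions between neighbouring cases also need some care.
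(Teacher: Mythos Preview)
Your plan is correct and aligns with the standard scheme the paper itself uses for Gelfand numbers in Section~\ref{proof}. Note, however, that the paper does \emph{not} give an independent proof of Theorem~\ref{known}: parts (i)--(iv) are simply attributed to Romanyuk \cite{Rom1,Rom2} and Bazarkhanov \cite{Baz7}, and part (v) is dispatched in Remark~\ref{app} by observing that Gluskin's estimate
\[
a_n(id_{p_1,p_2}^{m})\asymp \max\Big\{m^{\frac{1}{p_2}-\frac{1}{p_1}},\ \min\{1,m^{1-\frac{1}{p_1}}n^{-\frac{1}{2}}\}\sqrt{1-n/m}\Big\},\qquad 2\le p_1'<p_2,
\]
has the same shape as \eqref{wth4}, so the argument of Proposition~\ref{ba} carries over verbatim with $a_n$ in place of $c_n$. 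Your outline is thus more ambitious than what the paper actually does; if you carry it out you will essentially be re-deriving the Romanyuk--Bazarkhanov results via the discretization of Lemma~\ref{weyl} and the block decomposition \eqref{ws-16}--\eqref{sum1}. One small correction: the target sequence space after discretization is $s^{0,\Omega}_{p_2,2}f$ (because $L_{p_2}=S^0_{p_2,2}F$), not a plain $\ell_{p_2}$; the reduction to $id_{p_1,p_2}^{D_\mu}$ happens only at the level of individual building blocks, and the inner $\ell_2$-norm is precisely what generates the extra $\mu^{(d-1)(\frac12-\frac1{p_1})_+}$ factors in cases (i) and (iii)--(iv), cf.\ Lemmas~\ref{ba2} and~\ref{wichtig1}.
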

%&&&&&&&&&&&&&&&&&&&&&&&&&&&&&&&&&&&&&&&&&&&&&&&&&&&&&&&&&&&&&&&&&&&&&&&&&&&&&&&&&&&&
%&&&&&&&&&&&&&&&&&&&&&&&&&&&&&&&&&&&&&&&&&&&&&&&&&&&&&&&&&&&&&&&&&&&&&&&&&&&&&&&&&&&&
\begin{remark}\rm Parts (i)-(iv) have been proved by Romanyuk \cite{Rom1,Rom2} and Bazarkhanov \cite{Baz7}. Part (v) follows analogously to the Gelfand case, see Remark \ref{app} below. 
\end{remark}
 The difference of Gelfand and approximation numbers of the embedding ${\rm App}: S^t_{p_1,p_1}B(\Omega)\to L_{p_2}(\Omega)$ is illustrated in the following figure, see  Theorems \ref{main}, \ref{known}  and Proposition \ref{conj}. We assume the dimension $d\geq 2$.

$$
\begin{tikzpicture}
\fill (0,0) circle (1.5pt);
\draw[->, ](0,0) -- (7,0);
\draw[->, ] (0,0) -- (0,6.8);
\draw (3,0)-- (3,6);
\draw (-0.06,3) -- (0.06,3);
\draw (0,6) -- (6,6);
\draw (6,0) -- (6,6);
\draw (3,3) -- (6,0);
\node[below] at (0,0) {$0$};
\node [below] at (3,0) {$\frac{1}{2}$};
\node [left] at (0,3) {$\frac{1}{2}$};
\node [left] at (0,6) {$ 1$};
\node [left] at (0,6.6) {$\frac{1}{p_2}$};
\node [below] at (6,0) {$1$};
\draw (3, -0.05) -- (3, 0.05);
\node [below] at (7,0) {$\frac{1}{p_1}$};
\node [] at (4.5,3.8) {$\lim\limits_{n \to \infty}\, \frac{c_n}{a_n} = 0  $};
\node [] at (3.95,1.2) {\small L: $c_n\asymp a_n$};
\node [] at (1.5,3) {$c_n\asymp a_n$};
\node [] at (4.25,0.3) {\small H:$\lim\limits_{n \to \infty}\, \frac{c_n}{a_n} = 0 $};

\node [right] at (-2.2,-1) {Figure 1. Comparison of Gelfand and approximation numbers};
\end{tikzpicture}
$$
Here H refers to the domain of ``high smoothness", i.e., $t>1-\frac{1}{p_2}$ and L refers to ``low smoothness", i.e., $t<1-\frac{1}{p_2}$. Figure 1 indicates that Gelfand numbers and approximation numbers show similar behaviour if either  $p_1\geq 2$ or  $   2\leq  p_1' <p_2 $,\ $t<1-\frac{1}{p_2}$, i.e., $c_n\asymp a_n$. This implies that in those cases, 
nonlinear algorithms for approximation problem  $
{\rm App}:  S^t_{p_1,p_1}B(\Omega) \to L_{p_2}(\Omega)
$ are not essentially better than linear algorithms. In other cases Gelfand number are essentially smaller than approximation numbers, i.e., $\lim_{n\to \infty}\frac{c_n}{a_n}=0$.

Now we proceed to the extreme cases. Since $L_{\infty}(\Omega)$ has the metric extension property, see \cite[Proposition C.3.2.2]{Pi-80} and also \cite[page 36]{Pin85}, we have $c_n(T)=a_n(T)$ for all linear bounded operator $T$ from Banach spaces $X$ into $L_{\infty}(\Omega)$, see \cite[Proposition 11.5.3]{Pi-80}. From this we can extend the result in Theorem \ref{ex} (ii) for approximation numbers.
\begin{theorem}\label{ex-2}
{\rm (i)} Let $2\leq p<\infty$ and $t>0$. Then we have
 \[
 a_n({\rm App}:\ S^t_{p,p}B(\Omega) \to L_{1}(\Omega))\asymp n^{-t}(\log n)^{(d-1)( t-\frac{1}{p}+\frac{1}{2})}, \qquad n \ge2\, .
  \]
{\rm (ii)} Let $1<p\leq 2$ and  $t>1$. Then we have
 \[
 a_n({\rm App}:\ S^t_{p,p}B(\Omega) \to L_{\infty}(\Omega))\asymp n^{-t+\frac{1}{2}}(\log n)^{(d-1)(t-\frac{1}{p}+\frac{1}{2})}, \qquad n \ge2\, .
  \]
\end{theorem}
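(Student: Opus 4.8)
The strategy is to reduce both statements to results already established for Gelfand numbers, using the two classical coincidence principles that have already been invoked in the excerpt. For part (ii) this is immediate: since $L_\infty(\Omega)$ has the metric extension property, Proposition 11.5.3 in \cite{Pi-80} gives $a_n(T) = c_n(T)$ for every bounded linear operator $T$ from an arbitrary Banach space into $L_\infty(\Omega)$. Applying this with $T = {\rm App}:\ S^t_{p,p}B(\Omega) \to L_\infty(\Omega)$, part (ii) follows verbatim from Theorem \ref{ex} (ii). So the only real content lies in part (i), the target space $L_1(\Omega)$.

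For part (i) the plan is a duality argument based on \eqref{dual}, $c_n(T) = d_n(T')$ for compact $T$, together with a matching statement relating approximation numbers to Kolmogorov numbers of the dual. First I would record that ${\rm App}:\ S^t_{p,p}B(\Omega)\to L_1(\Omega)$ is compact in the range considered (here $2\le p<\infty$ forces $1/p - 1 < 0 < t$, so the condition $t > (1/p-1)_+ = 0$ of Remark (i) after Theorem \ref{main} holds). Next I would identify the dual operator. Up to the usual identification of $L_1(\Omega)'$ with $L_\infty(\Omega)$ and of $S^t_{p,p}B(\Omega)'$ with a Besov space of dominating mixed smoothness of negative order $S^{-t}_{p',p'}B(\Omega)$ (this duality for mixed Besov spaces is standard; it is the analogue of the isotropic case and can be cited from the literature surveyed in \cite{DTU-16}), the dual of ${\rm App}:\ S^t_{p,p}B(\Omega)\to L_1(\Omega)$ is, again up to identification, an embedding ${\rm App}^*:\ L_\infty(\Omega)\to S^{-t}_{p',p'}B(\Omega)$. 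Then \eqref{dual} turns $c_n$ of this dual embedding into $d_n$ of the original, and conversely; the point is to package this so that the already-known asymptotics for ${\rm App}:\ S^t_{p,p}B(\Omega)\to L_\infty(\Omega)$ transfer to ${\rm App}:\ S^t_{p,p}B(\Omega)\to L_1(\Omega)$.

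A cleaner route, and the one I would actually pursue to avoid wrestling with negative-smoothness spaces, is to go through Theorem \ref{ex} (i) directly: Theorem \ref{ex} (i) already gives the asymptotics of $c_n({\rm App}:\ S^t_{p,p}B(\Omega)\to L_1(\Omega))$, so for part (i) of Theorem \ref{ex-2} it suffices to prove $a_n \asymp c_n$ in this situation. The inequality $c_n \le a_n$ is \eqref{xca}. For the reverse, one uses $a_n(T) = a_n(T')$ (approximation numbers are self-dual for compact operators, \cite[Theorem 11.7.3]{Pi-80}) and $c_n(T') = d_n(T'') = d_n(T)$ together with the chain $x_n \le c_n \le a_n$ applied to $T'$: concretely, $a_n(T) = a_n(T') \le$ (something controlled by $c_n(T'')$) is not automatic, so instead I would argue $a_n({\rm App}: S^t_{p,p}B\to L_1) = a_n(({\rm App})': L_\infty \to S^{-t}_{p',p'}B) = c_n(({\rm App})': L_\infty \to S^{-t}_{p',p'}B)$, the last equality again by the metric extension property of $L_\infty$ as the \emph{source} — wait, that needs the lifting property; $L_\infty$ has the extension property, $\ell_1$-type spaces have the lifting property, so one should instead use that $L_1$ has the metric lifting property, see \cite[page 36]{Pin85}, giving $c_n(T) = a_n(T)$ for $T$ \emph{from} $L_1$. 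Dualizing, $a_n({\rm App}: X \to L_1) = a_n(({\rm App})': L_\infty \to X') = c_n(({\rm App})': L_\infty \to X')$ only if $L_\infty$-source gives Gelfand$=$approximation, which is false in general. The robust statement is: $d_n(T) = c_n(T)$ when $T$ maps into $L_1$ (metric lifting property), hence $a_n(T') = a_n(T)$ and $c_n(T') = d_n(T) = c_n(T)$, and finally for operators into $L_1$ one has $a_n(T) = d_n(T)$, see \cite[Section 11.11]{Pi-80}; combining $a_n = d_n = c_n$ gives the claim. The main obstacle is precisely bookkeeping which of the dual pair $(L_1, L_\infty)$ carries the lifting and which the extension property, and making sure the abstract identity $a_n = d_n$ for range $L_1$ is applied to the correct operator; once that is pinned down, part (i) is immediate from Theorem \ref{ex} (i).
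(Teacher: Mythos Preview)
Your treatment of part (ii) is correct and is exactly what the paper does: the metric extension property of $L_\infty(\Omega)$ forces $c_n=a_n$ for any operator with target $L_\infty(\Omega)$, so Theorem~\ref{ex}(ii) transfers verbatim.

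Part (i), however, contains a genuine gap. The abstract identities you invoke for operators \emph{into} $L_1$ are not valid. The metric lifting property of $L_1$ (see \cite[Proposition~11.6.2]{Pi-80}) yields $d_n(T)=a_n(T)$ only when $L_1$ is the \emph{domain} of $T$, not the range; there is no general principle giving either $a_n(T)=d_n(T)$ or $d_n(T)=c_n(T)$ for $T:X\to L_1$. Your back-and-forth in the proposal (``wait, that needs the lifting property\ldots'') already signals the confusion, and the final chain $a_n=d_n=c_n$ rests on two statements neither of which is citable from \cite{Pi-80}. So as written the argument does not establish $a_n\asymp c_n$ for this embedding.

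The paper itself does not give a new proof of part~(i); it simply cites Romanyuk~\cite{Rom9}. If you want a self-contained argument using only what is already in the paper, there is a much cleaner route than duality: for the upper bound, factor through $L_{1+\varepsilon}(\Omega)$ with small $\varepsilon>0$,
\[
a_n\big({\rm App}:S^t_{p,p}B(\Omega)\to L_1(\Omega)\big)\ \le\ a_n\big({\rm App}:S^t_{p,p}B(\Omega)\to L_{1+\varepsilon}(\Omega)\big)\cdot\|{\rm id}:L_{1+\varepsilon}(\Omega)\to L_1(\Omega)\|,
\]
and apply Theorem~\ref{known}(i) (valid since $1+\varepsilon<2\le p$); for the lower bound simply use $c_n\le a_n$ together with Theorem~\ref{ex}(i). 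This mirrors the paper's own proof of Theorem~\ref{ex}(i) for Gelfand numbers and avoids the duality bookkeeping entirely.
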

\begin{remark}\rm (i) The proof of part (i) can be found in \cite{Rom9}. Theorems \ref{ex} and \ref{ex-2} indicate that  if $2\leq p<\infty$ and $t>0$ then 
\beqq
c_n({\rm App}:\ S^t_{p,p}B(\Omega) \to L_{1}(\Omega)) \asymp a_n({\rm App}:\ S^t_{p,p}B(\Omega) \to L_{1}(\Omega)).
\eeqq
(ii) We wish to mention that the study of approximation of functions with mixed smoothness in the uniform norm ($L_{\infty}$-norm) is more difficult. Beside the above result, there is only a small number of cases, where the exact order of $a_n({\rm App} : S^t_{p,q}B(\Omega) \to L_{\infty}(\Omega))$ (in this case $a_n=c_n$), if $n$ tends to infinity, has been found. We refer to comments and open problems presented in the survey \cite[Sections 4.5 and 4.6]{DTU-16}.
\end{remark}
\subsubsection*{Gelfand numbers of embeddings of Sobolev spaces with dominating mixed smoothness}

For better understanding and completeness we shall give the asymptotic behaviour of Gelfand numbers of embeddings of Sobolev spaces with dominating mixed smoothness. Let $1<p<\infty$ and $t\in \re$. Then $S^t_pH(\Omega)$  denotes the Sobolev spaces of fractional order with dominating mixed smoothness. These spaces represent special cases of the Triebel-Lizorkin spaces with dominating mixed smoothness $S^t_{p,q}F(\Omega)$,  i.e., $S^{t}_{p,2}F(\Omega) = S^t_pH(\Omega)$ in the sense of equivalent norms, see Section \ref{besov}. In the case $p=2$ we have $S^t_2H(\Omega)=S^t_{2,2}B(\Omega)=H^t_{\mix}(\Omega)$, see Section \ref{intro}. It is well-known that for $t=m\in \N$ 
\beqq
S^m_pH(\R):= \Big\{f\in L_p(\R): \|f|S^m_pH(\R)\|:=\sum_{|\alpha|_{\infty} \leq m}\|D^{\alpha}f|L_p(\R)\|<\infty\Big\}\,  
\eeqq 
in the sense of equivalent norms. Here $\alpha=(\alpha_1,...,\alpha_d)\in \N_0^d$ and $|\alpha|_{\infty}=\max_{i=1,...,d} |\alpha_i|$. Using  \eqref{dual}, lifting properties of Sobolev spaces with dominating mixed smoothness, see \cite[Section 2.2.6]{ST} and $ (S^t_{p}H(\Omega))'=S^{-t}_{p'}H(\Omega)$, see \cite[Section 5.5]{Hansen}, we obtain
\beqq
\begin{split}
c_n({\rm App}: S^t_{p_1}H(\Omega)\to L_{p_2}(\Omega))& = d_n({\rm App}: L_{p_2'}(\Omega) \to S^{-t}_{p_1'}H(\Omega)) \\ &\asymp d_n({\rm App}: S^t_{p_2'}H(\Omega)\to L_{p_1'}(\Omega)).
\end{split}
\eeqq 
Here $1<p_1,p_2<\infty$, $t>(\frac{1}{p_1}-\frac{1}{p_2})_+$ and $p_1',p_2'$ are conjugates of $p_1,p_2$ respectively. The behaviour of Kolmogorov numbers in such a context has been investigated at several places \cite{Rom6,Rom7,Rom4,Rom8,Rom5} and \cite{Baz7,Ga1,Tem}. Using the result on Kolmogorov numbers in the already mentioned references we obtain the following theorem, see also \cite[Section 9.7]{DTU-16}.
\begin{theorem}\label{gel-so}
Let $1< p_1,p_2< \infty $ and $ t>\big(\frac{1}{p_1}-\frac{1}{p_2}\big)_+$. Then we have
$$c_n({\rm App}: S^t_{p_1}H(\Omega)\to L_{p_2}(\Omega))\asymp  n^{-\alpha}(\log n)^{(d-1)\alpha}\, ,\ \ \ n\geq 2,$$
where
\begin{enumerate}
\item {\makebox[4.5cm][l]{$\alpha=t-\big(\frac{1}{p_1}-\frac{1}{p_2}\big)_+$} if\ \  $p_2\leq p_1 $ or $2\leq p_1\leq p_2 $};
\item {\makebox[4.5cm][l]{$\alpha=t-(\frac{1}{2}-\frac{1}{p_2})_+$} if\ \ $p_1\leq \min(p_2,2)$, $t>\max(\frac{1}{2},1-\frac{1}{p_2})$}.
\end{enumerate}
\end{theorem}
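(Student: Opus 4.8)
The plan is to deduce Theorem~\ref{gel-so} from the known sharp asymptotics of Kolmogorov numbers of embeddings $S^t_qH(\Omega)\to L_r(\Omega)$, using the duality identity \eqref{dual}; in fact the chain of reductions is precisely the one displayed just before the statement, so the proof amounts to making each link precise and then translating the outcome. Under the standing hypothesis $t>(1/p_1-1/p_2)_+$ the embedding ${\rm App}: S^t_{p_1}H(\Omega)\to L_{p_2}(\Omega)$ is compact (the Triebel-Lizorkin counterpart of \cite[Theorem 3.17]{Vy-06}), hence \eqref{dual} applies; identifying $(L_{p_2}(\Omega))'=L_{p_2'}(\Omega)$ and $(S^t_{p_1}H(\Omega))'=S^{-t}_{p_1'}H(\Omega)$ via \cite[Section 5.5]{Hansen}, and then shifting both smoothness parameters up by $t$ by means of the lifting property of the scale $S^s_pH$ (\cite[Section 2.2.6]{ST}) combined with a bounded extension operator for the cube, one arrives at
\[
c_n({\rm App}: S^t_{p_1}H(\Omega)\to L_{p_2}(\Omega))\ \asymp\ d_n({\rm App}: S^{t}_{p_2'}H(\Omega)\to L_{p_1'}(\Omega)),
\]
where $S^0_pH(\Omega)=L_p(\Omega)$ for $1<p<\infty$ is used.

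It then remains to insert, with $q=p_2'$ and $r=p_1'$, the sharp orders of $d_n({\rm App}: S^{t}_{q}H(\Omega)\to L_{r}(\Omega))$ obtained by Romanyuk and by Bazarkhanov, cf.\ \cite{Rom6,Rom7,Rom4,Rom8,Rom5,Baz7,Ga1,Tem} and the survey \cite[Section 9.7]{DTU-16}. In the ranges we need these read $d_n\asymp n^{-\gamma}(\log n)^{(d-1)\gamma}$ with $\gamma=t-(1/q-1/r)_+$ if $r\le q$ or $q\le r\le 2$, and with $\gamma=t-(1/q-1/2)_+$ if $r\ge\max(q,2)$ and $t>\max(1/2,1/q)$. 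To finish I would translate back: since $1/q-1/r=1/p_1-1/p_2$ and $1/q-1/2=1/2-1/p_2$, and since $r\le q\Leftrightarrow p_2\le p_1$, $q\le r\le 2\Leftrightarrow 2\le p_1\le p_2$, $r\ge\max(q,2)\Leftrightarrow p_1\le\min(p_2,2)$ and $\max(1/2,1/q)=\max(1/2,1-1/p_2)$, substitution reproduces exactly cases (i) and (ii) of the theorem, with logarithmic exponent equal to $\alpha$.

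The functional-analytic part of this is routine once \cite{Hansen} and \cite{ST} are invoked; the one point there that needs some care is transferring the lifting from $\R$ to the cube $\Omega$, which is standard but technical. The real obstacle, and the reason Theorem~\ref{gel-so} is stated only in two cases, lies in the Kolmogorov-number input: one must verify that the estimates collected from the several cited papers are genuinely formulated on $\Omega$ (rather than only on the torus or on $\R$), that together they cover precisely the claimed parameter ranges, and that the logarithmic factor there is exactly $(\log n)^{(d-1)\gamma}$. The small-smoothness regime with $p_1<2$ is omitted because the corresponding Kolmogorov numbers are not yet known, in parallel with the case left open in Theorem~\ref{main} and Proposition~\ref{conj}.
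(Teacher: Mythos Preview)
Your proposal is correct and follows essentially the same route as the paper: the argument preceding Theorem~\ref{gel-so} already records exactly the chain $c_n({\rm App})=d_n({\rm App}')\asymp d_n(S^t_{p_2'}H(\Omega)\to L_{p_1'}(\Omega))$ via duality \eqref{dual}, the identification $(S^t_pH(\Omega))'=S^{-t}_{p'}H(\Omega)$, and lifting, and then quotes the Kolmogorov-number literature \cite{Rom6,Rom7,Rom4,Rom8,Rom5,Baz7,Ga1,Tem}. Your explicit dictionary between the dual parameter regimes and the resulting exponents is a useful spelling-out of what the paper leaves implicit.
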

For the comparison of Gelfand numbers to approximation numbers of the embedding ${\rm App}: S^t_{p_1}H(\Omega)\to L_{p_2}(\Omega)$ with $1<p_1,p_2<\infty$ we refer to \cite[Section 9.7]{DTU-16}. Similar as in proof of Theorem \ref{ex} we obtain the behaviour of the Gelfand numbers in the extreme situations.
\begin{theorem}\label{ex-3}
{\rm (i)} Let $1< p<\infty$ and $t>0$. Then we have
 \[
 c_n({\rm App}:\ S^t_{p}H(\Omega) \to L_{1}(\Omega))\asymp  a_n({\rm App}:\ S^t_{p}H(\Omega) \to L_{1}(\Omega))\asymp n^{-t}(\log n)^{(d-1)t}, \qquad n \ge2\, .
  \]
{\rm (ii)} Let $1<p\leq 2$ and  $t>1$. Then we have
 \[
 c_n({\rm App}:\ S^t_{p}H(\Omega) \to L_{\infty}(\Omega))= a_n({\rm App}:\ S^t_{p}H(\Omega) \to L_{\infty}(\Omega))\asymp n^{-t+\frac{1}{2}}(\log n)^{(d-1)t}, \quad n \ge2\, .
  \]
\end{theorem}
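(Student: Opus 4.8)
The plan is to mirror the argument used for Theorem \ref{ex} and reduce both parts to results already established in the paper, exploiting the special structure of the endpoint targets $L_1(\Omega)$ and $L_\infty(\Omega)$. For part (i) I would start from the duality $c_n(\mathrm{App}: S^t_pH(\Omega)\to L_1(\Omega)) = d_n(\mathrm{App}: L_\infty(\Omega)\to S^{-t}_{p'}H(\Omega))$, together with the lifting isomorphisms for Sobolev spaces with dominating mixed smoothness and the identification $(S^t_pH(\Omega))' = S^{-t}_{p'}H(\Omega)$ recalled just before Theorem \ref{gel-so}; alternatively one can use $c_n(\mathrm{App}: S^t_pH(\Omega)\to L_1(\Omega)) \asymp d_n(\mathrm{App}: S^t_{\infty}H(\Omega)\to L_{p'}(\Omega))$ after the standard reduction, but since $p'\in(1,\infty)$ we may instead invoke the Kolmogorov-number results quoted for Theorem \ref{gel-so} in the range where the target index is $p'$. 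The cleanest route, however, is the one used for Theorem \ref{ex}(i): sandwich $L_1(\Omega)$ between $L_2(\Omega)$ and a slightly larger space, use the monotonicity $c_n(\mathrm{App}: S^t_pH(\Omega)\to L_1(\Omega)) \le c_n(\mathrm{App}: S^t_pH(\Omega)\to L_2(\Omega))$ for the upper bound (valid since $L_2(\Omega)\hookrightarrow L_1(\Omega)$ on a bounded domain), and for the lower bound apply property (c) of $s$-numbers with the embedding $S^{t+\varepsilon}_pH(\Omega)\hookrightarrow S^t_pH(\Omega)\to L_1(\Omega)$ against a suitable discretization, or more directly use $c_n(\mathrm{App}: S^t_2H(\Omega)\to L_1(\Omega))$ which by Theorem \ref{gel-so}(ii) (with $p_1=2$, $p_2\ge 2$ limiting case handled via $L_2\hookrightarrow L_1$) already gives the matching power $n^{-t}(\log n)^{(d-1)t}$.

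For the upper bound in part (i), the key inequality is $c_n(\mathrm{App}: S^t_pH(\Omega)\to L_1(\Omega)) \le c_n(\mathrm{App}: S^t_pH(\Omega)\to L_2(\Omega))$, and the right-hand side is $\asymp n^{-t}(\log n)^{(d-1)t}$ by Theorem \ref{gel-so} (case $p_2=2$: for $p\ge 2$ this is case (i) with $\alpha=t$ since then $(1/p-1/2)_+=0$; for $p<2$, $t>0$, one needs $t>\max(1/2,1-1/2)=1/2$, which is not assumed, so for $1<p<2$ and $0<t\le 1/2$ one instead uses the factorization through $S^t_pH(\Omega)\hookrightarrow S^{t}_2H(\Omega)$ — wait, this embedding goes the wrong way; instead use $L_1$ directly via the sequence-space estimates from Section \ref{proof}, or invoke that $\mathrm{App}: S^t_pH(\Omega)\to L_1(\Omega)$ factors as $S^t_pH(\Omega)\to L_p(\Omega)\hookrightarrow L_1(\Omega)$ combined with the known order of $c_n$ for $S^t_pH\to L_p$). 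For the lower bound, since $L_1(\Omega)\hookrightarrow$ is not available, one uses that $\mathrm{App}: S^t_pH(\Omega)\to L_1(\Omega)$ dominates a finite-rank model: restrict to a wavelet block at level $\bar j$ with $|\bar j|_1 = \ell$, where the identity becomes $id^{m}_{p,1}$ up to normalization, and use the known lower bounds for Gelfand numbers of $id^m_{p,1}: \ell_p^m\to\ell_1^m$; summing the contribution over $\ell\asymp \log n$ dyadic blocks produces the logarithmic factor $(\log n)^{(d-1)t}$ with the right exponent. The coincidence $c_n\asymp a_n$ is then immediate from \eqref{xca} once the lower bound for $c_n$ matches the known upper bound for $a_n$ from Theorem \ref{ex-2}(i).

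Part (ii) is simpler: because $L_\infty(\Omega)$ has the metric extension property, $c_n(T)=a_n(T)$ for every bounded linear $T$ into $L_\infty(\Omega)$ (as already used before Theorem \ref{ex-2}), so the first equality is automatic and it remains to identify the common order. For $1<p\le 2$ and $t>1$ I would factor $\mathrm{App}: S^t_pH(\Omega)\to L_\infty(\Omega)$ through $S^t_2H(\Omega)$ — but again the embedding $S^t_pH(\Omega)\hookrightarrow S^t_2H(\Omega)$ fails for $p<2$; instead one uses the embedding $S^{t}_pH(\Omega)\hookrightarrow S^{t-(1/p-1/2)}_2H(\Omega)$ with a loss of smoothness $1/p-1/2$, together with Proposition \ref{known1} for the target $L_\infty$. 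Concretely, $S^t_pH(\Omega) = S^t_{p,2}F(\Omega)\hookrightarrow S^{t}_{p,p\vee 2}B(\Omega)$ and then a Jawerth–Franke type embedding $S^t_{p,p}B(\Omega)\hookrightarrow S^{t-1/p+1/2}_{2,p}B(\Omega)$ (valid for $p<2$), so $c_n(\mathrm{App}:S^t_pH(\Omega)\to L_\infty) \lesssim c_n(\mathrm{App}: S^{t-1/p+1/2}_{2,2}B(\Omega)\to L_\infty)$, which by Proposition \ref{known1} (applicable since $t-1/p+1/2>1-1/p+1/2 = 3/2-1/p > 1/2$ as $p>1$, and indeed $t>1$ gives room) equals $n^{-(t-1/p+1/2)+1/2}(\log n)^{(d-1)(t-1/p+1/2)} = n^{-t+1/p}(\log n)^{(d-1)(t-1/p+1/2)}$; hmm, this produces $n^{-t+1/p}$ rather than the claimed $n^{-t+1/2}$, so this crude factorization over-counts and one must instead proceed as in the proof of Theorem \ref{ex}(ii) — namely interpolate/combine the lower bound coming from the block structure with an upper bound built from an explicit sampling-type operator. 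I expect this matching of constants in the exponent — getting $\alpha=t-1/2$ rather than $t-1/p$ in part (ii), and handling the full range $0<t\le 1/2$ in part (i) — to be the main obstacle; the resolution is to follow verbatim the scheme of Theorem \ref{ex} (combining Theorem \ref{main}, Proposition \ref{known1} and the abstract $s$-number inequalities $x_n\le c_n\le a_n$) rather than attempting a single-step embedding.
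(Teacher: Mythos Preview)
Your proposal eventually arrives at the right destination, but it wanders through several dead ends and, for part (i), never settles on the clean argument the paper actually uses.

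For part (i), the paper's proof is much shorter than anything you sketch. The upper bound is simply $c_n\le a_n$ together with Romanyuk's result $a_n({\rm App}:S^t_pH(\Omega)\to L_1(\Omega))\asymp n^{-t}(\log n)^{(d-1)t}$ (cited in the remark following the theorem). You do mention this at the end, but your reference to Theorem~\ref{ex-2}(i) is wrong: that result is for $S^t_{p,p}B$, not $S^t_pH$. For the lower bound the paper does \emph{not} use a wavelet-block argument; it uses the interpolation inequality of Proposition~\ref{inter} exactly as in Substep~1.2 of the proof of Theorem~\ref{ex}. One picks $1<p_0<p_1<\min(p,2)$ with $1/p_0=\theta+ (1-\theta)/p_1$, so that $\|f|L_{p_0}\|\le \|f|L_1\|^{1-\theta}\|f|L_{p_1}\|^{\theta}$, and then Proposition~\ref{inter} combined with Theorem~\ref{gel-so}(i) (both $c_n(S^t_pH\to L_{p_0})$ and $c_n(S^t_pH\to L_{p_1})$ are $\asymp n^{-t}(\log n)^{(d-1)t}$ since $p_0,p_1<p$) forces the matching lower bound for $c_n(S^t_pH\to L_1)$. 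Your block approach would have to cope with the Triebel--Lizorkin structure $s^{t,\Omega}_{p,2}f$ (Lemma~\ref{ba1}(ii) only identifies $(s^{t,\Omega}_{p,p}f)_\mu$ with a weighted $\ell_p^{D_\mu}$), so it is not obviously reducible to $c_n(id^m_{p,1})$; this is a genuine gap in your plan.

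For part (ii) your final sentence is correct: the paper follows the scheme of Theorem~\ref{ex}(ii) verbatim, factoring ${\rm App}$ through $S^r_2H(\Omega)$ with $\frac12<r<t-\frac12$, using multiplicativity of Gelfand numbers, Proposition~\ref{known1} for ${\rm App}_2:S^r_2H(\Omega)\to L_\infty(\Omega)$, and lifting plus Theorem~\ref{gel-so}(ii) for ${\rm App}_1:S^t_pH(\Omega)\to S^r_2H(\Omega)$. The lower bound is obtained from $x_n\le c_n$ and the Weyl-number result of \cite{Ki16}, not from any of the embeddings you attempted. Your Jawerth--Franke detour indeed gives the wrong main exponent and should be discarded.
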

\begin{remark}
\rm (i) The asymptotic behaviour of approximation numbers in part (i) has been proved by Romanyuk \cite{Rom9}.\\
(ii) Recall that part (ii) in Theorem \ref{ex-3} still holds true if $p=2$ and $t>\frac{1}{2}$ since $S^t_2H(\Omega)=S^t_{2,2}B(\Omega)$ in the sense of equivalent norms, see Proposition \ref{known1} and Remark \ref{remark}.\\
(iii) Observe, to prove Theorem \ref{main} by applying the same duality argument as in Theorem \ref{gel-so} we would need to know
\beqq
d_n({\rm App}: S^t_{p_2'}H(\Omega)\to S^0_{p_1',p_1'}B(\Omega)),
\eeqq
since $(S^{t}_{p_1,p_1}B(\Omega))'=S^{-t}_{p_1',p_1'}B(\Omega)$, see \cite[Section 2.3.8]{Hansen}. However, these numbers are not investigated except in a few special cases, e.g., $p_1=2$. 
\end{remark}
\section{Besov and Triebel-Lizorkin spaces with dominating mixed smoothness  }
\label{besov}

\subsection{Spaces on $\R$ and on the unit cube}
\label{space2}
Let us first introduce the Besov space with dominating mixed smoothness $ S^{t}_{p,p}B(\re^d)$. Detailed treatments of these spaces are given at various places, we refer to the monographs \cite{Am,ST}, see also \cite{Baz1,Baz2,Baz3} and \cite{Vy-06}. In this section we shall review the spaces $ S^{t}_{p,p}B(\re^d)$ by using the Fourier analytic
approach. 

Let $\cs (\R)$ be the Schwartz space of all complex-valued rapidly decreasing infinitely differentiable  functions on $\R$. 
The topological dual, the class of tempered distributions, is denoted by $\cs'(\R)$ (equipped with the weak topology). We denote the Fourier transform and its inverse on $\cs(\R)$ by $\cf$ and $\cfi$. Both $\cf$ and $\cfi$ are extended to $\cs'(\R)$ in the usual way. 
Let   $\varphi_0(\xi)\in C_0^{\infty}({\re})$ with $\varphi_0(\xi) = 1$ on $[-1,1]$ and $\supp\varphi_0 \subset [-\frac{3}{2},\frac{3}{2}]$. For $j\in \N$ we define
      $$
         \varphi_j(\xi) = \varphi_0(2^{-j}\xi)-\varphi_0(2^{-j+1}\xi)\,, \ \ \ \xi\in \re.
      $$
   For $\bar{k} = (k_1,...,k_d) \in {\N}_0^d$ the function
    $\varphi_{\bar{k}}(x) \in C_0^{\infty}(\R)$ is defined as
    $$
        \varphi_{\bar{k}}(x) := \varphi_{k_1}(x_1)\cdot...\cdot
         \varphi_{k_d}(x_d)\,,\quad x\in \R.
    $$
   
%&&&&&&&&&&&&&&&&&&&&&&&&&&&&&&&&&&&&&&&&&&&&&&&&&&&&&&&&&&&&&&&&&&&&&&&
%&&&&&&&&&&&&&&&&&&&&&&&&&&&&&&&&&&&&&&&&&&&&&&&&&&&&&&&&&&&&&&&&&&&&&&&

\begin{definition}\label{def-be} Let $1<p< \infty$ and $t \in\re$. The Besov space with dominating mixed smoothness $ S^{t}_{p,p}B(\re^d)$ is the
collection of all tempered distributions $f \in \mathcal{S}'(\R)$
such that
\[
 \|\, f \, |S^{t}_{p,p}B(\R)\| :=
\Big(\sum\limits_{\bar{k}\in \N_0^d} 2^{t|\bar{k}|_1  p}\, \|\, \cfi[\varphi_{\bar{k}}\, \cf f](\, \cdot \, )
|L_p(\re^d)\|^p\Big)^{1/p} <\infty.
\] 
\end{definition}
\begin{remark}\rm 
(i) If $d=1$ we obtain $
 S^{t}_{p,p} B(\re) = B^t_{p,p}(\re)  $ where $B^t_{p,p}(\re)$ is the isotropic Besov space on $\re$.  There is an extensive literature about isotropic Besov spaces, we refer to the monographs of Nikol'skij \cite{Ni} and Triebel \cite{Tr83,Tr92,Tr06}. Probably, one of the most interesting properties of Besov spaces with dominating mixed smoothness consists in the cross-norm, i.e., if $f_i \in B^t_{p,p}(\re)$ for $ i=1, ... , d$ then
\[
 f(x) = \prod_{i=1}^d f_i (x_i)  \in S^t_{p,p}B(\R) \qquad\text{and}\qquad  
 \| \, f \, | S^t_{p,p}B(\R)\| = \prod_{i=1}^d \|\, f_i \, |B^t_{p,p} (\re)\| \, .
\]
(ii) If $1<p<\infty$ and $t>0$, then the scale $S^t_{p,p}B(\R)$ can be characterized by differences, see \cite[Chapter 2]{ST}, but see also \cite{Am,U1}. 
\end{remark}

The space $S^{t}_{p,p}B(\re^{d})$ is actually a $d$-fold tensor product of the space $B^{t}_{p,p}(\re)$. Tensor products of Besov spaces have been investigated in \cite{SUt,SUspline}. For $1<p<\infty$, let $\sigma_p$ denote the $p$-nuclear tensor norm. Concerning the basic notions of tensor products of Banach spaces and basic properties of the  $p$-nuclear tensor norm we refer to \cite{LiCh}, but see also \cite{DF}. We have the following result.
\begin{proposition}\label{tensor-r} Let $d> 1$, $t \in \re$ and $1<p<\infty$. Then the following formula
\beqq
 S^{t}_{p,p}B(\re^{d}) = B^{t}_{p,p}(\re)\otimes_{\sigma_p} \cdots \otimes_{\sigma_p}
B^{t}_{p,p}(\re) \ \ \ \text{($d$ times)}
\eeqq
holds true in the sense of equivalent norms.
\end{proposition}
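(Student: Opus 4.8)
The strategy is to verify that the Fourier-analytic norm on $S^t_{p,p}B(\re^d)$ from Definition \ref{def-be} coincides, up to constants, with the $d$-fold $\sigma_p$-tensor norm on $B^t_{p,p}(\re)\otimes_{\sigma_p}\cdots\otimes_{\sigma_p}B^t_{p,p}(\re)$. The natural route is to pass to sequence-space models. Using the smooth dyadic decomposition $\{\varphi_{\bar k}\}$, one has an isomorphism $S^t_{p,p}B(\re^d)\cong \ell_p\big(\N_0^d,\, 2^{t|\bar k|_1}L_p(\re^d)\big)$ in a form that respects the tensor structure: since $\varphi_{\bar k}=\varphi_{k_1}\otimes\cdots\otimes\varphi_{k_d}$, the building blocks $\cfi[\varphi_{\bar k}\cf f]$ are, frequency-wise, tensor products of one-dimensional analogues. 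So first I would recall (from \cite{SUt,SUspline}, or reprove via wavelet/Daubechies bases on $\re$) that $B^t_{p,p}(\re)$ admits an unconditional basis yielding an isomorphism $B^t_{p,p}(\re)\cong \ell_p$ (after rescaling by $2^{t|k|}$ on the $k$-th dyadic block and using $|I|^{1/p-1/2}$ wavelet normalization to land exactly in $\ell_p$).

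The heart of the matter is then a statement purely about sequence spaces: $\ell_p\otimes_{\sigma_p}\cdots\otimes_{\sigma_p}\ell_p\cong\ell_p$ (as $d$-fold tensor product), isometrically or at least isomorphically, and compatibly with the identification above. This is a classical fact about the $p$-nuclear tensor norm $\sigma_p$ (also written $g_p$ or $d_p$ in the literature, e.g.\ \cite{DF}): the diagonal tensor norm on $\ell_p\widehat\otimes_{\sigma_p}\ell_q$ reproduces the "right" mixed-norm sequence space, and in the balanced case $p=q=\cdots$ one gets back $\ell_p$. I would invoke this together with associativity of $\sigma_p$ under the $d$-fold product. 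The second ingredient is that tensor products commute with the wavelet isomorphisms: if $T_i\colon B^t_{p,p}(\re)\to\ell_p$ are the coordinate isomorphisms, then $T_1\otimes\cdots\otimes T_d$ extends to a bounded isomorphism $B^t_{p,p}(\re)\otimes_{\sigma_p}\cdots\otimes_{\sigma_p}B^t_{p,p}(\re)\to \ell_p\otimes_{\sigma_p}\cdots\otimes_{\sigma_p}\ell_p$ with norm controlled by $\prod\|T_i\|\,\prod\|T_i^{-1}\|$, by the metric mapping property of uniform tensor norms (axiom for $\sigma_p$). Composing gives the desired isomorphism onto $\ell_p\cong S^t_{p,p}B(\re^d)$.

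Concretely the steps are: (1) fix a compactly supported smooth (Daubechies-type) wavelet basis on $\re$ adapted to the resolution levels, and state the sequence-space isomorphism $B^t_{p,p}(\re)\cong\ell_p$ via wavelet coefficients, with the $2^{t j}$ weights absorbed; (2) observe that the tensor-product wavelet system $\psi_{\bar j,\bar m}=\prod_i \psi_{j_i,m_i}$ is exactly a characterizing system for $S^t_{p,p}B(\re^d)$ with norm $\big(\sum 2^{t|\bar j|_1 p}\sum_{\bar m}|\langle f,\psi_{\bar j,\bar m}\rangle|^p \cdot(\text{scaling})\big)^{1/p}$, i.e.\ $S^t_{p,p}B(\re^d)\cong\ell_p(\N_0^d\times\zz^d)\cong\ell_p$ — this is where the product structure of $\varphi_{\bar k}$ is used; (3) quote that $\sigma_p$ is a uniform (reasonable) crossnorm satisfying the metric mapping property and is associative, so tensoring the maps in (1) gives a bounded-with-bounded-inverse map between the two $d$-fold tensor products; (4) identify $\ell_p\otimes_{\sigma_p}\cdots\otimes_{\sigma_p}\ell_p$ with $\ell_p$ via the diagonal description of $\sigma_p$ on sequence spaces. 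Chaining (1)–(4) yields the equivalence of norms claimed.

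The main obstacle I anticipate is step (4) together with the compatibility bookkeeping in step (3): one must be careful that the particular realization of $\sigma_p$ on infinite-dimensional $\ell_p$ spaces (completed projective $p$-nuclear tensor product) genuinely reproduces $\ell_p$ and not merely a space with an equivalent but index-set-dependent norm, and that the wavelet isomorphisms of step (1) are the \emph{same} on each factor so that their tensor product is well-defined and the constants multiply controllably in $d$. The clean way around this is to reduce everything to finite-dimensional sections $\ell_p^N$, use the elementary isometry $\ell_p^N\otimes_{\sigma_p}\ell_p^M = \ell_p^{NM}$ (which holds for the $p$-nuclear norm because it equals the "diagonal" crossnorm there), pass to the limit by density of finitely-supported tensors, and only then glue back to function spaces. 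Everything else — the wavelet characterization of $B^t_{p,p}$ and of $S^t_{p,p}B$, and the metric mapping property of $\sigma_p$ — is standard and can be cited from \cite{ST,Am,SUt,SUspline,DF,LiCh}.
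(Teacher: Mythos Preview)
The paper does not supply a proof of this proposition at all: in the remark immediately following it, the author simply states that the proof can be found in \cite{SUt}. So there is no in-paper argument to compare against line by line.

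Your plan is correct and is in fact the standard route used in \cite{SUt}. The two essential ingredients you identify are exactly the right ones: (a) a wavelet isomorphism $B^t_{p,p}(\re)\cong\ell_p$ on each factor and the corresponding tensor-wavelet isomorphism $S^t_{p,p}B(\R)\cong\ell_p$ (this is the content of Lemma \ref{wavelet} in the present paper, specialized to $q=p$); and (b) the identification $\ell_p\,\widehat\otimes_{\sigma_p}\,\ell_p=\ell_p$, which is the sequence-space instance of the classical fact $L_p(\mu)\,\widehat\otimes_{\sigma_p}\,L_p(\nu)=L_p(\mu\otimes\nu)$ for the $p$-nuclear (Chevet--Saphar) tensor norm, see \cite{LiCh} or \cite{DF}. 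The metric mapping property of $\sigma_p$ then lets you transport the coordinate wavelet isomorphisms through the tensor product exactly as you describe in step (3). Your caution about step (4) is well placed but the difficulty is not serious: the identity $L_p\,\widehat\otimes_{\sigma_p}\,L_p=L_p$ is isometric, not merely isomorphic, so no index-set-dependent constants creep in; the only constants in the final equivalence come from the wavelet characterizations in steps (1)--(2), and those are cited results. In short, your outline is sound and matches the argument the paper is pointing to.
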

\begin{remark}
\rm The proof of Proposition  \ref{tensor-r} can be found in \cite{SUt}. Tensor product of more than two spaces should be understood as iterated tensor products, i.e., $X\otimes_{\sigma_p}Y\otimes_{\sigma_p}Z = X\otimes_{\sigma_p}(Y\otimes_{\sigma_p}Z )$.
\end{remark}
For later use, let us recall the lifting properties of Besov spaces with dominating mixed smoothness, see \cite[Section 2.2.6]{ST}.
\begin{theorem}\label{lift}
Let $t,r\in \re$ and $1<p<\infty$.  We define the lifting operator by
\beqq
I_{r}f :=\cf^{-1}[(1+ \xi_1^2)^{r/2}\cdots (1+\xi_d^2)^{r/2}\cf f]\,,\qquad f\in \cs' (\R),\ \ \xi=(\xi_1,...,\xi_d)\in \R.
\eeqq
Then $I_{r}$ maps $S^{t}_{p,p}B(\R)$ isomorphically onto $S^{t-r}_{
 p,p} B(\R)$ and  $\| I_{r}f|S^{t-r}_{ p,p} B(\R)\|$ is an equivalent norm in $S^{t}_{ p,p}B(\R)$.
\end{theorem}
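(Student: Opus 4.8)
The plan is to reduce the whole statement to a single assertion: for all $t,r\in\re$ and $1<p<\infty$ the operator $I_r$ maps $S^{t}_{p,p}B(\R)$ boundedly into $S^{t-r}_{p,p}B(\R)$. Granting this, apply it once more with $t,r$ replaced by $t-r,-r$, so that $I_{-r}\colon S^{t-r}_{p,p}B(\R)\to S^{t}_{p,p}B(\R)$ is bounded as well. Since the symbol $m_r(\xi):=(1+\xi_1^2)^{r/2}\cdots(1+\xi_d^2)^{r/2}$ is $C^{\infty}$ and of at most polynomial growth, multiplication by $m_r$ maps $\cs(\R)$ into itself, hence $I_r$ is a well-defined continuous self-map of $\cs'(\R)$ with $I_rI_{-r}=I_{-r}I_r=\mathrm{id}$. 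Therefore $I_r$ is an isomorphism of $S^{t}_{p,p}B(\R)$ onto $S^{t-r}_{p,p}B(\R)$ with inverse $I_{-r}$, and the norm equivalence $\|I_rf\,|\,S^{t-r}_{p,p}B(\R)\|\asymp\|f\,|\,S^{t}_{p,p}B(\R)\|$ is immediate from the two-sided boundedness.

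To prove that $I_r$ is bounded I would localise in frequency. Fix $\bar{k}=(k_1,\dots,k_d)\in\N_0^d$ and pick $\Psi_{\bar{k}}(\xi)=\Psi_{k_1}(\xi_1)\cdots\Psi_{k_d}(\xi_d)$ with $\Psi_{k_i}\in C_0^{\infty}(\re)$ equal to $1$ on $\supp\varphi_{k_i}$ and supported in a fixed dilate of $\supp\varphi_{k_i}$, so that $\Psi_{\bar{k}}\varphi_{\bar{k}}=\varphi_{\bar{k}}$ and $m_r\asymp 2^{r|\bar{k}|_1}$ on $\supp\Psi_{\bar{k}}$. Writing $f_{\bar{k}}:=\cfi[\varphi_{\bar{k}}\cf f]$ one then has
\[
\cfi[\varphi_{\bar{k}}\,\cf(I_rf)]=\cfi[m_r\,\varphi_{\bar{k}}\,\cf f]=2^{r|\bar{k}|_1}\,\cfi\bigl[g_{\bar{k}}\,\cf f_{\bar{k}}\bigr],\qquad g_{\bar{k}}:=2^{-r|\bar{k}|_1}\,m_r\,\Psi_{\bar{k}}.
\]
The crucial claim is that the Fourier multiplier operators $h\mapsto\cfi[g_{\bar{k}}\,\cf h]$ are bounded on $L_p(\R)$ with norm bounded \emph{uniformly} in $\bar{k}$. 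Granting this, $\|\cfi[\varphi_{\bar{k}}\,\cf(I_rf)]\,|\,L_p(\R)\|\ls 2^{r|\bar{k}|_1}\|f_{\bar{k}}\,|\,L_p(\R)\|$ uniformly in $\bar{k}$; raising to the $p$-th power, multiplying by $2^{(t-r)|\bar{k}|_1p}$ and summing over $\bar{k}\in\N_0^d$ yields exactly $\|I_rf\,|\,S^{t-r}_{p,p}B(\R)\|\ls\|f\,|\,S^{t}_{p,p}B(\R)\|$. So everything hinges on the uniform multiplier bound, and the rest is the $\ell_p$ triangle inequality.

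For the multiplier bound I would invoke a Michlin--H\"ormander type theorem together with the dilation invariance of its constants. After the anisotropic substitution $\xi_i=2^{k_i}\eta_i$ the rescaled symbol becomes $\prod_{i=1}^d(2^{-2k_i}+\eta_i^2)^{r/2}$ times a rescaled cutoff; on the support of the latter — where $|\eta_i|$ lies in a fixed compact subset of $(0,\infty)$ for $k_i\ge1$, and $|\eta_i|\ls1$ for $k_i=0$ — each factor $(2^{-2k_i}+\eta_i^2)^{r/2}$ is smooth with all derivatives bounded independently of $k_i$, and the cutoffs can be chosen (as dilates of one fixed bump) with the same uniformity. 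Hence the rescaled symbols satisfy the Michlin condition uniformly in $\bar{k}$, and, the Michlin seminorm being invariant under dilations, so do the $g_{\bar{k}}$ themselves; since $m_r$ is a tensor product one may equally well iterate the one-dimensional Michlin theorem one variable at a time. This uniform-in-$\bar{k}$ control of the localised symbols is the only genuinely non-routine ingredient and therefore the main obstacle. A shortcut that avoids multiplier theorems altogether is to use Proposition~\ref{tensor-r}: $I_r$ is the $d$-fold $\sigma_p$-tensor product of the classical one-dimensional lifting operator $B^{t}_{p,p}(\re)\to B^{t-r}_{p,p}(\re)$, which is a known isomorphism, so the metric mapping property of the tensor norm $\sigma_p$ gives the boundedness of $I_r$ and of $I_{-r}$ at once.
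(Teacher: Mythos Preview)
The paper does not give its own proof of this statement: Theorem~\ref{lift} is simply recalled from the literature with the reference ``see \cite[Section 2.2.6]{ST}''. So there is nothing in the paper to compare against line by line.

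That said, your argument is correct and is essentially the standard one (and presumably close to what is in \cite{ST}). The reduction to a one-sided boundedness statement via $I_rI_{-r}=\mathrm{id}$ is clean, the frequency-localised decomposition $\cfi[\varphi_{\bar{k}}\cf(I_rf)]=2^{r|\bar{k}|_1}\cfi[g_{\bar{k}}\cf f_{\bar{k}}]$ is set up properly, and the uniform Michlin--H\"ormander bound after the anisotropic rescaling $\xi_i=2^{k_i}\eta_i$ is exactly the right mechanism. Your alternative via Proposition~\ref{tensor-r} and the known one-dimensional lifting isomorphism $B^t_{p,p}(\re)\to B^{t-r}_{p,p}(\re)$ is also legitimate and arguably the quickest route here, since it reduces everything to the classical isotropic case plus the functoriality of $\sigma_p$-tensor products. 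Either way, there is no gap.
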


We proceed by introducing  Triebel-Lizorkin spaces with dominating mixed smoothness $S^t_{p,q}F(\R)$ which will be useful in our proofs of the main results. We refer  to \cite[Chapter 2]{ST}.
\begin{definition}\label{def-tr} Let $1 < p,q< \infty$. The Triebel-Lizorkin space with dominating mixed smoothness $ S^{t}_{p,q}F(\re^d)$ is the
collection of all tempered distributions $f \in \mathcal{S}'(\R)$
such that
\[
 \|\, f \, |S^{t}_{p,q}F(\R)\| :=
\Big\| \Big(\sum\limits_{\bar{k}\in \N_0^d} 2^{t|\bar{k}|_1  q}\, |\, \cfi[\varphi_{\bar{k}}\, \cf f](\, \cdot \, )|^q \Big)^{1/q} \Big|L_p(\re^d)\Big\| <\infty.
\]
\end{definition}
\begin{remark}\rm  (i) In view of Definitions \ref{def-be} and \ref{def-tr} we have  $S^t_{p,p}B(\R)=S^t_{p,p}F(\R)$.\\
(ii) If $d=1$ we obtain $
 S^{t}_{p,q} F(\re) = F^t_{p,q}(\re)   $. The classes   $F^t_{p,q}(\R)$ are the isotropic Triebel-Lizorkin spaces, we refer again to the monographs  \cite{Ni,Tr83,Tr92,Tr06}. Triebel-Lizorkin spaces with dominating mixed smoothness  have a cross-norm, i.e., if $f_i \in F^t_{p,q}(\re)$ for $ i=1, ... , d$, then we have
\[
 f(x) = \prod_{i=1}^d f_i (x_i)  \in S^t_{p,q}F(\R) \qquad\text{and}\qquad  
 \| \, f \, | S^t_{p,q}F(\R)\| = \prod_{i=1}^d \|\, f_i \, |F^t_{p,q} (\re)\| \, .
\]
(iii) Sobolev spaces with dominating mixed smoothness $S^t_pH(\R)$ represent special cases of Triebel-Lizorkin classes, i.e., $ S^{t}_{p,2}F(\R) = S^t_pH(\R)$ ($1<p<\infty$) in the sense of equivalent norms, see \cite[Theorem 2.3.1]{ST}. In case $t=0$ we get back the Littlewood-Paley assertion $L_p(\R)=S^0_{p,2}F(\R) $, see Nikol'skij \cite[1.5.6]{Ni}.
\end{remark}
Since the spaces $S^t_{p,p}B(\R)$ and $L_p(\R)$ are special cases of the classes $S^t_{p,q}F(\R)  $, from now on we will work with the scale $S^t_{p,q}F(\R)$. We now turn to the spaces on unit cube $\Omega$. For us it will be convenient to define spaces on $\Omega$ by restrictions. By $D'(\Omega)$ we denote the set of all complex-valued distributions on $\Omega$.
\begin{definition} \label{defomega}
 Let $ 1<p,q< \infty$ and $t\in \re$. Then
   $S^{t}_{p,q}F(\Omega)$ is the space of all $f\in D'(\Omega)$ such that there exists a distribution $g\in
   S^{t}_{p,q}F(\R)$ satisfying $f = g|_{\Omega}$. It is endowed with the quotient norm
   $$
      \|\, f \, |S^{t}_{p,q}F(\Omega)\| = \inf \Big\{ \|g|S^{t}_{p,q}F(\R)\|~: \quad g|_{\Omega} =
      f \Big\}\,.
   $$   
\end{definition}
\begin{remark}\rm Of course, we have $S^t_{p,p}B(\Omega)=S^t_{p,p}F(\Omega)$. For the existence of a linear extension operator from $S^t_{p,p}B(\Omega)$ into $S^t_{p,p}B(\R)$ and the intrinsic characterizations (by differences) of the spaces $S^t_{p,p}B(\Omega)$ we refer to \cite[Section 1.2.8]{Tr10}.
\end{remark}
%&&&&&&&&&&&&&&&&&&&&&&&&&&&&&&&&&&&&&&&&&&&&&&&&&&&&&&&&&&&&&&&&&&&&&&&&&&&
%&&&&&&&&&&&&&&&&&&&&&&&&&&&&&&&&&&&&&&&&&&&&&&&&&&&&&&&&&&&&&&&&&&&&&&&&&&&

\subsection{Sequence spaces related to function spaces with dominating mixed smoothness}
%&&&&&&&&&&&&&&&&&&&&&&&&&&&&&&&&&&&&&&&&&&&&&&&&&&&&&&&&&&&&&&&&&&&&&&&&&&&&&&&&&&&&&&&&&&&&&&&&&&&&&&&&&&&&&&&&&&&&&&
%&&&&&&&&&&&&&&&&&&&&&&&&&&&&&&&&&&&&&&&&&&&&&&&&&&&&&&&&&&&&&&&&&&&&&&&&&&&&&&&&&&&&&&&&&&&&&&&&&&&&&&&&&&&&&&&&&&&&&&

We first recall wavelet bases of Triebel$-$Lizorkin spaces with dominating mixed smoothness.
Let $N \in \N$. Then there exists $\psi_0, \psi_1 \in C^N(\re) $, compactly supported, 
\[
\int_{-\infty}^\infty t^m \, \psi_1 (t)\, dt =0\, , \qquad m=0,1,\ldots \, , N\, , 
\]
such that
$\{ 2^{j/2}\, \psi_{j,m}:\ j \in \N_0, \: m \in \zz\}$, where
\[
\psi_{j,m} (t):= \left\{ \begin{array}{lll}
\psi_0 (t-m) & \qquad & \mbox{if}\quad j=0, \: m \in \zz\, , 
\\
\sqrt{1/2}\, \psi_1 (2^{j-1}t-m) & \qquad & \mbox{if}\quad j\in \N\, , \: m \in \zz\, , 
            \end{array} \right.
\]
is an orthonormal basis in $L_2 (\re)$, see \cite{woj}. 
Consequently, the system
\[
\Psi_{\bar{\nu}, \bar{m}} (x) := \prod_{\ell=1}^d \psi_{\nu_\ell, m_\ell} (x_\ell)\, \qquad \bar{\nu} \in \N_0^d, \, \bar{m} \in \Z\, , 
\]
is a tensor product wavelet basis of $L_2 (\R)$. Vybiral \cite[Theorem 2.12]{Vy-06} has proved the following.

\begin{lemma}\label{wavelet}
Let $1< p,q <\infty$ and $t\in \re$. There exists $N=N(t,p,q) \in \N$ such that the mapping 
\beqq
{\mathcal W}: \quad f \mapsto (2^{|\bar{\nu}|_1}\langle f, \Psi_{\bar{\nu},\bar{m}} \rangle)_{\bar{\nu} \in \N_0^d\, , \, \bar{m} \in \Z} 
\eeqq
is an isomorphism of $S^t_{p,q}F(\R)$ onto $s^t_{p,q}f$.
\end{lemma}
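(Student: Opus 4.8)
The plan is to follow the standard wavelet-characterisation template of Frazier--Jawerth and Triebel, adapted to the dominating-mixed-smoothness scale by Vybiral \cite{Vy-06}: one proves separately that the analysis operator $\mathcal W$ is bounded from $S^t_{p,q}F(\R)$ into $s^t_{p,q}f$, that the synthesis operator $\lambda\mapsto\sum_{\bar\nu,\bar m}2^{-|\bar\nu|_1}\lambda_{\bar\nu,\bar m}\Psi_{\bar\nu,\bar m}$ is bounded from $s^t_{p,q}f$ into $S^t_{p,q}F(\R)$, and that the two are mutually inverse. The three ingredients are atomic and local-means decompositions of $S^t_{p,q}F(\R)$ (cf.\ \cite[Chapter 2]{ST}, \cite{Vy-06}), almost-diagonal (exponential-decay) estimates for the wavelet matrix, and the Fefferman--Stein vector-valued maximal inequality for $L_p(\ell_q)$, here applied iteratively over the $d$ coordinate directions.

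\emph{Boundedness of $\mathcal W$.} Decompose $f=\sum_{\bar k\in\N_0^d}\cfi[\varphi_{\bar k}\,\cf f]=:\sum_{\bar k}f_{\bar k}$, so that $f_{\bar k}$ has Fourier support in a dyadic box of sidelengths $\asymp 2^{k_\ell}$ in direction $\ell$. The number $2^{|\bar\nu|_1}\langle f_{\bar k},\Psi_{\bar\nu,\bar m}\rangle$ is a tensor of one-dimensional local means of $f_{\bar k}$: using that $\psi_0,\psi_1$ are compactly supported and $C^N$ and that $\psi_1$ has $N$ vanishing moments, one obtains coordinatewise bounds of almost-diagonal type $2^{-|\nu_\ell-k_\ell|\,N'}$ (vanishing moments when the wavelet is finer than $f_{\bar k}$, smoothness when it is coarser), with $N'$ as large as desired provided $N=N(t,p,q)$ is chosen large. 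The product over $\ell=1,\dots,d$, times a localised average of $|f_{\bar k}|$, bounds $|\langle f_{\bar k},\Psi_{\bar\nu,\bar m}\rangle|$; summing the geometric series in $\bar k$, inserting the weight $2^{t|\bar\nu|_1}$, and estimating the $s^t_{p,q}f$-norm by applying the Fefferman--Stein maximal inequality in one variable at a time (legitimate by Fubini for the mixed norm) yields $\|\,\mathcal W f\,|\,s^t_{p,q}f\,\|\ls\|\,f\,|\,S^t_{p,q}F(\R)\,\|$. This is precisely the place where the smoothness and moment order of the wavelet must be taken large relative to $t,p,q$.

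\emph{Boundedness of the synthesis operator and inversion.} For $\lambda=(\lambda_{\bar\nu,\bar m})\in s^t_{p,q}f$ put $g:=\sum_{\bar\nu,\bar m}2^{-|\bar\nu|_1}\lambda_{\bar\nu,\bar m}\Psi_{\bar\nu,\bar m}$. The renormalised wavelets $2^{-|\bar\nu|_1}\Psi_{\bar\nu,\bar m}$ are, up to a fixed constant, atoms for $S^t_{p,q}F(\R)$: they are compactly supported, $C^N$, correctly normalised, and in every coordinate with $\nu_\ell\ge1$ they inherit $N$ vanishing moments from $\psi_1$ (the coordinates with $\nu_\ell=0$, where $\psi_0$ carries no moments, are absorbed by the no-moment variant of the atomic theorem). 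The atomic decomposition theorem for $S^t_{p,q}F(\R)$ then gives convergence of the series in $\cs'(\R)$ together with $\|\,g\,|\,S^t_{p,q}F(\R)\,\|\ls\|\,\lambda\,|\,s^t_{p,q}f\,\|$, so the synthesis operator is bounded. Finally, since $\{2^{|\bar\nu|_1/2}\Psi_{\bar\nu,\bar m}\}$ is an orthonormal basis of $L_2(\R)$, analysis followed by synthesis is the identity on the dense subspace $\cs(\R)\subset S^t_{p,q}F(\R)$ and synthesis followed by analysis is the identity on finitely supported sequences; combined with the two boundedness estimates this shows that $\mathcal W$ is an isomorphism of $S^t_{p,q}F(\R)$ onto $s^t_{p,q}f$ whose inverse is the synthesis operator.

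The main obstacle is the passage from the well-understood one-dimensional estimates to the genuinely $d$-dimensional mixed statement: both the almost-diagonal decay and the Fefferman--Stein inequality must be applied separately in each of the $d$ directions, and one has to verify that the resulting nested $L_p(\ell_q(\cdots))$-type expressions collapse back to the single norm $\|\cdot\,|\,s^t_{p,q}f\,\|$ with constants that do not deteriorate with the number of iterations; the directions in which $\bar\nu$ has a vanishing entry, where only $\psi_0$ (carrying no vanishing moments) is available, require separate bookkeeping. The remaining points --- well-definedness of the pairings $\langle f,\Psi_{\bar\nu,\bar m}\rangle$ for $f\in S^t_{p,q}F(\R)$, convergence of the reconstruction series in $\cs'(\R)$, density of $\cs(\R)$ in $S^t_{p,q}F(\R)$ for $1<p,q<\infty$, and independence of the admissible choice of $\psi_0,\psi_1$ and of the resolution of unity $(\varphi_{\bar k})$ --- are routine.
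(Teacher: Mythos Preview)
The paper does not prove this lemma at all: it is stated with the sentence ``Vybiral \cite[Theorem 2.12]{Vy-06} has proved the following'' and then quoted verbatim. Your sketch is therefore not being compared against an argument in the paper but against the cited source, and what you outline --- boundedness of the analysis map via local-means/almost-diagonal estimates and iterated Fefferman--Stein, boundedness of the synthesis map via the atomic decomposition for $S^t_{p,q}F(\R)$, and inversion via $L_2$-orthonormality plus density --- is precisely the strategy Vybiral employs in \cite{Vy-06}. As a high-level outline it is correct; the only comment is that the paper treats this as a black box, so for the purposes of the present article a one-line citation would have sufficed.
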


We put
\beqq
A_{\bar{\nu}}^{\Omega}:= 
\Big\{\bar{m}\in \mathbb{Z}^d:\ \supp \Psi_{\bar{\nu},\bar{m}} \cap\Omega \neq \emptyset\Big\}\, ,\qquad \bar{\nu}\in\mathbb{N}_0^d\, .
\eeqq
For given $ f \, \in S_{p,q}^t F(\Omega)$ let $\ce f$ be an element of $ S_{p,q}^t F(\R)$ s.t. 
\[
\| \, \ce f \, | S_{p,q}^t F(\R)\| \le 2 \, \| \, f \, | S_{p,q}^t F(\Omega)\|
\qquad \mbox{and} \qquad (\ce f)_{|_\Omega} = f \, .
\]
We define
\[
g:= \sum_{\bar{\nu} \in \N_0^d} \sum_{\bar{m} \in A_{\bar{\nu}}^{\Omega}} 2^{|\bar{\nu}|_1} \, \langle \ce f, \Psi_{\bar{\nu},\bar{m}} \rangle \, \Psi_{\bar{\nu}, \bar{m}}\, .
\]
Then it follows that $g \in S_{p,q}^t F(\R)$, $g_{|_\Omega} = f $, 
\[
\supp g \subset \{x \in \R: ~ \max_{j=1, \ldots \, , d} |x_j| \le c_1\} \quad \mbox{and} \qquad 
\| \, g \, | S_{p,q}^t F(\R)\| \le c_2 \, \| \, f \, | S_{p,q}^t F(\Omega)\|\, .
\]
Here $c_1,c_2$ are independent of $f$. For this reason we define the following sequence spaces

\begin{definition}
Let $1<p,q<\infty$ and $t\in \mathbb{R}$.\\
{\rm (i)} If 
$$
\lambda=\lbrace \lambda_{\bar{\nu},\bar{m}}\in\mathbb{C}:\bar{\nu}\in \mathbb{N}_0^d,\ \bar{m}\in A_{\bar{\nu}}^{\Omega} \rbrace \, ,
$$
then we define
\[
s_{p,q}^{t,\Omega}f :=\Big\lbrace\lambda: \| \lambda|s_{p,q}^{t,\Omega}f\| =\Big\|\Big(\sum_{\bar{\nu}\in \mathbb{N}_0^d}\sum_{\bar{m}\in A_{\bar{\nu}}^{\Omega}}|2^{|\bar{\nu}|_1 t}\lambda_{\bar{\nu},\bar{m}}
\chi_{\bar{\nu},\bar{m}}(\cdot) |^q\Big)^{\frac{1}{q}}\Big|L_p(\mathbb{R}^d)\Big\|<\infty \Big\rbrace\, .
\]
{\rm (ii)} If $\mu\in \N_0$ and 
$$
\lambda=\lbrace \lambda_{\bar{\nu},\bar{m}}\in\mathbb{C}:\bar{\nu}\in \mathbb{N}_0^d,\ |\bar{\nu}|_1=\mu,\ \bar{m}\in A_{\bar{\nu}}^{\Omega} \rbrace \, ,
$$
then we define
$$(s_{p,q}^{t,\Omega}f)_{\mu}=\Big\lbrace\lambda: \| \lambda|(s_{p,q}^{t,\Omega}f)_{\mu}\| =
\Big\|\Big(\sum_{|\bar{\nu}|_1=\mu}\sum_{\bar{m}\in A_{\bar{\nu}}^{\Omega}}|2^{|\bar{\nu}|_1t}\lambda_{\bar{\nu},\bar{m}}\chi_{\bar{\nu},\bar{m}}(\cdot) 
|^q\Big)^{\frac{1}{q}}\Big|L_p(\mathbb{R}^d)\Big\|<\infty \Big\rbrace \, .$$

\end{definition}

Later on we shall need the following lemmas, see \cite{Hansen,KiSi,Vy-06}.

\begin{lemma}\label{ba1}
{\rm (i)} Let $\bar{\nu}\in \mathbb{N}_0^d$ and $\mu\in \mathbb{N}_0$. Then we have
$$ \#(A_{\bar{\nu}}^{\Omega})\asymp 2^{|\bar{\nu}|_1}\qquad\text{and}\qquad D_{\mu}=\sum_{|\bar{\nu}|_1=\mu}\#(A_{\bar{\nu}}^{\Omega})\asymp \mu^{d-1}2^{\mu}. \ $$
The equivalence constants do not depend on $\mu\in \mathbb{N}_0$.\\
{\rm (ii)} Let $1<p< \infty$ and $t\in \mathbb{R}$. Then
$$(s_{p,p}^{t,\Omega}f)_{\mu}=2^{\mu(t-\frac{1}{p})}\ell_p^{D_{\mu}}\,,\qquad \mu\in \mathbb{N}_0\,.$$
\end{lemma}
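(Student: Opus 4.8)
The plan is to prove the two parts separately, treating part (i) as a geometric counting problem and part (ii) as an identification of weighted sequence spaces via the disjoint-support structure of the wavelets on a fixed dyadic level.

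For part (i), first I would recall that the one-dimensional wavelets $\psi_{\nu_\ell,m_\ell}$ are compactly supported with $\supp\psi_{\nu_\ell,m_\ell}\subset 2^{-\nu_\ell}([-c,c]+m_\ell)$ for some fixed $c$ depending only on the mother wavelets $\psi_0,\psi_1$. Hence $\Psi_{\bar\nu,\bar m}$ has support in a box of side lengths $\asymp 2^{-\nu_1},\dots,2^{-\nu_d}$, and the condition $\supp\Psi_{\bar\nu,\bar m}\cap\Omega\ne\emptyset$ forces each $m_\ell$ to lie in an interval of length $\asymp 2^{\nu_\ell}$ (with an additive $O(1)$ coming from the size of the support and from the fact that $\Omega=(0,1)^d$). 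Multiplying over $\ell=1,\dots,d$ and using $|\bar\nu|_1=\nu_1+\dots+\nu_d$ gives $\#(A_{\bar\nu}^\Omega)\asymp 2^{\nu_1}\cdots 2^{\nu_d}=2^{|\bar\nu|_1}$, with constants independent of $\bar\nu$. For the second assertion I would sum over all $\bar\nu\in\N_0^d$ with $|\bar\nu|_1=\mu$: since each term is $\asymp 2^\mu$ and the number of such $\bar\nu$ is $\#\{\bar\nu\in\N_0^d:|\bar\nu|_1=\mu\}=\binom{\mu+d-1}{d-1}\asymp\mu^{d-1}$, we obtain $D_\mu\asymp\mu^{d-1}2^\mu$. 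The uniformity of constants is clear because the per-level bounds are uniform.

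For part (ii), I would fix $\mu\in\N_0$ and unwind the definition of $(s_{p,p}^{t,\Omega}f)_\mu$ with $q=p$. Because the $L_p$-norm and the $\ell_p$-sum interchange when $q=p$, the quasi-norm becomes
\[
\|\lambda|(s_{p,p}^{t,\Omega}f)_\mu\|=\Big(\sum_{|\bar\nu|_1=\mu}\sum_{\bar m\in A_{\bar\nu}^\Omega}2^{\mu t p}|\lambda_{\bar\nu,\bar m}|^p\,\|\chi_{\bar\nu,\bar m}|L_p(\R)\|^p\Big)^{1/p}.
\]
The characteristic function $\chi_{\bar\nu,\bar m}$ is (up to normalization) the indicator of the support cube of $\Psi_{\bar\nu,\bar m}$, so $\|\chi_{\bar\nu,\bar m}|L_p(\R)\|^p\asymp 2^{-|\bar\nu|_1}=2^{-\mu}$, uniformly in $\bar\nu,\bar m$ on the level $|\bar\nu|_1=\mu$. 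Substituting, the weight in front of $|\lambda_{\bar\nu,\bar m}|^p$ is $\asymp 2^{\mu t p}2^{-\mu}=2^{\mu p(t-1/p)}$, so after collecting all indices $(\bar\nu,\bar m)$ with $|\bar\nu|_1=\mu$, $\bar m\in A_{\bar\nu}^\Omega$ — of which there are exactly $D_\mu$ by definition — we get $\|\lambda|(s_{p,p}^{t,\Omega}f)_\mu\|\asymp 2^{\mu(t-1/p)}\|\lambda|\ell_p^{D_\mu}\|$. This is precisely the claimed identity $(s_{p,p}^{t,\Omega}f)_\mu=2^{\mu(t-1/p)}\ell_p^{D_\mu}$ in the sense of equivalent norms.

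The only genuinely delicate point — and the step I would present most carefully — is the estimate $\|\chi_{\bar\nu,\bar m}|L_p(\R)\|^p\asymp 2^{-|\bar\nu|_1}$ together with the fact that the argument really does not need \emph{disjointness} of the supports but only that each point of $\R$ is covered by a bounded number of the cubes on a fixed level (finite overlap), which is what makes the interchange of sums and the passage to $\ell_p^{D_\mu}$ legitimate with dimension-free constants; one should note that $\chi_{\bar\nu,\bar m}$ here denotes the $L_\infty$-normalized indicator (value $1$ on the support cube), which is the standard convention in this wavelet-sequence-space setting, so the volume factor $2^{-|\bar\nu|_1}$ enters exactly once. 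Everything else is bookkeeping. Since both parts are well documented, I would also simply cite \cite{Hansen,Vy-06} for the details and present the above as the guiding computation.
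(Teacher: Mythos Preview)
Your argument is correct and is exactly the standard computation found in the references \cite{Hansen,KiSi,Vy-06}; the paper itself does not prove Lemma~\ref{ba1} but merely cites those sources. Two minor points of precision: in the usual convention (see \cite{Vy-06}) $\chi_{\bar\nu,\bar m}$ is the characteristic function of the dyadic rectangle $\prod_{\ell=1}^d[2^{-\nu_\ell}m_\ell,2^{-\nu_\ell}(m_\ell+1))$, not of the wavelet support, so $\|\chi_{\bar\nu,\bar m}|L_p(\R)\|^p=2^{-|\bar\nu|_1}$ holds exactly and part~(ii) is a genuine equality of norms rather than an equivalence; and because $q=p$, the interchange of the $L_p$-integral with the inner $\ell_p$-sum is pure Fubini and needs neither disjointness nor bounded overlap of the cubes, so your closing caveat about finite overlap is harmless but unnecessary.
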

\begin{lemma}\label{ba2}
{\rm (i)} Let $1<p_1,p_2,q_1,q_2<\infty$ and $t\in \mathbb{R}$. Then
\[
\| \, id^*_{\mu} \, : (s^{t,\Omega}_{p_1,q_1}f)_\mu \to (s^{0,\Omega}_{p_2,q_2}f)_\mu\|
\lesssim 2^{\mu\big(-t+(\frac{1}{p_1}-\frac{1}{p_2})_+\big)}\mu^{(d-1)(\frac{1}{q_2} - \frac{1}{q_1})_+}
\]
with a constant behind $\lesssim$ independent of $\mu\in \mathbb{N}_0$.\\
{\rm (ii)} Let $1<p_1<p_2<\infty$, $1<q_1,q_2< \infty$ and $t \in \re$. Then
$$ \|id^*_{\mu}: (s_{p_1,q_1}^{t,\Omega}f)_{\mu}\to (s_{p_2,q_2}^{0,\Omega}f)_{\mu} \| \lesssim 2^{\mu(-t+\frac{1}{p_1}-\frac{1}{p_2})}$$
with a constant behind $\lesssim$ independent of $\mu\in \mathbb{N}_0$.
\end{lemma}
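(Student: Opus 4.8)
The plan is to work entirely on the level of sequence spaces and to exploit the explicit structure of $(s^{t,\Omega}_{p,q}f)_\mu$. Since $|\bar{\nu}|_1=\mu$ is fixed, the weight $2^{|\bar{\nu}|_1 t}=2^{\mu t}$ is constant, so $\|\lambda\mid (s^{t,\Omega}_{p,q}f)_\mu\|=2^{\mu t}\|\lambda\mid (s^{0,\Omega}_{p,q}f)_\mu\|$ and consequently $\|id^*_\mu:(s^{t,\Omega}_{p_1,q_1}f)_\mu\to(s^{0,\Omega}_{p_2,q_2}f)_\mu\|=2^{-\mu t}\,\|id^*_\mu:(s^{0,\Omega}_{p_1,q_1}f)_\mu\to(s^{0,\Omega}_{p_2,q_2}f)_\mu\|$. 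Hence it suffices to treat $t=0$. Writing $g_{\bar{\nu}}:=\sum_{\bar{m}\in A^\Omega_{\bar{\nu}}}|\lambda_{\bar{\nu},\bar{m}}|\,\chi_{\bar{\nu},\bar{m}}$, one has $\|\lambda\mid (s^{0,\Omega}_{p,q}f)_\mu\|=\big\|\big(\sum_{|\bar{\nu}|_1=\mu}g_{\bar{\nu}}^q\big)^{1/q}\,\big|\,L_p(\re^d)\big\|$. For fixed $\bar{\nu}$ the characteristic functions $\chi_{\bar{\nu},\bar{m}}$ have essentially disjoint supports, each of measure $\asymp 2^{-\mu}$, so $\|g_{\bar{\nu}}\,|\,L_p\|\asymp 2^{-\mu/p}\,\|(\lambda_{\bar{\nu},\bar{m}})_{\bar{m}}\,|\,\ell_p^{\#A^\Omega_{\bar{\nu}}}\|$; moreover, by Lemma \ref{ba1}(i) there are only $K_\mu\asymp\mu^{d-1}$ indices $\bar{\nu}$ with $|\bar{\nu}|_1=\mu$, and all the $g_{\bar{\nu}}$ (and the functions $(\sum g_{\bar{\nu}}^q)^{1/q}$) are supported in a set of measure $\asymp 1$.

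Two elementary observations carry most of the proof. First, $\|x\mid\ell_q\|$ is non-increasing in $q$ and $\|x\mid\ell_{q_2}^{K_\mu}\|\le K_\mu^{(1/q_2-1/q_1)_+}\,\|x\mid\ell_{q_1}^{K_\mu}\|$ by Hölder; applying this pointwise to the vector $(g_{\bar{\nu}})_{|\bar{\nu}|_1=\mu}$ gives
\[
\|id^*_\mu:(s^{0,\Omega}_{p,q_1}f)_\mu\to(s^{0,\Omega}_{p,q_2}f)_\mu\|\ls \mu^{(d-1)(1/q_2-1/q_1)_+}.
\]
Second, if $p_2\le p_1$ then Hölder's inequality on the (measure $\asymp 1$) support yields $\|id^*_\mu:(s^{0,\Omega}_{p_1,q}f)_\mu\to(s^{0,\Omega}_{p_2,q}f)_\mu\|\ls 1$. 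Composing these two maps and using the multiplicativity of $s$-numbers already proves part (i) in the case $p_1\ge p_2$.

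It remains to prove part (ii): for $p_1<p_2$ (and any $q_1,q_2$), $\|\lambda\mid (s^{0,\Omega}_{p_2,q_2}f)_\mu\|\ls 2^{\mu(1/p_1-1/p_2)}\,\|\lambda\mid (s^{0,\Omega}_{p_1,q_1}f)_\mu\|$; part (i) for $p_1<p_2$ then follows at once because $2^{\mu(1/p_1-1/p_2)}\le 2^{\mu(1/p_1-1/p_2)}\mu^{(d-1)(1/q_2-1/q_1)_+}$. Since $\|\lambda\mid (s^{0,\Omega}_{p_1,q_1}f)_\mu\|\ge \big\|\max_{\bar{\nu}}g_{\bar{\nu}}\,\big|\,L_{p_1}\big\|$ and $\|\lambda\mid (s^{0,\Omega}_{p_2,q_2}f)_\mu\|\le \big\|\sum_{\bar{\nu}}g_{\bar{\nu}}\,\big|\,L_{p_2}\big\|$ for all $q_1,q_2\in(1,\infty)$, it is enough to show
\[
\Big\|\sum_{|\bar{\nu}|_1=\mu}g_{\bar{\nu}}\ \Big|\ L_{p_2}\Big\|\ \ls\ 2^{\mu(1/p_1-1/p_2)}\,\Big\|\max_{|\bar{\nu}|_1=\mu}g_{\bar{\nu}}\ \Big|\ L_{p_1}\Big\|.
\]
The factor $2^{\mu(1/p_1-1/p_2)}$ is produced by the per-level estimate $\|g_{\bar{\nu}}\,|\,L_{p_2}\|\ls 2^{\mu(1/p_1-1/p_2)}\|g_{\bar{\nu}}\,|\,L_{p_1}\|$, which uses only that $\ell_{p_1}^{\#A^\Omega_{\bar{\nu}}}\hookrightarrow \ell_{p_2}^{\#A^\Omega_{\bar{\nu}}}$ has norm $1$ (no polynomial loss). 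The difficulty — and I expect this to be the main obstacle — is that there are $K_\mu\asymp\mu^{d-1}$ levels, so a crude bound like $\sum_{\bar{\nu}}g_{\bar{\nu}}\le K_\mu\max_{\bar{\nu}}g_{\bar{\nu}}$ loses a full factor $\mu^{d-1}$. The remedy is geometric: two distinct multi-indices with the same $|\cdot|_1$-value are never comparable in the coordinatewise order, hence the dyadic rectangles $Q_{\bar{\nu},\bar{m}}$ at the various levels $|\bar{\nu}|_1=\mu$ have pairwise incompatible aspect ratios; therefore, if a point lies in $j$ of them simultaneously, their intersection has volume at most $2^{-\mu-c_dj^{1/(d-1)}}$ — because the admissible levels inside a coordinate box of $\ell_1$-size $\mu+k$ number at most $\binom{k+d-1}{d-1}$, so $j\le \binom{k+d-1}{d-1}$. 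Feeding this overlap estimate into the layer-cake decomposition of $\int_{\re^d}\big(\sum_{\bar{\nu}}g_{\bar{\nu}}\big)^{p_2}$ shows that the contribution of high-multiplicity points decays (sub)exponentially in $j$, which is precisely what absorbs the $\asymp\mu^{d-1}$ levels and closes the estimate with no logarithmic loss. This overlap bookkeeping — possibly after reducing to $\lambda$ with $\{0,1\}$-valued entries — is the delicate point; everything else reduces to routine uses of Minkowski's inequality, Hölder's inequality, and the monotonicity of $\ell_q$-(quasi)norms.
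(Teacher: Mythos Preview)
Your treatment of part~(i) in the regime $p_1\ge p_2$ is correct and is essentially the standard argument: pull out the constant weight $2^{\mu t}$, apply H\"older pointwise in the inner $\ell_q$-norm over $K_\mu\asymp\mu^{d-1}$ indices, and then apply H\"older in $L_p$ over a domain of measure $\asymp 1$. The paper does not give its own proof of this lemma; it cites \cite{Hansen,KiSi,Vy-06}, and the argument there for this regime is the same as yours.

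For part~(ii), however, your sketch has a genuine gap. Your reduction to the endpoint inequality
\[
\Big\|\sum_{|\bar{\nu}|_1=\mu}g_{\bar{\nu}}\,\Big|\, L_{p_2}\Big\|\ \lesssim\ 2^{\mu(1/p_1-1/p_2)}\,\Big\|\max_{|\bar{\nu}|_1=\mu}g_{\bar{\nu}}\,\Big|\, L_{p_1}\Big\|
\]
is legitimate, but the overlap mechanism you propose does not prove it. The geometric fact you state --- that $j$ dyadic rectangles from $j$ distinct levels $\bar{\nu}$ with $|\bar{\nu}|_1=\mu$ intersect in a set of volume at most $2^{-\mu-c_dj^{1/(d-1)}}$ --- is correct, but it does \emph{not} imply any decay of $|\{x:\sum_{\bar{\nu}} g_{\bar{\nu}}(x)\ge j\}|$ in $j$. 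Take $\lambda_{\bar{\nu},\bar{m}}=1$ for all $\bar{\nu},\bar{m}$, so that each $g_{\bar{\nu}}\equiv 1$ on $\Omega$; then $\sum_{\bar{\nu}} g_{\bar{\nu}}\equiv K_\mu\asymp\mu^{d-1}$ and the superlevel set $\{\sum g_{\bar{\nu}}\ge j\}$ equals all of $\Omega$ for every $j\le K_\mu$ --- there is no decay whatsoever. The individual rectangle intersections are small, but each level carries $\asymp 2^\mu$ rectangles, and the union over all choices of $\bar{m}$ can fill $\Omega$. So the layer-cake argument you describe cannot be the reason the endpoint inequality holds (the inequality does hold in this example, since $\mu^{d-1}\lesssim 2^{\mu(1/p_1-1/p_2)}$, but for a completely different reason). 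Your hoped-for reduction to $\{0,1\}$-valued coefficients is also left unsupported and is itself nontrivial.

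The proofs in the cited references take a different route: they avoid the hard endpoint $(q_1,q_2)=(\infty,1)$ altogether and instead pass through the diagonal spaces $(s^{0,\Omega}_{p,p}f)_\mu=2^{-\mu/p}\ell_p^{D_\mu}$ of Lemma~\ref{ba1}(ii), or equivalently use Jawerth--Franke-type sequence-space embeddings adapted to the mixed setting. In those arguments the $q$-index is only ever moved in the monotone direction (so no $\mu^{d-1}$ loss), while the $p$-index is changed via the inner $\ell_{p_1}\hookrightarrow\ell_{p_2}$ embedding on each block, which carries exactly the factor $2^{\mu(1/p_1-1/p_2)}$; Minkowski's inequality is used to interchange the $L_p(\ell_q)$ and $\ell_q(L_p)$ orderings when the exponents permit, and an intermediate index $p_1<p<p_2$ handles the remaining parameter constellations. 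This is both simpler and avoids the combinatorial bookkeeping you anticipate.
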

%\section{Weyl and Bernstein numbers of embeddings $id^*_{\mu}: (s_{p_1,2}^{t,\Omega}f)_{\mu}\to (s_{p_2,2}^{0,\Omega}f)_{\mu}$}

%&&&&&&&&&&&&&&&&&&&&&&&&&&&&&&&&&&&&&&&&&&&&&&&&&&&&&&&&&&&&&&&&&&&&&&&&&&&&&&&&&&&&&&&&&&&&&&&&&&&&&&&&&&&&&&&&&&&&&&
%&&&&&&&&&&&&&&&&&&&&&&&&&&&&&&&&&&&&&&&&&&&&&&&&&&&&&&&&&&&&&&&&&&&&&&&&&&&&&&&&&&&&&&&&&&&&&&&&&&&&&&&&&&&&&&&&&&&&&&

%&&&&&&&&&&&&&&&&&&&&&&&&&&&&&&&&&&&&&&&&&&&&&&&&&&&&&&&&&&&&&&&&&&&&&&&&&&&&&&&&&&&&&&&&&&&&&&&&&&&&&&&&&&&&&&&&&&&&&&
%&&&&&&&&&&&&&&&&&&&&&&&&&&&&&&&&&&&&&&&&&&&&&&&&&&&&&&&&&&&&&&&&&&&&&&&&&&&&&&&&&&&&&&&&&&&&&&&&&&&&&&&&&&&&&&&&&&&&&&

\section{Proofs}
\label{proof}

The proof is in some sense standard.  
By means of wavelet characterizations of Triebel$-$Lizorkin spaces we switch from the consideration of Gelfand numbers of  the embedding ${\rm App}: S^t_{p_1,p_1}B(\Omega) \to L_{p_2}(\Omega)$
to the Gelfand numbers of $id^*: s^{t, \Omega}_{p_1,p_1} f \to s^{0,\Omega}_{p_2,2}f$. 
Next,  we reduce the problem to the estimate of $c_{n}(id_{\mu}^*: (s^{t, \Omega}_{p_1,p_1} f)_{\mu} \to (s^{0,\Omega}_{p_2,2}f)_{\mu})$. In a further reduction step estimates of $c_{n_{\mu}}(id_{\mu}^*)$ are traced back to estimates of 
 $c_{n}(id_{p_1,p_2}^{D_\mu})$, see \eqref{idlp} for this notion. All what is needed about these numbers is collected in the following lemma.
 \begin{lemma}\label{gel}
 Let $1\leq p_1,p_2\leq \infty$ and $1\leq n\leq m<\infty$. Then if $p_1>1 $ we have
 \begin{subnumcases}{c_n(id_{p_1,p_2}^{m})\asymp}
 (m-n+1)^{\frac{1}{p_2}-\frac{1}{p_1}}&\text{if } $1\leq p_2\leq  p_1  $, \label{wth1}  \\
 \Big(\min\{1,m^{1-\frac{1}{p_1}}n^{-\frac{1}{2}}\}\Big)^{\frac{1/p_1-1/p_2}{1/p_1-1/2} }&\text{if } $1< p_1<  p_2\leq 2$,\label{wth2}   \\
 \max\Big\{m^{\frac{1}{p_2}-\frac{1}{p_1}}, \big(\sqrt{1-n/m}\,\big)^{\frac{1/p_1-1/p_2}{1/2-1/p_2}}\Big\}&\text{if } $2\leq p_1< p_2 $,\nonumber \\
  \max\Big\{m^{\frac{1}{p_2}-\frac{1}{p_1}},\min\{1,m^{1-\frac{1}{p_1}}n^{-\frac{1}{2}}\}  \sqrt{1-n/m}  \Big\}&\text{if } $1< p_1\leq 2<p_2$. \label{wth4}
 \end{subnumcases}
 \end{lemma}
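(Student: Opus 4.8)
\noindent The estimates collected in Lemma~\ref{gel} are a compendium of (essentially) known facts on Gelfand numbers of finite-dimensional identities, so the plan is to assemble them from three ingredients: (a)~the elementary contractive case, treated by restriction to a coordinate subspace for the upper bound and a volume (section) argument for the lower bound; (b)~the classical width estimates for Euclidean balls going back to Kashin, Garnaev--Gluskin and Gluskin; and (c)~the multiplicativity of $s$-numbers (axioms (b),(c)) together with the log-convexity of the $\ell_p$-norms, which interpolates a general pair $(p_1,p_2)$ from endpoint situations in which one index is $2$. Throughout I would use the duality \eqref{dual}, $c_n(T)=d_n(T')$, to move between $c_n(id_{p_1,p_2}^m)$ and Kolmogorov widths of $\ell$-balls, and I would follow Vybíral's treatment \cite{Vy-08} of the isotropic analogue as a blueprint, since exactly the same finite-dimensional lemma is proved there.

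\emph{Step 1 (the contractive case $1\le p_2\le p_1$, i.e.\ \eqref{wth1}).} For the upper bound take $M=\{x:x_1=\dots=x_{n-1}=0\}$, which has codimension $n-1$; Hölder on the remaining $m-n+1$ coordinates gives $\|x|\ell_{p_2}^m\|\le(m-n+1)^{1/p_2-1/p_1}\|x|\ell_{p_1}^m\|$ for $x\in M$, hence $c_n(id_{p_1,p_2}^m)\le(m-n+1)^{1/p_2-1/p_1}$. For the matching lower bound let $M$ be an arbitrary subspace of codimension $n-1$ and $k=m-n+1=\dim M$; a standard volume comparison for central sections of $\ell_p$-balls (or directly \cite{Pi-87,Pin85}) produces an $x\in M$ with $\|x|\ell_{p_2}^m\|\gtrsim k^{1/p_2-1/p_1}\|x|\ell_{p_1}^m\|$.

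\emph{Step 2 (the expansive cases $p_1<p_2$).} Two cheap lower bounds hold for every $1\le n\le m$: $c_n(id_{p_1,p_2}^m)\le\|id_{p_1,p_2}^m\|=1$, and, since the $m$-th Gelfand number amounts to choosing a line $M=\Span u$, $c_n(id_{p_1,p_2}^m)\ge c_m(id_{p_1,p_2}^m)=\inf_{u\ne0}\|u|\ell_{p_2}^m\|/\|u|\ell_{p_1}^m\|=m^{1/p_2-1/p_1}$, attained at the flat vector; the term $m^{1/p_2-1/p_1}$ in \eqref{wth4} and in the range $2\le p_1<p_2$ is precisely this floor. The substance is the two one-endpoint families
\[
c_n(id_{p,2}^m)\asymp\min\{1,\,m^{1/p'}n^{-1/2}\}\quad(1<p\le2),\qquad
c_n(id_{2,q}^m)\asymp\max\{m^{1/q-1/2},\,\sqrt{1-n/m}\,\}\quad(2\le q<\infty),
\]
which are classical (Kashin; Garnaev--Gluskin; Gluskin) and, dually, are $d_n(id_{2,p'}^m)$ and $d_n(id_{q',2}^m)$; here the hypothesis $p>1$ is exactly what removes the logarithmic factor present at the $\ell_1$-endpoint. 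Writing $\theta=\dfrac{1/p_1-1/p_2}{1/p_1-1/2}$, so that $\tfrac1{p_2}=\tfrac{1-\theta}{p_1}+\tfrac{\theta}{2}$, Hölder gives $\|x|\ell_{p_2}^m\|\le\|x|\ell_{p_1}^m\|^{1-\theta}\|x|\ell_2^m\|^{\theta}$ on every subspace, whence $c_n(id_{p_1,p_2}^m)\le c_n(id_{p_1,2}^m)^{\theta}$; for $p_2\le2$ this is already the upper bound in \eqref{wth2}. The matching lower bound, and the two-sided estimates in \eqref{wth4} and in the range $2\le p_1<p_2$, then follow by combining the Euclidean-section/random-subspace lower bounds with axioms (b),(c) applied to the factorizations $id_{p_1,p_2}^m=id_{2,p_2}^m\circ id_{p_1,2}^m$ and its dual, again following \cite{Vy-08} (see also \cite{Pin85}).

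\emph{The main obstacle} is not an individual case but making every estimate \emph{uniform in both $n$ and $m$} and reproducing exactly the $\min\{1,\cdot\}$, $\max\{\cdot,\cdot\}$, $\sqrt{1-n/m}$ shape. The naive factorization bounds are never simultaneously sharp: in \eqref{wth4} the composition $id_{2,p_2}^m\circ id_{p_1,2}^m$ overshoots because the ``bad'' directions of the two factors must be controlled jointly, so the genuine upper bound needs the full Euclidean-section (volume) method rather than a product of operator norms; and in \eqref{wth2} the honest lower bound is strictly stronger than the trivial $c_n(id_{p_1,p_2}^m)\ge c_n(id_{p_1,2}^m)$ and requires the probabilistic lower bounds for widths of $\ell_p$-balls. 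Verifying that the logarithms genuinely cancel once $p_1>1$ is the delicate bookkeeping point.
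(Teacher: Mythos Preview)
Your proposal is correct but far more detailed than what the paper does: the paper treats Lemma~\ref{gel} as a known result and simply writes ``For the proof of Lemma~\ref{gel} we refer to Gluskin~\cite{Glus}'', with no further argument. Your reconstruction from classical ingredients---the elementary section argument for the contractive case, the Kashin/Garnaev--Gluskin/Gluskin endpoint estimates for $c_n(id_{p,2}^m)$ and $c_n(id_{2,q}^m)$, the H\"older interpolation $\|x|\ell_{p_2}\|\le\|x|\ell_{p_1}\|^{1-\theta}\|x|\ell_2\|^{\theta}$ for the upper bound in \eqref{wth2}, and the duality $c_n(T)=d_n(T')$ throughout---is the standard way to assemble Gluskin's theorem and is essentially how Vyb\'{\i}ral~\cite{Vy-08} packages it. What you gain is a self-contained roadmap and an explicit explanation of why the hypothesis $p_1>1$ removes the logarithm; what the paper's bare citation buys is brevity, which is appropriate since the lemma is a tool and Gluskin's paper contains exactly these asymptotics. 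Your identification of the genuine difficulty---that the naive factorization $id_{p_1,p_2}^m=id_{2,p_2}^m\circ id_{p_1,2}^m$ is not sharp in \eqref{wth4} and that the honest lower bounds require the random-subspace method---is accurate and is precisely the content of~\cite{Glus}.
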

For the proof of Lemma \ref{gel} we refer to Gluskin \cite{Glus}. The heart of the matter consists in the following assertion.
\begin{lemma}\label{weyl}
Let $1 < p_1 ,p_2< \infty$ and $t\in \re$. Then
\[
 c_n\big({\rm App}: S_{p_1,p_1}^t B(\Omega)\to L_{p_2}(\Omega)\big) \asymp c_n( id^*: s_{p_1,p_1}^{t,\Omega}f \to s_{p_2,2}^{0,\Omega}f)
\]
holds for all $n \in \N$.
\end{lemma}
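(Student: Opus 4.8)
The plan is to transfer the Gelfand numbers of the function-space embedding to the level of sequence spaces via the wavelet isomorphism, and then to deal with the fact that $L_{p_2}(\Omega)$ is defined by restriction while the natural sequence space lives on all of $\R$. First I would recall from Lemma \ref{wavelet} that $\mathcal W$ is an isomorphism of $S^t_{p_1,p_1}F(\R)$ onto $s^t_{p_1,p_1}f$ and of $L_{p_2}(\R)=S^0_{p_2,2}F(\R)$ onto $s^0_{p_2,2}f$; since Gelfand numbers (being $s$-numbers, cf.\ property (c)) are multiplicative with respect to isomorphisms, we get $c_n({\rm App}: S^t_{p_1,p_1}B(\R)\to L_{p_2}(\R))\asymp c_n(\mathcal W\,{\rm App}\,\mathcal W^{-1})$, and the latter operator is, up to equivalence of norms, the identity $s^t_{p_1,p_1}f\to s^0_{p_2,2}f$. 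The point, however, is that $\Omega=(0,1)^d$ is bounded, so only the finitely-supported part of the wavelet expansion matters.

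Next I would use the extension construction preceding the definition of $s^{t,\Omega}_{p,q}f$: for $f\in S^t_{p_1,p_1}F(\Omega)$ one builds $g\in S^t_{p_1,p_1}F(\R)$ with $g|_\Omega=f$, $\|g\,|\,S^t_{p_1,p_1}F(\R)\|\lesssim\|f\,|\,S^t_{p_1,p_1}F(\Omega)\|$, and $g$ having wavelet coefficients supported on $\bigcup_{\bar\nu}A^\Omega_{\bar\nu}$; conversely the restriction operator $S^t_{p_1,p_1}F(\R)\to S^t_{p_1,p_1}F(\Omega)$ is bounded. Composing, one factors ${\rm App}:S^t_{p_1,p_1}B(\Omega)\to L_{p_2}(\Omega)$ through $id^*:s^{t,\Omega}_{p_1,p_1}f\to s^{0,\Omega}_{p_2,2}f$ and vice versa: writing $E$ for the (bounded, linear) extension-to-sequence map, $R$ for the restriction map, and $S:s^{0,\Omega}_{p_2,2}f\to L_{p_2}(\Omega)$ for the synthesis of a finitely-supported coefficient sequence into a function on $\Omega$ (bounded because $L_{p_2}=S^0_{p_2,2}F$ and the wavelet system is a basis), one has ${\rm App}=S\circ id^*\circ E$ and, going the other way, $id^*=(\text{analysis})\circ{\rm App}\circ(\text{synthesis})$ with bounded outer factors. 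By property (c) of $s$-numbers these two factorizations give $c_n({\rm App})\lesssim c_n(id^*)$ and $c_n(id^*)\lesssim c_n({\rm App})$, hence the claimed equivalence for all $n\in\N$.

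The main obstacle is making the two "synthesis" and "analysis" maps genuinely bounded and mutually inverse on the relevant subspaces, i.e.\ checking that restricting attention to the index set $A^\Omega_{\bar\nu}$ does not spoil the wavelet characterization: one must verify that for a coefficient sequence supported in $\bigcup_{\bar\nu}A^\Omega_{\bar\nu}$ the $S^t_{p_1,p_1}F(\R)$-norm of the reconstructed function is equivalent to the $s^{t,\Omega}_{p_1,p_1}f$-norm, and symmetrically on the target side with $p_2,2$ and $t=0$; this uses the local (compact) support of the $\Psi_{\bar\nu,\bar m}$ together with $\#A^\Omega_{\bar\nu}\asymp 2^{|\bar\nu|_1}$ from Lemma \ref{ba1}(i), and that a function on $\Omega$ only "sees" finitely many scales' worth of boundary-overlapping wavelets at each level. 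Once this bookkeeping is in place the $s$-number axioms do the rest; I expect the argument to be routine but notationally heavy, which is presumably why the authors call the proof "in some sense standard."
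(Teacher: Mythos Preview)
Your proposal is correct and follows essentially the same approach as the paper: identify $L_{p_2}(\Omega)=S^0_{p_2,2}F(\Omega)$ via Littlewood--Paley, then use the wavelet isomorphism of Lemma~\ref{wavelet} together with bounded extension/restriction operators and property~(c) of $s$-numbers to factor ${\rm App}$ through $id^*$ and vice versa. The paper's own proof is in fact much terser---it records only the two key identifications and then defers the factorization details to the analogous Weyl-number argument in \cite[Lemma~7.1]{KiSi}---so what you have written is effectively an unpacking of that reference.
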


\begin{proof} From the Littewood-Paley assertion $S_{p_2,2}^{0} F (\Omega) = L_{p_2} (\Omega)$ ($1<p_2<\infty$), see \cite[1.5.6]{Ni}, we have
\[
c_n\big({\rm App}: S_{p_1,p_1}^t B(\Omega)\to L_{p_2}(\Omega)\big) \asymp c_n\big({\rm App}: S_{p_1,p_1}^t B(\Omega)\to S_{p_2,2}^{0} F (\Omega)\big).
\]
Lemma \ref{wavelet} and property (c) of the $s$-numbers yield
$$c_n\big({\rm App}: S_{p_1,p_1}^t B(\Omega)\to S_{p_2,2}^{0} F (\Omega)\big) \asymp c_n( id^*: s_{p_1,p_1}^{t,\Omega}f \to s_{p_2,2}^{0,\Omega}f),$$
see also a related proof for Weyl numbers in \cite[Lemma 7.1]{KiSi}. From this the claim follows.
\end{proof}
\subsection{Gelfand numbers of embeddings of sequence spaces}
As the consequence of Lemma \ref{weyl}, in the following we shall deal with the behaviour of Gelfand numbers of the identity mapping
\[ 
id^*: \quad s_{p_1,p_1}^{t,\Omega}f\to s_{p_2,2}^{0,\Omega}f \, .
\]
To get a lower bound we use the following lemma.
\begin{lemma}\label{low}
For all $\mu \in \N_0$ and all $n \in \N$ we have 
\begin{equation}\label{eqlow1}
c_{n}(id_{\mu}^*: (s^{t, \Omega}_{p_1,p_1} f)_{\mu} \to (s^{0,\Omega}_{p_2,2}f)_{\mu})\leq c_n(id^*) \, .
\end{equation}
\end{lemma}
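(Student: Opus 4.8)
The plan is to realize $id_\mu^*$ as a composition of $id^*$ with suitable canonical norm-one embedding and projection operators, so that property (c) of $s$-numbers (the ideal property) forces the inequality. Concretely, let $P_\mu$ denote the coordinate projection that restricts a sequence $\lambda=\{\lambda_{\bar\nu,\bar m}\}$ to those indices with $|\bar\nu|_1=\mu$, and let $J_\mu$ denote the canonical embedding that takes a finitely supported block $\{\lambda_{\bar\nu,\bar m}:|\bar\nu|_1=\mu\}$ and views it as an element of the full sequence space by extending with zeros. The key observation is the factorization
\[
id_\mu^*: (s^{t,\Omega}_{p_1,p_1}f)_\mu \xrightarrow{\ J_\mu\ } s^{t,\Omega}_{p_1,p_1}f \xrightarrow{\ id^*\ } s^{0,\Omega}_{p_2,2}f \xrightarrow{\ P_\mu\ } (s^{0,\Omega}_{p_2,2}f)_\mu \, ,
\]
and this composition is indeed $id_\mu^*$ because $id^*$ acts diagonally (coordinatewise as the identity) so it commutes with the block structure.

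The main technical point I would verify is that $\|J_\mu\|\le 1$ and $\|P_\mu\|\le 1$ with respect to the relevant quasi-norms. For the target side this is immediate: comparing the definition of $\|\cdot|(s^{0,\Omega}_{p_2,2}f)_\mu\|$ with $\|\cdot|s^{0,\Omega}_{p_2,2}f\|$, restricting the inner sum over $\bar\nu$ to the level set $|\bar\nu|_1=\mu$ only decreases the integrand pointwise, hence $\|P_\mu\lambda|(s^{0,\Omega}_{p_2,2}f)_\mu\|\le\|\lambda|s^{0,\Omega}_{p_2,2}f\|$. For the source side $J_\mu$ is even isometric, since for a sequence supported on a single level $\mu$ the full-space norm and the block norm are literally the same expression. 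With both factors of norm at most one, property (c) of $s$-numbers gives
\[
c_n(id_\mu^*) = c_n(P_\mu \circ id^* \circ J_\mu) \le \|P_\mu\|\cdot c_n(id^*)\cdot\|J_\mu\| \le c_n(id^*) \, ,
\]
which is exactly \eqref{eqlow1}.

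I do not expect a genuine obstacle here; the one place to be slightly careful is checking that $id^*$ really does map $s^{t,\Omega}_{p_1,p_1}f$ boundedly into $s^{0,\Omega}_{p_2,2}f$ in the first place (so that the middle arrow makes sense as a bounded operator to which property (c) applies), but this is part of the standing setup — it is the operator whose Gelfand numbers are being studied via Lemma \ref{weyl}, hence bounded for $t>(\frac1{p_1}-\frac1{p_2})_+$. One could also phrase the argument without naming $J_\mu$ and $P_\mu$ explicitly, simply noting that $id_\mu^*$ is obtained from $id^*$ by pre- and post-composing with metric injections/surjections determined by the block decomposition; the content is the same. This lemma then feeds into the lower bound for $c_n(id^*)$ by choosing, for each $n$, an optimal level $\mu$ and invoking Lemma \ref{ba1}(ii) together with the known lower bounds for $c_n(id^{D_\mu}_{p_1,p_2})$ from Lemma \ref{gel}.
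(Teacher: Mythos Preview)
Your proposal is correct and is precisely the standard factorization argument the paper has in mind: the paper does not spell out a proof but refers to \cite[Lemma~6.10]{KiSi}, where the identical decomposition $id_\mu^* = P_\mu \circ id^* \circ J_\mu$ with $\|J_\mu\|\le 1$, $\|P_\mu\|\le 1$ is used together with the ideal property (c) of $s$-numbers. One cosmetic remark: since $1<p_1,p_2<\infty$ here, the spaces are Banach, so you can drop the word ``quasi-norms''.
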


Lemma \ref{low} was proved for Weyl numbers in \cite[Lemma 6.10]{KiSi}. However we can follow the proof there and obtain the similar result for Gelfand numbers. Concerning the estimate from above the main idea is using the decomposition method, see \cite{Vy-06} and also \cite{KiSi}. We define the operators
$$ id_{\mu}  :\ s_{p_1,p_1}^{t,\Omega}f\to s_{p_2,2}^{0,\Omega}f\, , $$
where 
$$
(id_{\mu} \lambda)_{\bar{\nu},\bar{m}} := \begin{cases}
\lambda_{\bar{\nu},\bar{m}}&\text{if }\ |\bar{\nu}|_1=\mu,\\
0&\text{otherwise}.
\end{cases}
$$
We split $id^* :\ s_{p_1,p_1}^{t,\Omega}f\to s_{p_2,2}^{0,\Omega}f$
 into a sum of identities between building blocks
\beqq
id^* =   \sum_{\mu=0}^{J}\, id_{\mu}  + \sum_{\mu=J+1}^{L}\, id_{\mu}  + \sum_{\mu=L+1}^{\infty}\, id_{\mu} ,
\eeqq
where $ J$ and $L$ are at our disposal. These numbers $J$ and $L$ will be chosen in dependence on the parameters. The additivity and the monotonicity of the Gelfand numbers yield
\be\label{ws-16}
c_n(id^*)\leq \sum_{\mu=0}^{J}c_{n_{\mu}}(id_{\mu} )+\sum_{\mu=J+1}^{L}c_{n_{\mu}}(id_{\mu} )+\sum_{\mu=L+1}^{\infty} \|id_{\mu} \|  , 
\ee
where $n-1 = \sum_{\mu=0}^{L}(n_{\mu}-1)$. We observe that for $n\in \N$ and $\mu\in \N_0$ we have
\be\label{equ}
c_n\big( id_{\mu} :\ s_{p_1,p_1}^{t,\Omega}f\to s_{p_2,2}^{0,\Omega}f\big) = c_n\big(id_{\mu}^* :\ (s_{p_1,p_1}^{t,\Omega}f)_{\mu}\to (s_{p_2,2}^{0,\Omega}f)_{\mu}\big),
\ee
in particular, $\|id_{\mu}\| =\| id_{\mu}^*\|$. We have
$$ \|id_{\mu}^* \|\lesssim 2^{-\mu\big(t-(\frac{1}{p_1}-\frac{1}{p_2})_+\big)}\, \mu^{(d-1)(\frac{1}{2}-\frac{1}{p_1})_+}, $$
see Lemma \ref{ba2}, which results in the estimate
\be\label{ws-17}
\sum_{\mu=L+1}^{\infty} \|id_{\mu}^*\| \lesssim 2^{-L\big(t-(\frac{1}{p_1}-\frac{1}{p_2})_+\big) }L^{(d-1) (\frac{1}{2}-\frac{1}{p_1})_+} \, .
\ee
Now we choose $
n_{\mu} := D_{\mu}+1,\ \mu=0,1,....,J \, .
$
Then we get
\be\label{ws-23}
\sum_{\mu=0}^{J}n_{\mu} \, \asymp\, \sum_{\mu=0}^{J}\mu^{(d-1)}2^{\mu} \, \asymp\, J^{d-1}2^J
\ee
and $c_{n_{\mu}}(id_{\mu}^*)=0 $, see property (d) of $s$-numbers, 
which implies 
\be\label{ws-19}
\sum_{\mu=0}^{J}c_{n_{\mu}}(id_{\mu}^*)=0 \, .
\ee
Summarizing \eqref{ws-16}-\eqref{ws-17} and \eqref{ws-19} we have found
\begin{equation}\label{sum1}
c_n(id^*)\lesssim \sum_{\mu=J+1}^{L} \, c_{n_{\mu}}(id_{\mu}^*)+ 2^{-L\big(t-(\frac{1}{p_1}-\frac{1}{p_2})_+\big)}L^{(d-1) (\frac{1}{2}-\frac{1}{p_1})_+}\, .
\end{equation}
Now we turn to the problem to reduce the estimates for  Gelfand numbers $c_{n_{\mu}}(id_{\mu}^*)$ to estimates for 
$c_{n}(id_{p_1,p_2}^m)$. The following results were proved for Weyl numbers in \cite[Propositions 6.7, 6.8, Lemma 6.9]{KiSi}, but they are also true for Gelfand numbers.

\begin{lemma}\label{wichtig1}
Let $1<p_1< \infty$ and $t\in \mathbb{R}$. Then we have the following assertions.
\begin{enumerate}
\item If $1< p_2\leq 2$, then 
\begin{equation}\label{case1}
\mu^{(d-1)(-\frac{1}{p_2}+\frac{1}{2})}2^{\mu(-t+\frac{1}{p_1}-\frac{1}{p_2})} \, c_n(id_{p_1,p_2}^{D_{\mu}} )
\lesssim c_n(id_{\mu}^*) \lesssim 2^{\mu(-t+\frac{1}{p_1}-\frac{1}{p_2})}\, c_n(id_{p_1,p_2}^{D_\mu}).
\end{equation}
\item If $2\leq p_2<\infty$, then 
\begin{equation}\label{case2}
2^{\mu(-t+\frac{1}{p_1}-\frac{1}{p_2})}\, c_n(id_{p_1,p_2}^{D_{\mu}} )\lesssim c_n(id_{\mu}^*) .
\end{equation}
\end{enumerate}
\end{lemma}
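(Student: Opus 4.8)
The plan is to mirror the treatment of Weyl numbers in \cite{KiSi}, since every step there uses only the abstract $s$-number axioms (a)--(e) — above all the ideal property (c) — together with an explicit description of the building blocks; nothing is special to Weyl numbers, so it transfers verbatim to Gelfand numbers. Thus the real work is to (i) identify the source space $(s^{t,\Omega}_{p_1,p_1}f)_\mu$ and (ii) compare the target space $(s^{0,\Omega}_{p_2,2}f)_\mu$ with a suitably scaled copy of $\ell_{p_2}^{D_\mu}$.

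For (i) I would simply invoke Lemma \ref{ba1}(ii): $(s^{t,\Omega}_{p_1,p_1}f)_\mu = 2^{\mu(t-1/p_1)}\ell_{p_1}^{D_\mu}$ isometrically, so that $id_\mu^*$ is, up to the scalar $2^{-\mu(t-1/p_1)}$, a map out of $\ell_{p_1}^{D_\mu}$. For (ii) the relevant structural facts, all coming from Lemma \ref{ba1}(i) and the construction of the $\chi_{\bar\nu,\bar m}$, are: there are $\asymp\mu^{d-1}$ directions $\bar\nu$ with $|\bar\nu|_1=\mu$; for each fixed such $\bar\nu$ the sets $\supp\chi_{\bar\nu,\bar m}$, $\bar m\in A^\Omega_{\bar\nu}$, are essentially pairwise disjoint, of volume $\asymp 2^{-\mu}$, and cover the bulk of $\Omega$; hence a generic $x$ lies in $\asymp\mu^{d-1}$ of the supports, exactly one per direction.

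From this I would derive, for $1<p_2\le 2$, the two-sided estimate
\[
\mu^{(d-1)(\frac12-\frac1{p_2})}\,2^{-\mu/p_2}\,\|\lambda\,|\ell_{p_2}^{D_\mu}\| \;\lesssim\; \|\lambda\,|(s^{0,\Omega}_{p_2,2}f)_\mu\| \;\lesssim\; 2^{-\mu/p_2}\,\|\lambda\,|\ell_{p_2}^{D_\mu}\|,
\]
the upper bound coming from the subadditivity of $s\mapsto s^{p_2/2}$ (valid since $p_2/2\le1$) applied to $\sum|\lambda_{\bar\nu,\bar m}|^2\chi_{\bar\nu,\bar m}$ in $L_{p_2/2}$, and the lower bound from the pointwise inequality $\|v\|_{\ell_2}\ge(\#\,\supp v)^{\frac12-\frac1{p_2}}\|v\|_{\ell_{p_2}}$ applied to the at-most-$\asymp\mu^{d-1}$ nonzero fiber $(\lambda_{\bar\nu,\bar m(\bar\nu,x)})_{\bar\nu}$, followed by integration over $\Omega$. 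For $2\le p_2<\infty$ the reverse pointwise inequality $\|v\|_{\ell_2}\ge\|v\|_{\ell_{p_2}}$ yields only the lower bound $\|\lambda\,|(s^{0,\Omega}_{p_2,2}f)_\mu\|\gtrsim 2^{-\mu/p_2}\|\lambda\,|\ell_{p_2}^{D_\mu}\|$ with no logarithmic loss, which is exactly what part (ii) asks for. Feeding these norm comparisons into property (c) — concretely, $c_n(\mathrm{id}\colon X\to Y_1)\ge c_n(\mathrm{id}\colon X\to Y_2)$ whenever $\|\cdot\|_{Y_1}\ge\|\cdot\|_{Y_2}$, and likewise for the upper bound — and merging the scalars $2^{-\mu(t-1/p_1)}$ and $2^{-\mu/p_2}$ into $2^{\mu(-t+1/p_1-1/p_2)}$, gives \eqref{case1} and \eqref{case2}.

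The one genuinely delicate point is the bookkeeping of the logarithmic ($\mu$-)powers: one has to be sure that the number of directions enters with the correct sign $\frac12-\frac1{p_2}$, that for $1<p_2\le 2$ only the lower estimate on $\|\cdot\,|(s^{0,\Omega}_{p_2,2}f)_\mu\|$ loses a $\mu$-power, and that for $2\le p_2<\infty$ the clean one-sided bound is all one needs — the matching upper bound, which would genuinely fail here, is supplied elsewhere in the paper by complementary arguments. Everything else, namely the disjointness and volume of the supports for a fixed direction and the transfer through property (c), is routine and identical to the Weyl-number computation in \cite{KiSi}.
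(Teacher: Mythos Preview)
Your proposal is correct and follows exactly the approach the paper indicates: the paper gives no independent proof but simply cites \cite[Propositions 6.7, 6.8, Lemma 6.9]{KiSi} and notes that the Weyl-number argument there, relying only on the ideal property (c) and the identification of the building blocks via Lemma \ref{ba1}, transfers verbatim to Gelfand numbers. Your reconstruction of that argument---identifying the source as $2^{\mu(t-1/p_1)}\ell_{p_1}^{D_\mu}$, establishing the two-sided (resp.\ one-sided) comparison of $(s^{0,\Omega}_{p_2,2}f)_\mu$ with $2^{-\mu/p_2}\ell_{p_2}^{D_\mu}$ via the pointwise $\ell_2$--$\ell_{p_2}$ inequalities on the $\asymp\mu^{d-1}$-supported fibers, and then applying property (c) in both directions---is precisely what that reference does.
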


\begin{lemma}
Let $1<p_1,p_2<\infty$ and $0<\varepsilon<p_2$. Then
\begin{equation}\label{case5}
c_n(id_{\mu}^*)\leq 2^{\mu(-t+\frac{1}{p_1}-\frac{1}{p_2})} \, c_n(id_{p_1,p_2-\varepsilon}^{D_{\mu}})\, .
\end{equation}
\end{lemma}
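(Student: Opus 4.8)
The plan is to reduce the claimed bound $c_n(id_\mu^*) \le 2^{\mu(-t+\frac{1}{p_1}-\frac{1}{p_2})}\,c_n(id^{D_\mu}_{p_1,p_2-\varepsilon})$ to a factorization of $id^*_\mu:(s^{t,\Omega}_{p_1,p_1}f)_\mu \to (s^{0,\Omega}_{p_2,2}f)_\mu$ through the finite-dimensional identity $id^{D_\mu}_{p_1,p_2-\varepsilon}:\ell^{D_\mu}_{p_1}\to\ell^{D_\mu}_{p_2-\varepsilon}$, and then to exploit that on a fixed finite-dimensional space the $\ell_{p_2-\varepsilon}$-norm dominates (up to a constant depending on $\varepsilon$ and $p_2$, but crucially independent of the dimension in the right direction after rescaling by the smoothness factor) the target norm. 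Concretely, I would first use Lemma~\ref{ba1}(ii), which identifies $(s^{t,\Omega}_{p_1,p_1}f)_\mu$ isometrically with $2^{\mu(t-\frac{1}{p_1})}\ell^{D_\mu}_{p_1}$, to strip off the source smoothness weight. For the target, I would compare $(s^{0,\Omega}_{p_2,2}f)_\mu$ with the sequence space $(s^{0,\Omega}_{p_2-\varepsilon,p_2-\varepsilon}f)_\mu = 2^{-\frac{\mu}{p_2-\varepsilon}}\ell^{D_\mu}_{p_2-\varepsilon}$ (again via Lemma~\ref{ba1}(ii)); the point of lowering the integrability index from $p_2$ to $p_2-\varepsilon$ is that $(s^{0,\Omega}_{p_2-\varepsilon,p_2-\varepsilon}f)_\mu \hookrightarrow (s^{0,\Omega}_{p_2,2}f)_\mu$ holds with norm bounded by a constant depending only on $p_2,\varepsilon,d$ (since we go down in the first index and the second index $p_2-\varepsilon$ versus $2$ contributes at most a $\mu$-power which is absorbed — or, more cleanly, one chooses $\varepsilon$ small enough that $p_2-\varepsilon \le 2$ handled separately, but the uniform embedding on each fixed block is what matters).

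The main computation is then a short chain of $s$-number axioms. Writing $id^*_\mu = B \circ id^{D_\mu}_{p_1,p_2-\varepsilon} \circ A$, where $A:(s^{t,\Omega}_{p_1,p_1}f)_\mu \to \ell^{D_\mu}_{p_1}$ is the isomorphism carrying the normalization $2^{\mu(t-\frac{1}{p_1})}$ (so $\|A\| \asymp 2^{-\mu(t-\frac{1}{p_1})}$) and $B:\ell^{D_\mu}_{p_2-\varepsilon}\to (s^{0,\Omega}_{p_2,2}f)_\mu$ absorbs the weight $2^{-\frac{\mu}{p_2-\varepsilon}}$ together with the uniformly bounded block embedding (so $\|B\| \lesssim 2^{-\frac{\mu}{p_2-\varepsilon}} \cdot 2^{\frac{\mu}{p_2-\varepsilon}} \cdot (\text{const}) \asymp 1$ — here the $2^{\mu/(p_2-\varepsilon)}$ comes from expressing the $\ell_{p_2-\varepsilon}$ ball inside the weighted sequence space, and it exactly cancels), property~(c) of $s$-numbers gives
\[
c_n(id^*_\mu) \le \|B\|\cdot c_n(id^{D_\mu}_{p_1,p_2-\varepsilon})\cdot\|A\| \lesssim 2^{-\mu(t-\frac{1}{p_1})}\,c_n(id^{D_\mu}_{p_1,p_2-\varepsilon}).
\]
Keeping careful track of the powers of $2^\mu$, the source weight contributes $2^{-\mu(t-1/p_1)}$ and I must check the target contributes a net $2^{-\mu/p_2}$ (not $2^{-\mu/(p_2-\varepsilon)}$): this is because the $\ell_{p_2}$-to-$\ell_{p_2-\varepsilon}$ comparison on $D_\mu$ coordinates, when combined with the differing normalization constants $2^{-\mu/p_2}$ versus $2^{-\mu/(p_2-\varepsilon)}$ of the two sequence spaces, yields exactly the factor $2^{-\mu/p_2}$ after the $D_\mu^{1/(p_2-\varepsilon)-1/p_2}\asymp 2^{\mu(1/(p_2-\varepsilon)-1/p_2)}$ volume factor is taken into account. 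Assembling these exponents gives $2^{\mu(-t+1/p_1-1/p_2)}$, which is the claimed prefactor.

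The step I expect to be the main obstacle — though it is more bookkeeping than genuine difficulty — is making the comparison $(s^{0,\Omega}_{p_2-\varepsilon,p_2-\varepsilon}f)_\mu \hookrightarrow (s^{0,\Omega}_{p_2,2}f)_\mu$ with a constant that is uniform in $\mu$, since a naive estimate produces a spurious $\mu^{(d-1)(\cdot)}$ factor or an extra $2^{\mu(\cdot)}$ that would ruin the exponent. The cleanest route is to invoke Lemma~\ref{ba1}(ii) to write both blocks as explicitly weighted $\ell_r$-spaces over the same index set of cardinality $\asymp D_\mu$, reducing the embedding to the trivial $\ell^{D_\mu}_{p_2-\varepsilon}\hookrightarrow\ell^{D_\mu}_{p_2}$ (norm $\le 1$ when $p_2-\varepsilon \le p_2$, which always holds) after matching normalizations — and then the only surviving $\mu$-dependence is the intended $2^{\mu(1/(p_2-\varepsilon)-1/p_2)}$ which merges with $c_n(id^{D_\mu}_{p_1,p_2-\varepsilon})$'s target normalization. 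One should also note the hypothesis $0<\varepsilon<p_2$ merely guarantees $p_2-\varepsilon>0$ so the target index is admissible; no lower bound on $\varepsilon$ is needed because the estimate is an upper bound and the constants are allowed to blow up as $\varepsilon\to 0^+$. Finally, formula~\eqref{equ} and the identification $c_n(id_\mu)=c_n(id^*_\mu)$ let one phrase everything either at the level of the block operator $id_\mu$ or of $id^*_\mu$ interchangeably, so the statement as written follows directly.
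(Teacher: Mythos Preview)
Your overall strategy --- factor $id^*_\mu$ as $B\circ id^{D_\mu}_{p_1,p_2-\varepsilon}\circ A$ and apply property~(c) of $s$-numbers --- is correct and is the standard argument the paper has in mind (it defers the proof to \cite[Lemma~6.9]{KiSi}). However, two points in your execution are wrong and should be cleaned up.

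First, the claim that $(s^{0,\Omega}_{p_2-\varepsilon,p_2-\varepsilon}f)_\mu \hookrightarrow (s^{0,\Omega}_{p_2,2}f)_\mu$ has norm bounded by a constant independent of $\mu$ is \emph{false}, and so is the computation ``$\|B\|\asymp 1$''. By Lemma~\ref{ba2}(ii) (applied with $p_2-\varepsilon<p_2$) this embedding has norm $\lesssim 2^{\mu(1/(p_2-\varepsilon)-1/p_2)}$, and that exponential growth is genuine. The correct bookkeeping is: the identification $\ell^{D_\mu}_{p_2-\varepsilon}\to (s^{0,\Omega}_{p_2-\varepsilon,p_2-\varepsilon}f)_\mu$ from Lemma~\ref{ba1}(ii) contributes $2^{-\mu/(p_2-\varepsilon)}$, and then Lemma~\ref{ba2}(ii) contributes $2^{\mu(1/(p_2-\varepsilon)-1/p_2)}$; together $\|B\|\lesssim 2^{-\mu/p_2}$. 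Combined with $\|A\|\asymp 2^{-\mu(t-1/p_1)}$ this gives the stated prefactor $2^{\mu(-t+1/p_1-1/p_2)}$. You eventually reach this exponent, but through a muddled argument.

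Second, your ``cleanest route'' of writing $(s^{0,\Omega}_{p_2,2}f)_\mu$ as a weighted $\ell_r$-space via Lemma~\ref{ba1}(ii) does not work: that lemma requires the inner and outer indices to coincide, whereas here $q=2\neq p_2$ in general. The entire point of the $\varepsilon$-trick is to land first in the diagonal space $(s^{0,\Omega}_{p_2-\varepsilon,p_2-\varepsilon}f)_\mu$, where Lemma~\ref{ba1}(ii) \emph{does} apply, and then invoke Lemma~\ref{ba2}(ii) --- which, because the first index strictly increases, carries no polynomial $\mu$-factor --- to reach the actual target. That is the clean route; once you use the correct lemma the ``spurious $\mu^{(d-1)(\cdot)}$'' worry you flagged disappears automatically.
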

In addition, we need the following lemma.
\begin{lemma} Let $1<p_1,p_2<\infty$ and $0<\varepsilon$. Then
\be \label{case6}
2^{\mu( -t+\frac{1}{p_1} -\frac{1}{p_2})}\, c_n(id_{p_1,p_2+\varepsilon}^{D_\mu}) \lesssim
c_n(id^*_{\mu}).
\ee 
\end{lemma}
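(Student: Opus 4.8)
The plan is to derive \eqref{case6} from a factorisation of the finite-dimensional map $id_{p_1,p_2+\varepsilon}^{D_{\mu}}$ through $id^*_{\mu}$, combined with the multiplicativity axiom (c) of $s$-numbers. Throughout I identify the index set $\{(\bar{\nu},\bar{m}):|\bar{\nu}|_1=\mu,\ \bar{m}\in A_{\bar{\nu}}^{\Omega}\}$ with $\{1,\ldots,D_{\mu}\}$, so that the phrase ``coordinate identity'' makes literal sense between any two of the sequence spaces living over this common index set. All constants below depend only on $p_1,p_2,\varepsilon,d$, never on $\mu$ or $n$.

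First I would record, from Lemma \ref{ba1}(ii), the two norm identities
\[
(s_{p_1,p_1}^{t,\Omega}f)_{\mu}=2^{\mu(t-1/p_1)}\,\ell_{p_1}^{D_{\mu}},\qquad
(s_{p_2+\varepsilon,p_2+\varepsilon}^{0,\Omega}f)_{\mu}=2^{-\mu/(p_2+\varepsilon)}\,\ell_{p_2+\varepsilon}^{D_{\mu}}.
\]
Hence the coordinate identity $R:\ell_{p_1}^{D_{\mu}}\to(s_{p_1,p_1}^{t,\Omega}f)_{\mu}$ has $\|R\|=2^{\mu(t-1/p_1)}$ and the coordinate identity $Q:(s_{p_2+\varepsilon,p_2+\varepsilon}^{0,\Omega}f)_{\mu}\to\ell_{p_2+\varepsilon}^{D_{\mu}}$ has $\|Q\|=2^{\mu/(p_2+\varepsilon)}$.

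Next --- and this is the only place where the hypothesis $\varepsilon>0$ enters --- I would apply Lemma \ref{ba2}(ii) with integrability exponents $p_2<p_2+\varepsilon$, source fine index $2$, target fine index $p_2+\varepsilon$ and smoothness parameter $0$. Since the integrability exponent increases \emph{strictly}, this gives
\[
V:=id^*_{\mu}:(s_{p_2,2}^{0,\Omega}f)_{\mu}\to(s_{p_2+\varepsilon,p_2+\varepsilon}^{0,\Omega}f)_{\mu},\qquad \|V\|\lesssim 2^{\mu(1/p_2-1/(p_2+\varepsilon))},
\]
with no logarithmic factor in $\mu$. Then $S:=Q\circ V:(s_{p_2,2}^{0,\Omega}f)_{\mu}\to\ell_{p_2+\varepsilon}^{D_{\mu}}$ is again the coordinate identity and $\|S\|\le\|Q\|\,\|V\|\lesssim 2^{\mu/(p_2+\varepsilon)}\cdot 2^{\mu(1/p_2-1/(p_2+\varepsilon))}=2^{\mu/p_2}$. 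Because $R$, $id^*_{\mu}$ (this time the operator $(s_{p_1,p_1}^{t,\Omega}f)_{\mu}\to(s_{p_2,2}^{0,\Omega}f)_{\mu}$ whose Gelfand numbers we are estimating) and $S$ are all coordinate identities, their composition equals $id_{p_1,p_2+\varepsilon}^{D_{\mu}}$, so axiom (c) yields
\[
c_n\big(id_{p_1,p_2+\varepsilon}^{D_{\mu}}\big)\le\|S\|\,c_n(id^*_{\mu})\,\|R\|\lesssim 2^{\mu/p_2}\cdot 2^{\mu(t-1/p_1)}\,c_n(id^*_{\mu})=2^{\mu(t-1/p_1+1/p_2)}\,c_n(id^*_{\mu}),
\]
and multiplying through by $2^{\mu(-t+1/p_1-1/p_2)}$ gives exactly \eqref{case6}.

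The only step that is not pure bookkeeping with the $s$-number axioms is the bound on $\|V\|$; the essential point is that one must route the target through $(s_{p_2+\varepsilon,p_2+\varepsilon}^{0,\Omega}f)_{\mu}$, i.e.\ through a strictly larger integrability exponent, so that the logarithm-free Lemma \ref{ba2}(ii) applies rather than the non-strict Lemma \ref{ba2}(i), which would cost a spurious factor $\mu^{(d-1)(1/(p_2+\varepsilon)-1/2)_+}$. This is precisely what the auxiliary parameter $\varepsilon$ is introduced for. I would expect the one thing to be careful about to be the consistent identification of the three sequence spaces over the common index set, so that every arrow in the factorisation is genuinely the identity on coordinates and the three norm estimates combine cleanly.
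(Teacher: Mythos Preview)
Your argument is correct and coincides with the paper's proof. The paper presents the same factorisation more compactly via the triangle $id^2=id^1\circ id^*_{\mu}$ with $id^1:(s^{0,\Omega}_{p_2,2}f)_\mu\to(s^{0,\Omega}_{p_2+\varepsilon,p_2+\varepsilon}f)_\mu$ (your $V$) and $id^2:(s^{t,\Omega}_{p_1,p_1}f)_\mu\to(s^{0,\Omega}_{p_2+\varepsilon,p_2+\varepsilon}f)_\mu$, then invokes Lemma~\ref{ba1}(ii) to identify $c_n(id^2)$ with $2^{\mu(-t+1/p_1-1/(p_2+\varepsilon))}c_n(id_{p_1,p_2+\varepsilon}^{D_\mu})$; your maps $R$ and $Q$ simply make this last identification explicit as two further coordinate identities, and the key use of Lemma~\ref{ba2}(ii) for $\|V\|=\|id^1\|$ is identical.
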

\begin{proof}
 We consider the following diagram

\tikzset{node distance=4cm, auto}

\begin{center}
\begin{tikzpicture}
 \node (H) {$(s^{t,\Omega}_{p_1,p_1}f)_\mu $};
 \node (L) [right of =H] {$(s^{0,\Omega}_{p_2+\varepsilon,p_2+\varepsilon}f)_\mu $};
 \node (L2) [right of =H, below of =H, node distance = 2cm ] {$(s^{0,\Omega}_{p_2,2}f)_\mu$};
 \draw[->] (H) to node {$id^2$} (L);
 \draw[->] (H) to node [swap] {$id^*_\mu$} (L2);
 \draw[->] (L2) to node [swap]{$id^1$} (L);
 \end{tikzpicture}
\end{center}
%$$
%\begin{tikzcd}[column sep=small]
%(s^{t,\Omega}_{p_1,p_1}f)_\mu \arrow[dr, "id_\mu"] \arrow[rr,"id_{2}"]& 
%&(s^{0,\Omega}_{p_2,p_2}f)_\mu \\
%& (s^{0,\Omega}_{p_2,2}f)_\mu \arrow[ur,"id_1"]
%\end{tikzcd}
%$$
\noindent
and obtain 
\be\label{1}
 c_n(id^2)\leq \| \, id^1\, \| \, c_n(id_{\mu}^*) ,
 \ee
see property (c) of $s$-numbers. By Lemma \ref{ba2} we have 
\be\label{2}
 \| \, id^1\, \| \lesssim 2^{\mu(\frac{1}{p_2}-\frac{1}{p_2+\varepsilon})}.
 \ee 
From Lemma \ref{ba1} we derive
$$ 2^{\mu( -t+\frac{1}{p_1} -\frac{1}{p_2+\varepsilon})} \, c_n(id_{p_1,p_2+\varepsilon}^{D_\mu}) \lesssim c_n(id^2) \, . $$
Inserting this and \eqref{2} into \eqref{1} we obtain the claimed estimate.
\end{proof}
\begin{proposition}\label{mot} Let $1<p_2<\infty $, $\max(2,p_2)\leq  p_1<\infty$ and $t>0$. Then
\[ 
c_n(id^* )\asymp n^{-t}(\log n)^{(d-1)(t -\frac{1}{p_1}+\frac{1}{2})} \, , \qquad n\geq 2.
\]
\end{proposition}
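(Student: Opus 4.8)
The plan is to establish the matching upper and lower bounds for $c_n(id^*)$ in the case $\max(2,p_2)\le p_1$, $t>0$, using the decomposition machinery already set up in this section together with Lemma~\ref{gel}. Note first that in this parameter range $p_2\le p_1$, so the relevant estimate from Lemma~\ref{gel} is \eqref{wth1}: $c_n(id_{p_1,p_2}^{m})\asymp (m-n+1)^{1/p_2-1/p_1}$, and since $\frac{1}{p_2}-\frac{1}{p_1}\le 0$ this is an increasing-in-$n$ quantity bounded by $1$. Also $(\frac{1}{p_1}-\frac{1}{p_2})_+=0$ and $(\frac{1}{2}-\frac{1}{p_1})_+=\frac{1}{2}-\frac{1}{p_1}$ since $p_1\ge 2$, so the norm bound \eqref{ws-17} reads $\sum_{\mu>L}\|id_\mu^*\|\lesssim 2^{-Lt}L^{(d-1)(1/2-1/p_1)}$, and \eqref{sum1} becomes
\[
c_n(id^*)\lesssim \sum_{\mu=J+1}^{L} c_{n_\mu}(id_\mu^*) + 2^{-Lt}L^{(d-1)(1/2-1/p_1)}.
\]

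For the \textbf{upper bound}, first I would use Lemma~\ref{wichtig1}(i) (which applies since $p_2\le 2$) to get $c_{n_\mu}(id_\mu^*)\lesssim 2^{\mu(-t+1/p_1-1/p_2)}c_{n_\mu}(id_{p_1,p_2}^{D_\mu})$, and then plug in \eqref{wth1} with $m=D_\mu\asymp \mu^{d-1}2^\mu$. Given a target $n$, the standard choice is to take $J\asymp \log n$ with $J^{d-1}2^J\asymp n$ (so that, by \eqref{ws-23}, $\sum_{\mu\le J}n_\mu\asymp n$ and these blocks are killed for free by \eqref{ws-19}), then distribute the remaining budget among $\mu=J+1,\dots,L$ — for instance with $n_\mu-1\asymp D_\mu 2^{-(\mu-J)\delta}$ for a suitable small $\delta>0$, or simply $n_\mu\asymp D_\mu/( \mu-J)^2$, so that $\sum_{\mu>J}(n_\mu-1)$ stays $\asymp n$ and $D_\mu-n_\mu+1\asymp D_\mu 2^{-(\mu-J)\delta}$. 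Then $c_{n_\mu}(id_\mu^*)\lesssim 2^{\mu(-t+1/p_1-1/p_2)}(D_\mu 2^{-(\mu-J)\delta})^{1/p_2-1/p_1}$; since $\frac1{p_2}-\frac1{p_1}\le 0$ the factor $2^{-(\mu-J)\delta(1/p_2-1/p_1)}\ge 1$ grows, but combined with $D_\mu^{1/p_2-1/p_1}\asymp \mu^{(d-1)(1/p_2-1/p_1)}2^{\mu(1/p_2-1/p_1)}$ the $2^\mu$-powers cancel and one is left with a geometrically decaying sum in $\mu$ dominated by the $\mu=J+1$ term, namely $\lesssim 2^{-Jt}J^{(d-1)(1/p_2-1/p_1)+\cdots}$; one checks the exponent of $J$ works out, and finally $L$ is chosen so that $2^{-Lt}L^{(d-1)(1/2-1/p_1)}$ is of the same order or smaller, i.e. $L\asymp$ a fixed multiple of $J$. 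Converting $J\asymp\log n$ gives $c_n(id^*)\lesssim n^{-t}(\log n)^{(d-1)(t-1/p_1+1/2)}$ after collecting the logarithmic powers (the extra $(\log n)^{(d-1)t}$ coming from $n\asymp J^{d-1}2^J$, i.e. $2^{-Jt}\asymp n^{-t}J^{(d-1)t}$).

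For the \textbf{lower bound}, I would use Lemma~\ref{low} together with the single-block estimate Lemma~\ref{wichtig1}(i) (left inequality), which gives $c_n(id^*)\ge c_n(id_\mu^*)\gtrsim \mu^{(d-1)(-1/p_2+1/2)}2^{\mu(-t+1/p_1-1/p_2)}c_n(id_{p_1,p_2}^{D_\mu})$ for every $\mu$. Choosing $\mu\asymp\log n$ with $D_\mu\asymp 2n$ (so $D_\mu\asymp\mu^{d-1}2^\mu\asymp n$, hence $2^{-\mu t}\asymp n^{-t}\mu^{(d-1)t}$) makes $c_n(id_{p_1,p_2}^{D_\mu})\asymp (D_\mu-n+1)^{1/p_2-1/p_1}\asymp n^{1/p_2-1/p_1}$; substituting and cancelling the $2^{\mu(1/p_1-1/p_2)}$ against $n^{1/p_2-1/p_1}\asymp D_\mu^{1/p_2-1/p_1}$ (up to $\mu^{(d-1)(1/p_2-1/p_1)}$) yields $c_n(id^*)\gtrsim n^{-t}(\log n)^{(d-1)\gamma}$ with $\gamma=t+(-\frac1{p_2}+\frac12)+(\frac1{p_2}-\frac1{p_1})=t-\frac1{p_1}+\frac12$, matching the upper bound.

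\textbf{Main obstacle.} The routine but delicate part is the bookkeeping of the logarithmic powers: the $\mu^{d-1}$ factors hidden in $D_\mu$, the $(\log n)^{(d-1)t}$ emerging from inverting $n\asymp J^{d-1}2^J$, and the $q$-related powers $\mu^{(d-1)(1/2-1/p_2)}$ from Lemma~\ref{wichtig1} all have to combine to exactly $(d-1)(t-1/p_1+1/2)$. The other point requiring care is verifying that the free parameters $J,L$ (and the rate at which $n_\mu$ decays for $J<\mu\le L$) can be chosen simultaneously so that $\sum n_\mu\asymp n$, the tail term $2^{-Lt}L^{(d-1)(1/2-1/p_1)}$ does not dominate, and the sum $\sum_{\mu=J+1}^L c_{n_\mu}(id_\mu^*)$ is geometrically summable — this is where having $p_2\le p_1$ (so the exponent $1/p_2-1/p_1\le 0$) is used, ensuring the per-block bound behaves well. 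I would present these choices explicitly and relegate the exponent arithmetic to a short displayed computation.
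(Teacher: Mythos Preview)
Your lower bound for the sub-case $p_2\le 2$ is essentially the paper's argument and the exponent bookkeeping you sketch there is correct. However, there are two genuine gaps.

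\medskip

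\textbf{First, the case $2<p_2\le p_1$ is not covered.} The hypothesis $\max(2,p_2)\le p_1$ only says $p_1\ge 2$ and $p_1\ge p_2$; it does \emph{not} force $p_2\le 2$. Your repeated appeal to Lemma~\ref{wichtig1}(i) (``which applies since $p_2\le 2$'') is therefore only valid in half of the parameter range. The paper handles the lower bound for $2\le p_2\le p_1$ by factoring through $s^{0,\Omega}_{2,2}f$, i.e.\ $c_n(id:s^{t,\Omega}_{p_1,p_1}f\to s^{0,\Omega}_{2,2}f)\lesssim c_n(id^*)\cdot\|id:s^{0,\Omega}_{p_2,2}f\to s^{0,\Omega}_{2,2}f\|$, and then uses the already-established $p_2\le 2$ case.

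\medskip

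\textbf{Second, your upper-bound route does not give the correct logarithmic exponent.} You have a sign error: since $p_2\le p_1$ we have $\tfrac{1}{p_2}-\tfrac{1}{p_1}\ge 0$, not $\le 0$. With the correct sign, $(D_\mu-n_\mu+1)^{1/p_2-1/p_1}$ is \emph{decreasing} in $n_\mu$ and bounded above by $D_\mu^{1/p_2-1/p_1}$, not by~$1$. Plugging the right inequality of \eqref{case1} together with \eqref{wth1} into the block sum yields at best
\[
c_{n_\mu}(id_\mu^*)\lesssim 2^{\mu(-t+1/p_1-1/p_2)}D_\mu^{1/p_2-1/p_1}\asymp 2^{-\mu t}\mu^{(d-1)(1/p_2-1/p_1)},
\]
and summing gives $n^{-t}(\log n)^{(d-1)(t+1/p_2-1/p_1)}$, which is strictly worse than the target $(\log n)^{(d-1)(t+1/2-1/p_1)}$ whenever $p_2<2$. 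The point is that routing through $\ell_{p_2}^{D_\mu}$ loses the gain coming from the inner index $q_2=2$ of the target space $s^{0,\Omega}_{p_2,2}f$. The paper sidesteps this entirely by invoking $c_n\le a_n$ together with the known approximation-number asymptotics (Theorem~\ref{known}(i)); equivalently, one may simply take $L=J$ in \eqref{sum1} and use only the norm estimate \eqref{ws-17}, which already carries the correct power $L^{(d-1)(1/2-1/p_1)}$.
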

\begin{proof}
The upper estimates is a direct consequence of the inequality $c_n\leq a_n$ and Theorem  \ref{known} (i), see also \cite{Ki}. Concerning the estimate from below we first consider the case $p_2\leq 2\leq p_1$. From \eqref{eqlow1} and \eqref{case1} we have
\be \label{p2<2<p1}
\mu^{(d-1)(-\frac{1}{p_2}+\frac{1}{2})}2^{\mu(-t+\frac{1}{p_1}-\frac{1}{p_2})} \, c_n(id_{p_1,p_2}^{D_{\mu}} )
\lesssim c_n(id^*).
\ee 
Now we choose $n=[D_{\mu}/2]$ ($[x]$ denotes the integer part of the real number $x$) then   
\beqq
c_n(id_{p_1,p_2}^{D_{\mu}}) \asymp D_{\mu}^{\frac{1}{p_2}-\frac{1}{p_1}}\asymp (2^{\mu}\mu^{d-1})^{\frac{1}{p_2}-\frac{1}{p_1}}
\eeqq 
see \eqref{wth1}. Putting this into \eqref{p2<2<p1} we arrive at
\beqq
c_n(id^*)\gtrsim 2^{-\mu t}\mu^{(d-1)(\frac{1}{2}-\frac{1}{p_1})}.
\eeqq
Since $2^{\mu}\asymp \frac{n}{(\log n)^{d-1}}$ we obtain 
\beqq
c_n(id^* )\gtrsim n^{-t}(\log n)^{(d-1)(t -\frac{1}{p_1}+\frac{1}{2})}
\eeqq
for $n\asymp \mu^{d-1}2^{\mu}$, $\mu\in \N_0$. By monotonicity of Gelfand numbers, we extend this result to all $n\geq 2$. We estimate the lower bound for the case $2\leq p_2 \leq p_1$ by considering the chain of embeddings
\[ 
\quad s_{p_1,p_1}^{t,\Omega}f\hookrightarrow s_{p_2,2}^{0,\Omega}f \hookrightarrow s_{2,2}^{0,\Omega}f \, .
\]
From property (c) of the $s$-numbers we obtain
\beqq
c_n(id: s_{p_1,p_1}^{t,\Omega}f \to s_{2,2}^{0,\Omega}f ) \leq c_n(id^* ) \cdot \|id: s_{p_2,2}^{0,\Omega}f \to s_{2,2}^{0,\Omega}f\| \lesssim c_n(id^* ).
\eeqq
This together with the above result implies the desired estimate. The proof is complete.
\end{proof}
\begin{proposition}\label{hai} Let $1<p_1<2\leq p_2$ and $t>1-\frac{1}{p_2}$. Then
\[ 
c_n(id^* )\asymp n^{-t+\frac{1}{2}-\frac{1}{p_2}}(\log n)^{(d-1)(t-\frac{1}{p_1}+\frac{1}{p_2})} \, , \qquad n\geq 2.
\]
\end{proposition}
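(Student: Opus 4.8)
The plan is to follow the template already established in Proposition \ref{mot}: reduce everything to the finite-dimensional Gelfand numbers $c_n(id_{p_1,p_2}^{D_\mu})$ collected in Lemma \ref{gel} and the decomposition \eqref{sum1}. The upper estimate is the easier half. Since $2\le p_2$ we are in the range where we want to use \eqref{case5}: pick a small $\varepsilon>0$ with $p_2-\varepsilon>2$ still (possible because $p_2\ge2$; if $p_2=2$ one argues slightly differently, using \eqref{case1} directly with $p_2$ itself, since then $1<p_1<2=p_2$), so that the relevant block estimate is governed by the $2\le p_1<p_2$ line of Lemma \ref{gel}, i.e. $c_n(id_{p_1,p_2-\varepsilon}^{D_\mu})\asymp\max\{D_\mu^{1/(p_2-\varepsilon)-1/p_1},(\sqrt{1-n/D_\mu})^{(1/p_1-1/(p_2-\varepsilon))/(1/2-1/(p_2-\varepsilon))}\}$ — wait, here $p_1<2$, so actually the governing line is \eqref{wth2} or \eqref{wth4}. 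I would use \eqref{case5} with $p_2-\varepsilon$ chosen so that $1<p_1\le 2<p_2-\varepsilon$, landing in \eqref{wth4}: $c_n(id_{p_1,p_2-\varepsilon}^{m})\asymp\max\{m^{1/(p_2-\varepsilon)-1/p_1},\min\{1,m^{1-1/p_1}n^{-1/2}\}\sqrt{1-n/m}\}$. Plugging $m=D_\mu\asymp\mu^{d-1}2^\mu$ into \eqref{sum1}, distributing the budget $n-1=\sum(n_\mu-1)$ across the levels $J<\mu\le L$ (a geometric-type allocation, e.g. $n_\mu\asymp 2^{-a(\mu-J)}\cdot 2^{\mu}\mu^{d-1}$ for a suitable rate $a$, as in \cite{KiSi,Vy-06}), and optimising over $J,L$, one obtains the claimed $n^{-t+1/2-1/p_2}(\log n)^{(d-1)(t-1/p_1+1/p_2)}$ after letting $\varepsilon\to0$. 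The condition $t>1-\frac{1}{p_2}$ is exactly what makes the exponent $-t+\frac12-\frac1{p_2}$ the dominant term (the ``high smoothness'' regime H versus L in Figure~1), i.e. it guarantees the $\min\{1,m^{1-1/p_1}n^{-1/2}\}$ factor is the active one and the sum over $\mu>L$ in \eqref{ws-17} is negligible.

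For the lower estimate I would use Lemma \ref{low} together with part (ii) of Lemma \ref{wichtig1}, since $p_2\ge2$: \eqref{case2} gives
\[
c_n(id^*)\gtrsim 2^{\mu(-t+\frac1{p_1}-\frac1{p_2})}c_n(id_{p_1,p_2}^{D_\mu}).
\]
Now choose $\mu$ and $n$ coupled in the right proportion — concretely $n\asymp D_\mu^{2/p_1'}\cdot(\text{log correction})$, i.e. $n$ of order $2^{\mu\cdot 2/p_1'}$ up to powers of $\mu$, chosen so that the ``$\sqrt{1-n/m}$''-type factor in \eqref{wth4} for $c_n(id_{p_1,p_2}^{D_\mu})$ sits at a constant — and read off $c_n(id_{p_1,p_2}^{D_\mu})\asymp D_\mu^{1-1/p_1}n^{-1/2}$ from \eqref{wth4} (valid since $t>1-\frac1{p_2}$ keeps us away from the $m^{1/p_2-1/p_1}$ branch). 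Substituting $D_\mu\asymp\mu^{d-1}2^\mu$ and the chosen $n$, then re-expressing $2^\mu\asymp n/(\log n)^{d-1}$ and $\mu\asymp\log n$, yields the matching lower bound $c_n(id^*)\gtrsim n^{-t+1/2-1/p_2}(\log n)^{(d-1)(t-1/p_1+1/p_2)}$ along the subsequence $n\asymp\mu^{d-1}2^\mu$; monotonicity of Gelfand numbers then extends it to all $n\ge2$, exactly as at the end of Proposition \ref{mot}.

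The main obstacle is the bookkeeping in the upper bound: one must split the budget $n$ among the dyadic levels $\mu\in(J,L]$ so that each term $c_{n_\mu}(id_\mu^*)$ contributes at the same order, and then optimise the two free cut-off parameters $J$ and $L$ against each other and against the tail \eqref{ws-17}. Getting the $(\log n)$-power exponent $(d-1)(t-\frac1{p_1}+\frac1{p_2})$ correct (rather than off by $(d-1)/2$, a notoriously delicate point in this area, cf. the Remark after Theorem \ref{ex}) requires tracking the $\mu^{d-1}$ factors from $D_\mu$ through the geometric allocation with some care. A secondary subtlety is the borderline $p_2=2$, where \eqref{case5} with $p_2-\varepsilon<2$ would push into the $1<p_1<p_2-\varepsilon\le2$ regime \eqref{wth2}; there I would instead invoke \eqref{case1} directly at $p_2=2$, which costs only the harmless factor $\mu^{(d-1)(-1/p_2+1/2)}=1$, so no loss occurs and the same optimisation goes through.
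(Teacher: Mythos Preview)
Your upper-bound plan coincides with the paper's: use \eqref{case5} with $\varepsilon>0$ small enough that $p_2-\varepsilon>2$ (and \eqref{case1} directly when $p_2=2$), insert \eqref{wth4} into \eqref{sum1}, allocate $n_\mu=[D_\mu\,2^{(J-\mu)\lambda}]$ with $\lambda>1$, and use $t>1-\tfrac{1}{p_2}$ to choose $\lambda$ so that $-t+\tfrac12-\tfrac{1}{p_2}+\tfrac{\lambda}{2}<0$. One remark: no limit $\varepsilon\to0$ is needed, since the bound $c_{n_\mu}(id_{p_1,p_2-\varepsilon}^{D_\mu})\lesssim D_\mu^{1-1/p_1}n_\mu^{-1/2}$ coming from \eqref{wth4} already does not involve $p_2-\varepsilon$.

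For the lower bound the paper takes a shortcut you do not: it simply invokes $x_n\le c_n$ together with the Weyl-number asymptotics from \cite{KiSi}. Your direct route via \eqref{eqlow1}, \eqref{case2} and \eqref{wth4} is a legitimate alternative, but the execution contains an internal contradiction. You propose $n\asymp D_\mu^{2/p_1'}$ and then, two lines later, ``re-express $2^\mu\asymp n/(\log n)^{d-1}$''; these are incompatible because $2/p_1'<1$ here. The choice $n\asymp D_\mu^{2/p_1'}$ is the one used in Proposition~\ref{ba} (the low-smoothness regime) and yields only
\[
c_n(id^*)\ \gtrsim\ n^{-\frac{p_1'}{2}(t-\frac{1}{p_1}+\frac{1}{p_2})}(\log n)^{(d-1)(t-\frac{1}{p_1}+\frac{1}{p_2})},
\]
which for $t>1-\tfrac{1}{p_2}$ is strictly weaker than the claimed bound. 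The relation $2^\mu\asymp n/(\log n)^{d-1}$ corresponds instead to $n\asymp D_\mu$ (e.g.\ $n=[D_\mu/2]$), and \emph{that} is the correct choice in the high-smoothness regime: then \eqref{wth4} gives $c_n(id_{p_1,p_2}^{D_\mu})\asymp D_\mu^{1/2-1/p_1}$, and combining with \eqref{case2} produces exactly $n^{-t+1/2-1/p_2}(\log n)^{(d-1)(t-1/p_1+1/p_2)}$. Note also that the branch selection inside \eqref{wth4} is governed by the relation between $n$ and $D_\mu$, not by $t$; the hypothesis $t>1-\tfrac{1}{p_2}$ enters only afterwards, when comparing the two candidate rates.
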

\begin{proof} First, the lower estimate follows from the relation $x_n \leq c_n $, see \eqref{xca}. We refer to \cite{KiSi} for the behaviour of Weyl numbers of this embedding. Now we turn to estimate from above. If $2<p_2$ we choose $\varepsilon>0$ such that $2<p_2-\varepsilon$. From \eqref{sum1} and  \eqref{case5} we have
\begin{equation}\label{sum1-1}
c_n(id^*)\lesssim \sum_{\mu=J+1}^{L} 2^{\mu(-t+\frac{1}{p_1}-\frac{1}{p_2})} \, c_{\mu}(id_{p_1,p_2-\varepsilon}^{D_{\mu}})+ 2^{-L\big(t- \frac{1}{p_1}+\frac{1}{p_2} \big)}.
\end{equation}
In case $p_2=2$ we choose $\varepsilon=0$ and use \eqref{case1}, then the estimate \eqref{sum1-1} still holds true. Now we define $$n_{\mu}=\big[D_{\mu}2^{(J-\mu)\lambda}\big], \qquad \mu=J+1,...,L.$$ 
Here $\lambda>1$ will be chosen later on. This together with \eqref{ws-23} guarantees that 
\be \label{2^J}
n=\sum_{\mu=0}^{L}(n_{\mu}-1)+1\asymp J^{d-1}2^J.
\ee 
In a view of \eqref{wth2} and \eqref{wth4} we have
\beqq
\begin{split} 
2^{\mu(-t+\frac{1}{p_1}-\frac{1}{p_2})} \, c_{\mu}(id_{p_1,p_2-\varepsilon}^{D_{\mu}}) 
& \lesssim 2^{\mu(-t+\frac{1}{p_1}-\frac{1}{p_2})} D_{\mu}^{1-\frac{1}{p_1}}[D_{\mu}2^{(J-\mu)\lambda}]^{-\frac{1}{2}}\\
& \lesssim 2^{\mu(-t+\frac{1}{2}-\frac{1}{p_2}+\frac{\lambda}{2})} \mu^{(d-1)(\frac{1}{2}-\frac{1}{p_1})} 2^{-\frac{J}{2}}.\\
\end{split}
\eeqq
Since $t>1-\frac{1}{p_2}$ we can choose $\lambda>1$ such that $-t+\frac{1}{2}-\frac{1}{p_2}+\frac{\lambda}{2}<0$. Consequently
\beqq
\begin{split}
 \sum_{\mu=J+1}^{L} 2^{\mu(-t+\frac{1}{p_1}-\frac{1}{p_2})} \, c_{\mu}(id_{p_1,p_2-\varepsilon}^{D_{\mu}}) 
 &\lesssim 2^{J(-t+\frac{1}{2}-\frac{1}{p_2}+\frac{\lambda}{2})} J^{(d-1)(\frac{1}{2}-\frac{1}{p_1})} 2^{-\frac{J}{2}}\\
&\lesssim 2^{J(-t+\frac{1}{2}-\frac{1}{p_2} )} J^{(d-1)(\frac{1}{2}-\frac{1}{p_1})}   .
\end{split}
\eeqq
Now choosing $L$ in \eqref{sum1-1} large enough and using $n\asymp 2^JJ^{d-1}$ we obtain
\beqq
c_{c2^JJ^{d-1}}(id^*)\lesssim 2^{J(-t+\frac{1}{2}-\frac{1}{p_2} )} J^{(d-1)(\frac{1}{2}-\frac{1}{p_1})}.
\eeqq
Substituting $n=c2^JJ^{d-1}$ in this inequality and using monotonicity arguments we get the estimate from above.
\end{proof}
\begin{proposition}\label{ba} Let $1<p_1\leq 2<p_2$ and $\frac{1}{p_1}-\frac{1}{p_2}<t<1-\frac{1}{p_2}$. Then
\[ 
c_n(id^* )\asymp n^{-\frac{p_1'}{2}(t-\frac{1}{p_1}+\frac{1}{p_2})}(\log n)^{(d-1)(t-\frac{1}{p_1}+\frac{1}{p_2})} \, , \qquad n\geq 2.
\]
\end{proposition}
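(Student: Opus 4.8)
The plan is to establish matching lower and upper bounds for $c_n(id^*:s^{t,\Omega}_{p_1,p_1}f\to s^{0,\Omega}_{p_2,2}f)$, which is equivalent to the statement by Lemma~\ref{weyl}. Throughout write $\gamma:=t-\frac{1}{p_1}+\frac{1}{p_2}$; then $0<\gamma<\frac{1}{p_1'}$, since the hypothesis $t<1-\frac{1}{p_2}$ is exactly $\gamma<\frac{1}{p_1'}$, and $\alpha=\frac{p_1'}{2}\gamma$, $\beta=\gamma$. The inequality $\gamma<\frac{1}{p_1'}$ is the structural fact driving the whole argument.

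For the lower bound I would combine Lemma~\ref{low} with \eqref{case2} (legitimate since $p_2\ge2$): for every $\mu\in\N_0$,
\[
c_n(id^*)\ \ge\ c_n(id_\mu^*)\ \gtrsim\ 2^{-\mu\gamma}\,c_n(id_{p_1,p_2}^{D_\mu})\,.
\]
Given $n$, pick $\mu$ with $D_\mu^{2/p_1'}\asymp n$ by Lemma~\ref{ba1}(i) and take $n=c\,D_\mu^{2/p_1'}$ for a small absolute constant $c$; since $2/p_1'\le1$ this keeps $n\le D_\mu$. Then $D_\mu^{1-1/p_1}n^{-1/2}=c^{-1/2}\ge1$ and $1-n/D_\mu\asymp1$, so Gluskin's formula \eqref{wth4} (applicable because $1<p_1\le2<p_2$) gives $c_n(id_{p_1,p_2}^{D_\mu})\asymp1$. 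From $D_\mu\asymp2^\mu\mu^{d-1}$ one reads off $2^\mu\asymp n^{p_1'/2}\mu^{-(d-1)}$ and $\mu\asymp\log n$, whence $c_n(id^*)\gtrsim2^{-\mu\gamma}\asymp n^{-p_1'\gamma/2}(\log n)^{(d-1)\gamma}=n^{-\alpha}(\log n)^{(d-1)\beta}$; monotonicity of Gelfand numbers extends this to all $n\ge2$.

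For the upper bound I would start from the decomposition \eqref{sum1}; because $p_1\le2$ the exponent $(\tfrac12-\tfrac1{p_1})_+$ vanishes and the tail is $2^{-L\gamma}$. Applying \eqref{case5} with a small $\varepsilon>0$ for which $2<p_2-\varepsilon$, followed by \eqref{wth4}, yields, for $n_\mu$ in the range $D_\mu^{2/p_1'}\lesssim n_\mu\le D_\mu$,
\[
c_{n_\mu}(id_\mu^*)\ \lesssim\ 2^{-\mu\gamma}\,D_\mu^{1-1/p_1}\,n_\mu^{-1/2}\,.
\]
Fix $n$, put $\varepsilon_0:=n^{-\alpha}(\log n)^{(d-1)\beta}$, let $\mu^*\asymp\log n$ solve $D_{\mu^*}^{2/p_1'}\asymp n$ (so $\varepsilon_0\asymp2^{-\mu^*\gamma}$), choose $\rho\in\big(2^{-(1/p_1'-\gamma)},1\big)$ — possible exactly because $\gamma<\frac1{p_1'}$ — set $L:=\mu^*$, and allocate
\[
n_\mu:=\min\Big\{\big\lceil D_\mu^{2/p_1'}\,\varepsilon_0^{-2}\,\rho^{-2(\mu^*-\mu)}\,2^{-2\mu\gamma}\big\rceil,\ D_\mu+1\Big\},\qquad 0\le\mu\le L\,.
\]
On the levels where the minimum is $D_\mu+1$ one has $c_{n_\mu}(id_\mu^*)=0$; on the others the displayed estimate forces $c_{n_\mu}(id_\mu^*)\lesssim\varepsilon_0\,\rho^{\mu^*-\mu}$, which sums over $\mu$ to $\lesssim\varepsilon_0$. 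The choice $\rho>2^{-(1/p_1'-\gamma)}$ makes $\mu\mapsto\lceil\,\cdot\,\rceil$ nondecreasing, so $\sum_\mu n_\mu\asymp D_{\mu^*}^{2/p_1'}\asymp n$ and the constraint $n-1=\sum_\mu(n_\mu-1)$ holds after rescaling $n$ by an absolute constant; finally $2^{-L\gamma}=2^{-\mu^*\gamma}\asymp\varepsilon_0$ by \eqref{ws-17}. Altogether $c_n(id^*)\lesssim\varepsilon_0$, and Lemma~\ref{weyl} transfers this to $\mathrm{App}$.

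The delicate point, and the main obstacle, is the bookkeeping in the upper bound: one has to verify that the allocated $n_\mu$ indeed lie in the regime $D_\mu^{2/p_1'}\lesssim n_\mu\le D_\mu$ in which \eqref{wth4} simplifies as used — this relies on $p_1\le2$ together with $p_1\le p_2-\varepsilon$, which gives $2/(p_2-\varepsilon)'\ge1$ — that the capping is active only for $\mu$ small so that it consumes only a bounded fraction of the budget, and that no spurious $(\log n)$-factor enters: the geometric weight $\rho^{\mu^*-\mu}$ is inserted precisely so that the $O(\log n)$ active levels contribute only an $O(1)$ multiple of $\varepsilon_0$. The border case $p_1=2$ (then $2/p_1'=1$ and $\rho\in(2^{-(1/2-\gamma)},1)$) is covered by the same computations.
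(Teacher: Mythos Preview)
Your proposal is correct and follows essentially the same route as the paper: the lower bound via Lemma~\ref{low}, \eqref{case2} and Gluskin's estimate \eqref{wth4} at $n\asymp D_\mu^{2/p_1'}$ coincides with the paper's Step~1, and the upper bound uses the same decomposition \eqref{sum1}, the reduction \eqref{case5}, and the bound $c_{n_\mu}(id^{D_\mu}_{p_1,p_2-\varepsilon})\lesssim D_\mu^{1/p_1'}n_\mu^{-1/2}$ from \eqref{wth4}. The only difference is cosmetic --- you parametrize by the truncation level $\mu^*=L$ and let the capping at $D_\mu+1$ implicitly determine the full-rank cutoff, while the paper fixes the cutoff $J$, sets $n_\mu=[D_\mu\,2^{(\mu-L)\beta+J-\mu}]$ on $J<\mu\le L$, and chooses $L=[\tfrac{p_1'}{2}J+(\tfrac{p_1'}{2}-1)(d-1)\log J]$; in both versions the hypothesis $t<1-\tfrac{1}{p_2}$ (your $\gamma<\tfrac{1}{p_1'}$) is exactly what makes the geometric sum over levels converge to the correct order.
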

\begin{proof}{\it Step 1.} Estimate from below. Since $p_2\geq 2$ we employ \eqref{eqlow1} and \eqref{case2}  to obtain
\beqq
2^{\mu( -t+\frac{1}{p_1} -\frac{1}{p_2})}\, c_n(id_{p_1,p_2}^{D_\mu}) \lesssim
c_n(id^*).
\eeqq
By choosing $n=[D_{\mu}^{2/p_1'}]$ we have from \eqref{wth4}
\beqq
2^{\mu( -t+\frac{1}{p_1} -\frac{1}{p_2})}  \lesssim
c_n(id^*).
\eeqq
Because of $2^{\mu}\asymp n^{\frac{p_1'}{2}}/(\log n)^{d-1}$ we arrive at 
\[ 
c_n(id^* )\gtrsim  n^{-\frac{p_1'}{2}(t-\frac{1}{p_1}+\frac{1}{p_2})}(\log n)^{(d-1)(t-\frac{1}{p_1}+\frac{1}{p_2})} \, .
\]
Again the monotonicity of Gelfand numbers implies the estimate for all $n\geq 2$.\\
{\it Step 2.} Estimate from above.  
We use the inequality \eqref{sum1-1}. Next we define
\be \label{n_mu}
 n_{\mu}:= \big[D_{\mu}\, 2^{\{(\mu-L)\beta+J-\mu \}}\big]\leq D_{\mu}/2\, , \qquad J+1 \le \mu \le L\, ,
\ee
 where $\beta >0$ will be fixed later on. This guarantees \eqref{2^J}. From \eqref{wth4} we obtain
 \beqq
 \begin{split} 
 2^{\mu(-t+\frac{1}{p_1}-\frac{1}{p_2})}c_n(id_{p_1,p_2-\varepsilon}^{D_{\mu}}) &\lesssim 2^{\mu(-t+\frac{1}{p_1}-\frac{1}{p_2})}  D_{\mu}^{\frac{1}{p_1'}} \big[D_{\mu}\, 2^{\{(\mu-L)\beta+J-\mu \}}\big]^{-\frac{1}{2}} \\
 & = 2^{\mu(-t+1-\frac{1}{p_2}-\frac{\beta}{2})}   2^{\frac{L\beta-J}{2}} \mu^{(d-1)(\frac{1}{p_1'}-\frac{1}{2})}.
 \end{split}
 \eeqq
 Because of $t<1-\frac{1}{p_2}$, we can choose $\beta>0$ small enough such that $-t+1-\frac{1}{p_2}-\frac{\beta}{2}>0 $. Hence
 \beqq
 \begin{split} 
 \sum_{\mu=J+1}^{L} \, 2^{\mu(-t+\frac{1}{p_1}-\frac{1}{p_2})} \, c_n(id_{p_1,p_2-\varepsilon}^{D_{\mu}}) & \lesssim 2^{L(-t+1-\frac{1}{p_2}-\frac{\beta}{2})}   2^{\frac{L\beta-J}{2}} L^{(d-1)(\frac{1}{p_1'}-\frac{1}{2})}\\
 & = 2^{L(-t+1-\frac{1}{p_2} )}   2^{-\frac{J}{2}} L^{(d-1)(\frac{1}{p_1'}-\frac{1}{2})}.
 \end{split}
 \eeqq
We define 
 \be\label{L}
 L=\Big[\frac{p_1'}{2}J + (\frac{p_1'}{2}-1)(d-1)\log J\Big]
 \ee
 which leads to
 \beqq
 \begin{split} 
 \sum_{\mu=J+1}^{L} \, 2^{\mu(-t+\frac{1}{p_1}-\frac{1}{p_2})} \, c_n(id_{p_1,p_2-\varepsilon}^{D_{\mu}}) 
 & \lesssim  2^{\frac{Jp_1'}{2}(-t+\frac{1}{p_1}-\frac{1}{p_2})}J^{(d-1)(\frac{p_1'}{2}-1)(-t+\frac{1}{p_1}-\frac{1}{p_2})}.
 \end{split}
 \eeqq
 Inserting this and \eqref{L} into \eqref{sum1-1} we have found
 \beqq
 c_n(id^*) \lesssim 2^{\frac{Jp_1'}{2}(-t+\frac{1}{p_1}-\frac{1}{p_2})}J^{(d-1)(\frac{p_1'}{2}-1)(-t+\frac{1}{p_1}-\frac{1}{p_2})} 
 \eeqq
for $n\asymp 2^JJ^{d-1}$. Employing monotonicity arguments we finish the proof.
\end{proof}
\begin{remark}\label{app}
\rm Because of
\beqq
a_n(id_{p_1,p_2}^{m})\asymp \max\Big\{m^{\frac{1}{p_2}-\frac{1}{p_1}},\min\{1,m^{1-\frac{1}{p_1}}n^{-\frac{1}{2}}\}  \sqrt{1-n/m}  \Big\}  
\eeqq
if $ 2\leq p_1'<p_2$, see \cite{Glus}, by  similar argument as above, we obtain part (v) in Theorem \ref{known}.
\end{remark}
%&&&&&&&&&&&&&&&&&&&&&&&&&&&&&&&&&&&&&&&&&&&&&&&&&&&&&&&&&&&&&&&&&&&&&&&&&&&&&&&&&&&&&&&&&&&&&&&&&&&&&&&&&&&&&&&&&&&&&&

\begin{proposition}\label{bon} Let $1<p_1<  p_2\leq 2$ and $\frac{1}{p_1}-\frac{1}{p_2}<t< \frac{1/p_1-1/p_2}{2/p_1-1}$. Then
\[ 
c_n(id^* )\asymp n^{-\frac{p_1'}{2}(t-\frac{1}{p_1}+\frac{1}{p_2})}(\log n)^{(d-1)(t-\frac{1}{p_1}+\frac{1}{p_2})} \, , \qquad n\geq 2.
\]
\end{proposition}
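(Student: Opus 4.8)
The plan is to rerun the proof of Proposition~\ref{ba} with \eqref{wth4} replaced by \eqref{wth2}. The two bounds on $t$ play exactly the roles they did there: $t>\frac1{p_1}-\frac1{p_2}$ makes the tail of the splitting converge, and $t<\gamma:=\frac{1/p_1-1/p_2}{2/p_1-1}$ makes the building-block sum be dominated by its top index.

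\emph{Lower bound.} First I would combine \eqref{eqlow1} with \eqref{case6}, choosing $\varepsilon>0$ so small that $p_1<p_2+\varepsilon\le 2$ (for $p_2=2$ one takes $\varepsilon=0$ and uses \eqref{case1}, whose loss factor $\mu^{(d-1)(\frac12-\frac1{p_2})}$ then equals $1$), to obtain, for all $\mu\in\N_0$ and $n\in\N$,
\[
2^{\mu(-t+\frac1{p_1}-\frac1{p_2})}\,c_n\big(id_{p_1,p_2+\varepsilon}^{D_\mu}\big)\ \lesssim\ c_n(id^*).
\]
For $n=\big[D_\mu^{2/p_1'}\big]$ one has $D_\mu^{1-1/p_1}n^{-1/2}\ge 1$, so \eqref{wth2} (applicable since $p_1<p_2+\varepsilon\le 2$) gives $c_n(id_{p_1,p_2+\varepsilon}^{D_\mu})\asymp 1$, whence $c_n(id^*)\gtrsim 2^{-\mu(t-\frac1{p_1}+\frac1{p_2})}$. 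Since $D_\mu\asymp\mu^{d-1}2^\mu$, for this $n$ one has $2^\mu\asymp n^{p_1'/2}(\log n)^{-(d-1)}$ and $\mu\asymp\log n$, which yields the asserted lower bound along $n\asymp D_\mu^{2/p_1'}$; monotonicity of Gelfand numbers extends it to all $n\ge 2$.

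\emph{Upper bound.} In the decomposition \eqref{sum1} the factor $L^{(d-1)(\frac12-\frac1{p_1})_+}$ disappears ($p_1<2$), and the upper estimate of \eqref{case1} (valid since $p_2\le 2$) gives
\[
c_n(id^*)\ \lesssim\ \sum_{\mu=J+1}^{L}2^{-\mu(t-\frac1{p_1}+\frac1{p_2})}\,c_{n_\mu}\big(id_{p_1,p_2}^{D_\mu}\big)\ +\ 2^{-L(t-\frac1{p_1}+\frac1{p_2})}.
\]
I would keep the allocation $n_\mu:=\big[D_\mu\,2^{(\mu-L)\beta+J-\mu}\big]$, $J+1\le\mu\le L$, with $\beta>0$ small (so that $n_\mu\le D_\mu/2$ and $n=\sum_{\mu=0}^{L}(n_\mu-1)+1\asymp 2^J J^{d-1}$), and the cut-off $L=\big[\frac{p_1'}{2}J+(\frac{p_1'}{2}-1)(d-1)\log J\big]$ of \eqref{L}. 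For these indices $D_\mu^{1-1/p_1}n_\mu^{-1/2}\asymp D_\mu^{-a}\,2^{-((\mu-L)\beta+J-\mu)/2}$ with $a=\frac1{p_1}-\frac12$, and this stays $\lesssim 1$, so \eqref{wth2} gives $c_{n_\mu}(id_{p_1,p_2}^{D_\mu})\asymp\big(D_\mu^{1-1/p_1}n_\mu^{-1/2}\big)^{2\gamma}$. Inserting this and using $2a\gamma=\frac1{p_1}-\frac1{p_2}$ to cancel the power $D_\mu^{-2a\gamma}$, one arrives at
\[
2^{-\mu(t-\frac1{p_1}+\frac1{p_2})}\,c_{n_\mu}(id_{p_1,p_2}^{D_\mu})\ \lesssim\ 2^{\mu(\gamma-t-\gamma\beta)}\,2^{\gamma(L\beta-J)}\,\mu^{-(d-1)(\frac1{p_1}-\frac1{p_2})}.
\]
Since $t<\gamma$, for $\beta$ small the exponent $\gamma-t-\gamma\beta$ is positive, so the sum is dominated by $\mu=L$; the $\beta$-terms then cancel, and the algebraic identity $\gamma=\frac{p_1'/2}{p_1'/2-1}\big(\frac1{p_1}-\frac1{p_2}\big)$ makes the choice of $L$ turn both the sum and the tail into $2^{-\frac{p_1'}{2}J(t-\frac1{p_1}+\frac1{p_2})}\,J^{-(d-1)(\frac{p_1'}{2}-1)(t-\frac1{p_1}+\frac1{p_2})}$. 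Substituting $n\asymp 2^J J^{d-1}$ and using monotonicity finishes the estimate.

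\emph{Main obstacle.} The one genuinely delicate point is the control of the minimum in \eqref{wth2}: with the above $n_\mu$ and $L$, the quantity $D_\mu^{1-1/p_1}n_\mu^{-1/2}$ reaches $1$ only within $O(1)$ of $\mu=L$, so one must check that it is $\lesssim 1$ on the bulk of $[J+1,L]$ (where the formula above is used) and that the $O(1)$ top indices, where one may land on the trivial branch $c_{n_\mu}(id_{p_1,p_2}^{D_\mu})\asymp 1$, only contribute $\lesssim 2^{-L(t-\frac1{p_1}+\frac1{p_2})}$, which has the same order as the tail and is harmless. Once this is in place, matching the power of $\log n$ is pure bookkeeping governed by the displayed identity. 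One can also sidestep the issue altogether by using instead $n_\mu\asymp D_\mu^{2/p_1'}2^{(L-\mu)\theta}$ with $\frac{t-1/p_1+1/p_2}{\gamma}<\theta<\frac{2}{p_1'}$ (this interval is nonempty because $t<\gamma$), for which $D_\mu^{1-1/p_1}n_\mu^{-1/2}\asymp 2^{-(L-\mu)\theta/2}\le 1$ automatically while $\sum_\mu n_\mu\asymp 2^J J^{d-1}$ is retained, and the same computation goes through cleanly.
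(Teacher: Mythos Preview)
Your proposal is correct and follows essentially the same route as the paper: the same lower-bound device via \eqref{case6} (resp.\ \eqref{case1} for $p_2=2$) with $n=[D_\mu^{2/p_1'}]$, and the same upper-bound machinery via \eqref{sum1}, \eqref{case1}, the allocation \eqref{n_mu}, and the cut-off \eqref{L}. The one point on which you spend extra effort---the ``main obstacle'' concerning which branch of the minimum in \eqref{wth2} is active---is in fact a non-issue: for the upper bound one only needs $c_{n_\mu}(id_{p_1,p_2}^{D_\mu})\lesssim\big(D_\mu^{1-1/p_1}n_\mu^{-1/2}\big)^{2\gamma}$, and this follows from $\min\{1,x\}\le x$ regardless of whether $x\le 1$; the paper uses this inequality directly without comment, so neither the branch check nor your alternative allocation is needed.
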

\begin{proof}{\it Step 1.} Estimate from below. If $p_2< 2$ we choose $\varepsilon>0$ such that $p_2+\varepsilon\leq 2$ and employ \eqref{case6} to obtain
\beqq
2^{\mu( -t+\frac{1}{p_1} -\frac{1}{p_2})}\, c_n(id_{p_1,p_2+\varepsilon}^{D_\mu}) \lesssim
c_n(id^*).
\eeqq
Now \eqref{wth2} with $n=\big[D_{\mu}^{2/p_1'}\big]$ leads to
\beqq
2^{\mu( -t+\frac{1}{p_1} -\frac{1}{p_2})}  \lesssim
c_n(id^*).
\eeqq
This implies the lower estimate  if $p_2<2$. By using \eqref{case1} and a similar argument we get the result for $p_2=2$ as well.\\
{\it Step 2.} Estimate from above.  Since $p_2\leq 2$ from \eqref{sum1} and \eqref{case1} we arrive at
 \begin{equation}\label{sum2}
 c_n(id^*)\lesssim \sum_{\mu=J+1}^{L} \, 2^{\mu(-t+\frac{1}{p_1}-\frac{1}{p_2})} \, c_n(id_{p_1,p_2}^{D_{\mu}})+ 2^{-L\big(t-\frac{1}{p_1}+\frac{1}{p_2}\big)}.
 \end{equation}
Next we define $n_{\mu}$, $\mu=J+1,...,L$, as in \eqref{n_mu}. Now \eqref{wth2} leads to
 \beqq
 \begin{split} 
 2^{\mu(-t+\frac{1}{p_1}-\frac{1}{p_2})}c_n(id_{p_1,p_2 }^{D_{\mu}}) &\lesssim 2^{\mu(-t+\frac{1}{p_1}-\frac{1}{p_2})} \Big( D_{\mu}^{1-\frac{1}{p_1}} \big[D_{\mu}\, 2^{\{(\mu-L)\beta+J-\mu \}}\big]^{-\frac{1}{2}}\Big)^{\frac{1/p_1-1/p_2}{1/p_1-1/2}} \\
 & = 2^{\mu(-t + \frac{1/p_1-1/p_2}{2/p_1-1 }- \frac{1/p_1-1/p_2}{2/p_1-1 }\beta)}   \Big(2^{\frac{L\beta-J}{2}}\Big)^{\frac{1/p_1-1/p_2}{1/p_1-1/2 }} \mu^{(d-1)(\frac{1}{p_2}-\frac{1}{p_1})}.
 \end{split}
 \eeqq
 Because of $t< \frac{1/p_1-1/p_2}{2/p_1-1}$, we can choose $\beta>0$ small enough such that $$-t + \frac{1/p_1-1/p_2}{2/p_1-1 }- \frac{1/p_1-1/p_2}{2/p_1-1 }\beta>0 .$$ 
 Consequently
 \be \label{sum3}
 \begin{split} 
 \sum_{\mu=J+1}^{L} \, 2^{\mu(-t+\frac{1}{p_1}-\frac{1}{p_2})} \, c_n(id_{p_1,p_2}^{D_{\mu}}) & \lesssim 2^{L(-t + \frac{1/p_1-1/p_2}{2/p_1-1 }- \frac{1/p_1-1/p_2}{2/p_1-1 }\beta)}   \Big(2^{\frac{L\beta-J}{2}}\Big)^{\frac{1/p_1-1/p_2}{1/p_1-1/2 }} J^{(d-1)(\frac{1}{p_2}-\frac{1}{p_1})}\\
& \asymp 2^{L(-t + \frac{1/p_1-1/p_2}{2/p_1-1 } )}    2^{-J\frac{1/p_1-1/p_2}{2/p_1-1}} J^{(d-1)(\frac{1}{p_2}-\frac{1}{p_1})}
\\
& \asymp 2^{-tL  }    2^{(L-J)\frac{1/p_1-1/p_2}{2/p_1-1}} J^{(d-1)(\frac{1}{p_2}-\frac{1}{p_1})}.
 \end{split}
 \ee 
 Again we define 
 \beqq
 L:=\Big[\frac{p_1'}{2}J + (\frac{p_1'}{2}-1)(d-1)\log J\Big] \asymp \frac{p_1'}{2}\Big(\frac{2}{p_1}-1 \Big)J + \frac{p_1'}{2} \Big( \frac{2}{p_1}-1\Big)(d-1)\log J +J.
 \eeqq
Inserting this into \eqref{sum3} we find
 \beqq
 \begin{split} 
 \sum_{\mu=J+1}^{L} \, 2^{\mu(-t+\frac{1}{p_1}-\frac{1}{p_2})} \, c_n(id_{p_1,p_2}^{D_{\mu}}) 
& \lesssim 2^{-t\frac{p_1'}{2}  }  J^{(d-1)t(1-\frac{p_1'}{2})}  2^{\frac{p_1'}{2}(J+(d-1)\log J)(\frac{1}{p_1}-\frac{1}{p_2})} J^{(d-1)(\frac{1}{p_2}-\frac{1}{p_1})} \\
& = 2^{-J\frac{p_1'}{2}(t-\frac{1}{p_1}+\frac{1}{p_2})}J^{(d-1)(1-\frac{p_1'}{2})(t-\frac{1}{p_1}+\frac{1}{p_2})}.
 \end{split}
 \eeqq
This together with the special choice of $L$ leads to
\beqq
c_n(id^*) \lesssim 2^{-J\frac{p_1'}{2}(t-\frac{1}{p_1}+\frac{1}{p_2})}J^{(d-1)(1-\frac{p_1'}{2})(t-\frac{1}{p_1}+\frac{1}{p_2})}
\eeqq
for $n\asymp 2^JJ^{d-1}$, see \eqref{sum2}. Finally, we finish the proof by the standard monotonicity argument.
\end{proof}
\begin{proposition}\label{nam}
 Let $1<p_1,p_2<2$ and $ \max( 0, \frac{1/p_1-1/p_2}{2/p_1-1})<t<\frac{1}{2}$. Then we have
\beqq
c_n(id^* )\lesssim n^{-t}(\log n)^{(d-1)(2t-\frac{2t}{p_1})},\ \ n\geq 2.
\eeqq 
\end{proposition}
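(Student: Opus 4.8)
The plan is to follow the decomposition scheme already set up in the preceding propositions, starting from the master inequality \eqref{sum1}. Since both $p_1,p_2<2$, the relevant estimate for the building blocks is \eqref{case5}: choosing $\varepsilon>0$ small enough that $p_2-\varepsilon>1$ but $p_2-\varepsilon<2$, we have $c_n(id_\mu^*)\le 2^{\mu(-t+1/p_1-1/p_2)}c_n(id_{p_1,p_2-\varepsilon}^{D_\mu})$, and the identity $id_{p_1,p_2-\varepsilon}^{D_\mu}$ falls under case \eqref{wth2} since $1<p_1<p_2-\varepsilon<2$ (after possibly shrinking $\varepsilon$ we may also assume $p_1<p_2-\varepsilon$; if $p_1\ge p_2$ we are instead in the regime $p_2\le p_1$ and case \eqref{wth1} would apply, but the hypothesis $t>(1/p_1-1/p_2)/(2/p_1-1)$ together with $t<1/2$ forces $1/p_1<1/p_2$, i.e. $p_1<p_2$, so \eqref{wth2} is the right branch). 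Thus $c_n(id_{p_1,p_2-\varepsilon}^{D_\mu})\lesssim(\min\{1,D_\mu^{1-1/p_1}n^{-1/2}\})^{(1/p_1-1/(p_2-\varepsilon))/(1/p_1-1/2)}$, and the exponent $(1/p_1-1/(p_2-\varepsilon))/(1/p_1-1/2)$ can be made as close to $\theta:=(1/p_1-1/p_2)/(1/p_1-1/2)=\frac{2}{p_1'}\cdot\frac{1/p_1-1/p_2}{2/p_1-1}\cdot\frac{p_1'}{2}$... more simply, close to $(1/p_1-1/p_2)/(1/p_1-1/2)$.

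Next I would plug this into \eqref{sum1} and choose the allocation of the $n_\mu$ for $J+1\le\mu\le L$ exactly as in Proposition~\ref{ba}, i.e. $n_\mu:=[D_\mu 2^{\{(\mu-L)\beta+J-\mu\}}]$ with $\beta>0$ to be fixed, which guarantees $n\asymp 2^JJ^{d-1}$ via \eqref{2^J}. With this choice $D_\mu^{1-1/p_1}n_\mu^{-1/2}\asymp 2^{(\mu-L)\beta/2+ (J-\mu)/2}D_\mu^{1/2-1/p_1}$, and since $\mu\le L$ this is $\le 1$ when $\mu$ is not too far below $L$; on the range where it exceeds $1$ one uses the $\min$ and gets the trivial bound. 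Raising to the power $\approx(1/p_1-1/p_2)/(1/p_1-1/2)$ and multiplying by $2^{\mu(-t+1/p_1-1/p_2)}$, the $\mu$-dependence of the summand becomes $2^{\mu\,\gamma}$ times a power of $\mu$, where $\gamma=-t+\frac{1/p_1-1/p_2}{2/p_1-1}-\beta\cdot\frac{1/p_1-1/p_2}{2/p_1-1}$ up to the $\varepsilon$-perturbation; the hypothesis $t>\frac{1/p_1-1/p_2}{2/p_1-1}$ makes the \emph{first two terms} negative, hence $\gamma<0$ for $\beta,\varepsilon$ small, so the geometric sum over $J+1\le\mu\le L$ is dominated by its \emph{first} term at $\mu=J+1$. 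This yields $\sum_{\mu=J+1}^{L}2^{\mu(-t+1/p_1-1/p_2)}c_{n_\mu}(id_{p_1,p_2-\varepsilon}^{D_\mu})\lesssim 2^{J\gamma}\cdot 2^{(L\beta-J)\cdot\frac{1}{2}\cdot(1/p_1-1/p_2)/(1/p_1-1/2)}\cdot J^{c(d-1)}$ for suitable $c$.

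The point now is to choose $L$ (large relative to $J$) so that the second, tail term $2^{-L(t-1/p_1+1/p_2)}L^{(d-1)(1/2-1/p_1)_+}$ in \eqref{sum1} is of the same order as — or dominated by — the sum term, and simultaneously to track the $L\beta$ factor sitting inside the sum term. Taking $L\asymp \frac{p_1'}{2}J$ together with the right logarithmic correction $L=[\frac{p_1'}{2}J+(\frac{p_1'}{2}-1)(d-1)\log J]$ exactly as in \eqref{L}, substituting, and simplifying, should collapse the whole right-hand side of \eqref{sum1} to $2^{-tJ}J^{(d-1)\cdot 2t(1-1/p_1)}$ up to the $\varepsilon$-loss; letting $\varepsilon\to 0$ in the final exponent (the exponents depend continuously on $\varepsilon$ and the estimate holds for each fixed small $\varepsilon$, so one gets the limiting exponent by a standard argument, or simply by choosing $\varepsilon$ small enough that the displayed power of $\log n$ is achieved with an extra arbitrarily small slack absorbed into the $\lesssim$) gives $c_n(id^*)\lesssim 2^{-tJ}J^{(d-1)(2t-2t/p_1)}$. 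Finally, since $n\asymp 2^JJ^{d-1}$ we have $2^J\asymp n/(\log n)^{d-1}$ and $J\asymp\log n$, so $2^{-tJ}J^{(d-1)(2t-2t/p_1)}\asymp n^{-t}(\log n)^{(d-1)t}(\log n)^{(d-1)(2t-2t/p_1)}$ — wait, that would give an extra $(\log n)^{(d-1)t}$; I would need to double-check the bookkeeping here, because the claimed exponent is only $(d-1)(2t-2t/p_1)$, not $(d-1)(2t-2t/p_1+t)$. Re-examining, the resolution is presumably that the sum term produces $2^{-tJ}$ \emph{with no additional power of $J$ from the $2^J\mapsto n$ conversion} because the identification is $c_{n}(id^*)$ at $n\asymp 2^JJ^{d-1}$ and the left side is being expressed directly in $n$; concretely $n^{-t}=2^{-tJ}J^{-t(d-1)}$ so $2^{-tJ}=n^{-t}J^{t(d-1)}$, and matching against $n^{-t}(\log n)^{(d-1)(2t-2t/p_1)}$ forces the accumulated power of $J$ inside the sum term to be $(d-1)(2t-2t/p_1)-(d-1)t=(d-1)(t-2t/p_1)$; verifying that the logarithmic corrections from $L=[\frac{p_1'}{2}J+\dots]$ and from the $J^{c(d-1)}$ factor indeed combine to this exact power is the \textbf{main obstacle}, and it is the step I would carry out most carefully, following the template of \eqref{sum3} in the proof of Proposition~\ref{bon}. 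Once the logarithmic exponent is pinned down, the monotonicity argument extends the bound from $n\asymp 2^JJ^{d-1}$ to all $n\ge 2$, completing the proof.
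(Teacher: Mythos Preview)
Your proposal has two genuine gaps.

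\textbf{First}, the claim that the hypotheses force $p_1<p_2$ is false. If $p_2\le p_1$ then $\frac{1/p_1-1/p_2}{2/p_1-1}\le 0$, so the assumption reduces to $0<t<\tfrac12$ and says nothing about the ordering of $p_1,p_2$. The paper treats $p_2\le p_1$ separately, via the chain $s_{p_1,p_1}^{t,\Omega}f\hookrightarrow s_{p,2}^{0,\Omega}f\hookrightarrow s_{p_2,2}^{0,\Omega}f$ for a suitable $p\in(p_1,2)$, reducing to the case $p_1<p$.

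\textbf{Second --- and this is the essential obstacle ---} the allocation $n_\mu=[D_\mu\,2^{(\mu-L)\beta+J-\mu}]$ together with $L=[\tfrac{p_1'}{2}J+\ldots]$ does \emph{not} give the stated bound here. As you correctly observe, since $t>\frac{1/p_1-1/p_2}{2/p_1-1}$ the geometric sum is dominated at $\mu\approx J$, not at $\mu\approx L$. Evaluating the dominant term one gets
\[
\sum_{\mu=J+1}^L 2^{\mu(-t+\frac{1}{p_1}-\frac{1}{p_2})}c_{n_\mu}(id_{p_1,p_2}^{D_\mu})
\;\lesssim\; 2^{-Jt}\,2^{\beta(L-J)\frac{1/p_1-1/p_2}{2/p_1-1}}\,J^{(d-1)(\frac{1}{p_2}-\frac{1}{p_1})},
\]
and with $L-J\asymp(\tfrac{p_1'}{2}-1)J$ the middle factor is $2^{c\beta J}$ for some $c>0$, which destroys the estimate. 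No choice of $L\ge \frac{t}{t-1/p_1+1/p_2}J$ (needed to control the tail) is compatible with $L-J\lesssim\log J$ (needed to tame that factor). This is exactly the discrepancy you ran into in your bookkeeping check; it is not a bookkeeping slip but a structural failure of this scheme in the present regime.

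The paper's remedy is a \emph{three}-range split $J<K<L$ with the near point
\[
K=\Big[J+\Big(\tfrac{2}{p_1}-1\Big)(d-1)\log J\Big],
\]
and \emph{different targets} on the two inner ranges: on $J+1\le\mu\le K$ one uses the estimate with target $\ell_2$ (i.e.\ $c_{n_\mu}(id_{p_1,2}^{D_\mu})$), where $t<\tfrac12$ makes the sum dominated at the \emph{top} $\mu=K$; on $K+1\le\mu\le L$ one uses target $\ell_{p_2}$, where $t>\frac{1/p_1-1/p_2}{2/p_1-1}$ makes the sum dominated at the \emph{bottom} $\mu=K$. Both pieces collapse to $\asymp 2^{-Kt}$, and the specific choice of $K-J$ is precisely what makes the residual powers of $J$ cancel to give $2^{-Kt}\asymp 2^{-Jt}J^{(d-1)(t-2t/p_1)}$, i.e.\ $n^{-t}(\log n)^{(d-1)(2t-2t/p_1)}$. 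Also note that since $p_2\le 2$ you can use \eqref{case1} directly and avoid the $\varepsilon$-perturbation altogether.
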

\begin{proof}
 {\it Step 1.} The case $p_1<p_2<2$ and $\frac{1/p_1-1/p_2}{2/p_1-1}<t<\frac{1}{2}$. We split the sum in \eqref{sum1} into two terms
\be \label{sum_5}
\begin{split} 
 c_n(id^*)&\lesssim \sum_{\mu=J+1}^{K}c_{n_{\mu}}(id_{\mu}^*) +\sum_{\mu=K+1}^{L} \, c_{n_{\mu}}(id_{\mu}^*)+ 2^{-L\big(t- \frac{1}{p_1}+\frac{1}{p_2} \big)} \\
 & \lesssim \sum_{\mu=J+1}^{K}2^{\mu(-t+\frac{1}{p_1}-\frac{1}{2})}   c_n(id_{p_1,2}^{D_{\mu}}) +\sum_{\mu=K+1}^{L} 2^{\mu(-t+\frac{1}{p_1}-\frac{1}{p_2})}   c_n(id_{p_1,p_2}^{D_{\mu}})+ 2^{-L\big(t- \frac{1}{p_1}+\frac{1}{p_2} \big)},
 \end{split}
\ee 
see \eqref{case1}. We define 
$$K=\Big[J+\Big(\frac{2}{p_1}-1\Big)(d-1)\log J\Big]$$
and
\[
n_{\mu} : =
\begin{cases}
\big[D_{\mu} \, 2^{(\mu-K)\beta+J-\mu}\big] & \qquad \text{if}\quad J+1 \le \mu \leq K\, , \\
\big[J^{d-1}2^{J}\, 2^{(K-\mu)\gamma}\big] & \qquad \text{if}\quad K+1 \le \mu \leq L\, . 
\end{cases}
\]
Here $\beta, \, \gamma>0$ will be fixed later. The condition $\beta, \, \gamma>0$ implies \eqref{2^J}. We estimate the first sum on the right-hand side of \eqref{sum_5}. We have
\beqq
 \begin{split} 
  \sum_{\mu=J+1}^{K}2^{\mu(-t+\frac{1}{p_1}-\frac{1}{2})}   c_n(id_{p_1,2}^{D_{\mu}})&\lesssim \sum_{\mu=J+1}^{K} \, 2^{\mu(-t+\frac{1}{p_1}-\frac{1}{2})} \,   D_{\mu}^{1-\frac{1}{p_1}}\big[D_{\mu}\, 2^{\{(\mu-K)\beta+J-\mu \}}\big]^{-\frac{1}{2}} \\
  & \asymp \sum_{\mu=J+1}^{K} \, 2^{\mu(-t+\frac{1}{2}-\frac{\beta}{2} )} \,   2^{\frac{K\beta-J}{2}} \mu^{(d-1)(\frac{1}{2}-\frac{1}{p_1})} ,
 \end{split}
 \eeqq
see \eqref{wth2}. Since $t<\frac{1}{2}$ we can choose $\beta>0$ such that $-t+\frac{1}{2}-\frac{\beta}{2}>0$. Consequently we obtain
  \be \label{J-K}
  \begin{split} 
  \sum_{\mu=J+1}^{K}2^{\mu(-t+\frac{1}{p_1}-\frac{1}{2})}   c_n(id_{p_1,2}^{D_{\mu}})&\lesssim   2^{K(-t+\frac{1}{2}-\frac{\beta}{2} )} \,   2^{\frac{K\beta-J}{2}} K^{(d-1)(\frac{1}{2}-\frac{1}{p_1})}  \\
   &\asymp 2^{-Kt}2^{\frac{K-J}{2}}K^{(d-1)(\frac{1}{2}-\frac{1}{p_1})} .
  \end{split}
  \ee
 Now we deal with the second sum on the right-hand side of \eqref{sum_5}. From \eqref{wth2} we have 
\beqq
\begin{split} 
\sum_{\mu=K+1}^{L} 2^{\mu(-t+\frac{1}{p_1}-\frac{1}{p_2})}   &c_n(id_{p_1,p_2}^{D_{\mu}}) \lesssim \sum_{\mu=K+1}^{L} 2^{\mu(-t+\frac{1}{p_1}-\frac{1}{p_2})}\Big( D_{\mu}^{1-\frac{1}{p_1}} \big[J^{d-1}2^{J}\, 2^{(K-\mu)\gamma}\big]^{-\frac{1}{2}}\Big)^{\frac{1/p_1-1/p_2}{1/p_1-1/2}} \\
& \asymp \sum_{\mu=K+1}^{L} 2^{\mu(-t + \frac{1/p_1-1/p_2}{2/p_1-1 }- \frac{1/p_1-1/p_2}{2/p_1-1 }\gamma)}   \Big(\mu^{(d-1)(1-\frac{1}{p_1})} \big[J^{d-1}2^{J}\, 2^{K\gamma}\big]^{-\frac{1}{2}}\Big)^{\frac{1/p_1-1/p_2}{1/p_1-1/2 }}.
\end{split}
\eeqq
Since $t> \frac{1/p_1-1/p_2}{2/p_1-1}$ we can choose $\gamma>0$ such that $-t + \frac{1/p_1-1/p_2}{2/p_1-1 }- \frac{1/p_1-1/p_2}{2/p_1-1 }\gamma<0$. This leads to
\be \label{K-L}
\begin{split} 
\sum_{\mu=K+1}^{L} 2^{\mu(-t+\frac{1}{p_1}-\frac{1}{p_2})}   c_n(id_{p_1,p_2}^{D_{\mu}}) &\lesssim 2^{K(-t + \frac{1/p_1-1/p_2}{2/p_1-1 }- \frac{1/p_1-1/p_2}{2/p_1-1 }\gamma)}   \Big(K^{(d-1)(1-\frac{1}{p_1})} \big[J^{d-1}2^{J}\, 2^{K\gamma}\big]^{-\frac{1}{2}}\Big)^{\frac{1/p_1-1/p_2}{1/p_1-1/2 }}\\
& \asymp 2^{-Kt} \Big(K^{(d-1)(1-\frac{1}{p_1})} J^{-\frac{d-1}{2}}  2^{\frac{K-J}{2}}\Big)^{\frac{1/p_1-1/p_2}{1/p_1-1/2 }}\\
&\asymp 2^{-Kt} \Big(J^{(d-1)(\frac{1}{2}-\frac{1}{p_1})}   2^{\frac{K-J}{2}}\Big)^{\frac{1/p_1-1/p_2}{1/p_1-1/2 }}.
\end{split}
\ee 
The last line is due to $J<K< dJ$. Replacing $K$ into \eqref{J-K} and \eqref{K-L} we arrive at
\beqq
\sum_{\mu=J+1}^{L} c_{n_{\mu}}(id_{\mu}^*) \lesssim 2^{-Kt}\asymp   2^{-Jt}J^{(d-1)(t-\frac{2t}{p_1})}.
\eeqq
Choosing $L$ large enough we have proved 
\beqq
c_n(id^*) \lesssim 2^{-Jt}J^{(d-1)(t-\frac{2t}{p_1})}
\eeqq
for $n\asymp 2^JJ^{d-1}$. By monotonicity of Gelfand numbers we finish the proof in this case.\\
{\it Step 2.} The case $p_2\leq p_1<2$ and $0<t<\frac{1}{2}$. Since $t>0$ we can choose $p_1<p<2$ such that $\frac{1/p_1-1/p}{2/p_1-1}<t<\frac{1}{2}$. We consider the chain of embeddings
\beqq
 s_{p_1,p_1}^{t,\Omega}f\hookrightarrow s_{p,2}^{0,\Omega}f\hookrightarrow s_{p_2,2}^{0,\Omega}f
\eeqq
and obtain 
\beqq
c_n(id^*) \leq c_n(id: s_{p_1,p_1}^{t,\Omega}f\to s_{p,2}^{0,\Omega}f) \cdot \| id : s_{p,2}^{0,\Omega}f\to s_{p_2,2}^{0,\Omega}f\| \lesssim c_n(id: s_{p_1,p_1}^{t,\Omega}f\to s_{p,2}^{0,\Omega}f),
\eeqq
see property (c) of the $s$-numbers. Finally the result in Step 1 implies the desired estimate. The proof is complete.
\end{proof}
\subsection{Proof of the main results}
We are now in position to prove Theorems \ref{main}, \ref{ex}, \ref{ex-3} and Proposition \ref{conj}.
\vskip 3mm
\noindent
{\bf Proof of Theorem \ref{main}.} The cases $\max(2,p_2)\leq p_1$ and (iii), (iv) are consequences of Lemma \ref{weyl} and Propositions \ref{mot}$-$\ref{ba} and \ref{bon}. The lower bounds of the cases $p_1,p_2\leq 2$ and (ii) follow from the relation $x_n\leq c_n$. We refer to \cite{KiSi} for the asymptotic behaviour of $x_n({\rm App}: S^t_{p_1,p_1}B(\Omega) \to L_{p_2}(\Omega))$. The upper bound of (ii) is derived from the inequality $c_n\leq a_n$ and part (ii) in Theorem \ref{known}. To finish we consider the chain of continuous embeddings
\beqq
S^t_{p_1,p_1}B(\Omega) \hookrightarrow L_2(\Omega) \hookrightarrow L_{p_2}(\Omega)
\eeqq  
if $p_2\leq 2$. Now property (c) of the $s$-numbers together with part (iii) in Theorem \ref{main} implies the estimate from above in the case  $p_1,p_2<2$ and $t>\frac{1}{2}$. The proof is complete. \qed

\vskip 3mm
\noindent
{\bf Proof of Proposition \ref{conj}.} The proof follows from Lemma \ref{weyl}  in combination with Proposition \ref{nam}. \qed
\vskip 3mm

The following proposition will be used to prove the results in Theorems \ref{ex} and \ref{ex-3}, see \cite{Tr70,KiSi}.
\begin{proposition}\label{inter}
Let $0<\theta<1$.
Let $X,Y, Y_0,Y_1$ be Banach spaces. 
Further we assume  $Y_0\cap Y_1\hookrightarrow  Y$ and the existence of  a positive constant $C$ such that
\beqq
\| y|Y\| \leq C \, \| y|Y_0\|^{1-\theta}\|y|Y_1\|^{\theta}\qquad  \text{for all}\quad  y\in Y_0\cap Y_1.
\eeqq
Then, if $
T\in  \mathcal{L}(X,Y_0) \cap  \mathcal{L}(X,Y_1) \cap  \cl (X,Y)
$
we obtain 
\beqq
c_{n+m-1}(T:~X\to Y)\le C\,  c_n^{1-\theta}(T:~ X\to Y_0)\, c_m^{\theta}(T:~X\to Y_1)
\eeqq
for all $n,m \in \N$. 
\end{proposition}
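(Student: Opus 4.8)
The plan is to prove this by the standard ``intersection of near-optimal subspaces'' argument, which is exactly the mechanism behind the sub-additivity axiom (b) of $s$-numbers. First I would fix $\varepsilon>0$. By the definition of Gelfand numbers, $c_n(T:X\to Y_0)=\inf\{\|T J_M^X\|:\mathrm{codim}(M)<n\}$, so I can pick a subspace $M_0\subset X$ with $\mathrm{codim}(M_0)\le n-1$ and $\|T J_{M_0}^X:M_0\to Y_0\|\le c_n(T:X\to Y_0)+\varepsilon$; likewise a subspace $M_1\subset X$ with $\mathrm{codim}(M_1)\le m-1$ and $\|T J_{M_1}^X:M_1\to Y_1\|\le c_m(T:X\to Y_1)+\varepsilon$.

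Next I would set $M:=M_0\cap M_1$. Since $X/M$ injects into $X/M_0\oplus X/M_1$, we get $\mathrm{codim}(M)\le \mathrm{codim}(M_0)+\mathrm{codim}(M_1)\le n+m-2<n+m-1$, so $M$ is admissible in the definition of $c_{n+m-1}(T:X\to Y)$. For every $x\in M$ with $\|x|X\|\le 1$, the element $Tx$ lies in $Y_0\cap Y_1$ (since $T$ maps $X$ into both $Y_0$ and $Y_1$), and by the choice of $M_0,M_1$ one has $\|Tx|Y_0\|\le c_n(T:X\to Y_0)+\varepsilon$ and $\|Tx|Y_1\|\le c_m(T:X\to Y_1)+\varepsilon$. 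Applying the assumed log-convexity inequality on $Y_0\cap Y_1$,
\[
\|Tx|Y\|\le C\,\|Tx|Y_0\|^{1-\theta}\,\|Tx|Y_1\|^{\theta}\le C\big(c_n(T:X\to Y_0)+\varepsilon\big)^{1-\theta}\big(c_m(T:X\to Y_1)+\varepsilon\big)^{\theta}.
\]
Taking the supremum over all such $x$ yields $\|T J_M^X:M\to Y\|\le C\big(c_n(T:X\to Y_0)+\varepsilon\big)^{1-\theta}\big(c_m(T:X\to Y_1)+\varepsilon\big)^{\theta}$, and since $\mathrm{codim}(M)<n+m-1$ this gives $c_{n+m-1}(T:X\to Y)\le C\big(c_n(T:X\to Y_0)+\varepsilon\big)^{1-\theta}\big(c_m(T:X\to Y_1)+\varepsilon\big)^{\theta}$; letting $\varepsilon\downarrow 0$ finishes the proof.

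There is no genuine obstacle here: the proof is purely formal once the subspace $M$ is produced. The only points needing a little care are the codimension bookkeeping $\mathrm{codim}(M_0\cap M_1)\le \mathrm{codim}(M_0)+\mathrm{codim}(M_1)$, the observation that $Tx$ really belongs to $Y_0\cap Y_1$ so that the interpolation inequality can be applied pointwise along $\{Tx:x\in M,\ \|x|X\|\le 1\}$, and the $\varepsilon$-perturbation which compensates for the infima defining $c_n$ and $c_m$ not necessarily being attained.
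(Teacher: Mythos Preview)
Your argument is correct and is exactly the standard proof of this interpolation inequality for Gelfand numbers. Note that the paper does not actually supply a proof of Proposition~\ref{inter}; it merely states the result and refers to \cite{Tr70,KiSi}. Your subspace-intersection argument is precisely the one found in those references, so there is nothing to compare.
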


\noindent
{\bf Proof of Theorem \ref{ex}.} {\it Step 1.} We prove (i). \\{\it Substep 1.1.} Estimate from above. Since $p>1$ there exist $ \varepsilon>0$ such that $1+\epsilon <p$. We have the following embeddings
\beqq
S^t_{p,p}B(\Omega )\hookrightarrow L_{1+\varepsilon}(\Omega) \hookrightarrow L_1(\Omega).
\eeqq
Property (c) of the $s$-numbers together with the result in Theorem \ref{main} (i) implies the estimate from above.\\
{\it Substep 1.2.} Estimate from below. Since $p>1$ there exist $p_0,p_1$ and $0<\theta<1$ such that $$1<p_0<p_1<\min(p,2) \qquad \text{and}\qquad \frac{1}{p_0}=\frac{\Theta}{1}+\frac{1-\Theta}{p_1}.$$
This yields 
\[
\| f|L_{p_0}(\Omega)\| \leq \|f|L_1(\Omega)\|^{1-\theta}\, \|f|L_{p_1}(\Omega)\|^{\theta}\qquad   \text{for all}\quad  f\in L_{p_1}(\Omega).
 \]
 Next we employ the interpolation property of the Gelfand numbers, see Proposition \ref{inter}, and obtain
 \beqq
 c_{2n-1}  ({\rm App} :  S^t_{p,p}B(\Omega)\to L_{p_0}(\Omega))\lesssim  
   c_n^{1-\theta}({\rm App}: S^t_{p,p}B(\Omega)\to L_1(\Omega))\, \, c_n^{\theta}({\rm App}:S^t_{p,p}B(\Omega)\to L_{p_1}(\Omega)).
 \eeqq
 Now, the estimate from below follows from part (i) in Theorem \ref{main}. \\
 {\it Step 2.} Proof of (ii). The lower estimate follow from the inequality $x_n\leq c_n$. We refer again to \cite[Theorem 3.4]{KiSi} for asymptotic behaviour of $x_n({\rm App}:\ S^t_{p,p}B(\Omega) \to L_{\infty}(\Omega))$. Let $1<p<2$ and $t>1$. Then there always exists some $r>\frac{1}{2}$ such that $t-r>\frac{1}{2}$.
 We consider the commutative diagram
 \tikzset{node distance=4cm, auto}
 \begin{center}
 \begin{tikzpicture}
  \node (H) {$S^{t}_{p,p}B(\Omega)$};
  \node (L) [right of =H] {$L_\infty(\Omega)$};
  \node (L2) [right of =H, below of =H, node distance = 2cm ] {$S^{r}_{2,2}B(\Omega)$};
  \draw[->] (H) to node {${\rm App}$} (L);
  \draw[->] (H) to node [swap] {${\rm App}_1$} (L2);
  \draw[->] (L2) to node [swap] {${\rm App}_2$} (L);
  \end{tikzpicture}
 \end{center}
The multiplicativity of the Gelfand numbers, see \cite[Section 11.9]{Pi-80}, yields 
\be \label{inf}
c_{2n-1} ({\rm App})\le c_n ({\rm App}_1)\, c_n ({\rm App}_2)\asymp c_n({\rm App}_1) \cdot n^{-r+\frac{1}{2}}(\log n)^{(d-1)r}\, ,
\ee
see Proposition \ref{known1}. By the lifting property  of mixed Besov spaces, see Theorem \ref{lift}, and Theorem \ref{main} (iii) we have
\beqq
c_n({\rm App}_1)\asymp c_n({\rm App}:S^{t-r}_{p,p}B(\Omega)\to L_2(\Omega) )\asymp n^{-t+r}(\log n)^{(d-1)(t-r-\frac{1}{p}+\frac{1}{2})}.
\eeqq
Putting this into \eqref{inf} we get the desired upper estimate. The proof is complete.
\qed
\vskip 3mm
\noindent
{\bf Proof of Theorem \ref{ex-3}.} {\it Step 1.} Proof of (i). Recall that 
\beqq
a_n({\rm App}:\ S^t_{p}H(\Omega) \to L_{1}(\Omega))\asymp n^{-t}(\log n)^{(d-1)t}, \qquad n \ge2
\eeqq was obtained by Romanyuk \cite{Rom9}. From this and the inequality $c_n\leq a_n$ we get the upper bound for Gelfand numbers. By similar arguments as in the Substep 1.2 of the proof of Theorem \ref{ex} and the result in part (i) in Theorem \ref{gel-so} we obtain the estimate from below as well. \\
{\it Step 2.} Proof of (ii). Since the target space is $L_{\infty}(\Omega)$, it is enough to prove (ii) for Gelfand numbers. The lower estimate is a consequence of the inequality $x_n\leq c_n$ and the result in \cite[Theorem 2.6]{Ki16}. Concerning the estimate from above we consider the diagram 
 \tikzset{node distance=4cm, auto}
 \begin{center}
 \begin{tikzpicture}
  \node (H) {$S^{t}_{p}H(\Omega)$};
  \node (L) [right of =H] {$L_\infty(\Omega)$};
  \node (L2) [right of =H, below of =H, node distance = 2cm ] {$S^{r}_{2}H(\Omega)$};
  \draw[->] (H) to node {${\rm App}$} (L);
  \draw[->] (H) to node [swap] {${\rm App}_1$} (L2);
  \draw[->] (L2) to node [swap] {${\rm App}_2$} (L);
  \end{tikzpicture}
 \end{center}
 Now similar arguments as in Step 2 of the proof of Theorem \ref{ex} yields the desired result. This finishes the proof. \qed
\vskip 3mm
\noindent
{\bf Acknowledgements:} The author would like to thank Professor Winfried Sickel for many valuable discussions and comments about this work.

\end{document}